\documentclass[reqno]{amsart}

\usepackage{amsmath,amssymb,enumerate}
\usepackage{mathscinet}

\usepackage{graphicx}

\makeatletter
\@namedef{subjclassname@2020}{%
  \textup{2020} Mathematics Subject Classification}
\makeatother

\newtheorem{Theorem}{Theorem}[section]
\newtheorem{Lemma}[Theorem]{Lemma}
\newtheorem{Proposition}[Theorem]{Proposition}
\newtheorem{Corollary}[Theorem]{Corollary}

\theoremstyle{definition}
\newtheorem{Remark}[Theorem]{Remark}
\newtheorem{Example}[Theorem]{Example}
\newtheorem{Definition}[Theorem]{Definition}

\newcommand{\thref}[1]{Theorem \ref{#1}}
\newcommand{\prref}[1]{Proposition \ref{#1}}
\newcommand{\leref}[1]{Lemma \ref{#1}}
\newcommand{\reref}[1]{Remark \ref{#1}}
\newcommand{\coref}[1]{Corollary \ref{#1}}
\newcommand{\seref}[1]{Section \ref{#1}}

\numberwithin{equation}{section}

\newcommand{\Fset}{\mathbb{F}}
\newcommand{\Zset}{\mathbb{Z}}
\newcommand{\Rset}{{\mathbb{R}}}
\newcommand{\Cset}{{\mathbb{C}}}
\newcommand{\Tset}{{\mathbb{T}}}
\newcommand{\Nset}{\mathbb{N}}

\newcommand{\Dset}{{\mathbb{D}}}
\newcommand{\oDset}{\overline{{\mathbb{D}}}}

\newcommand{\gb}{\bar{g}}
\newcommand{\hb}{\bar{h}}
\newcommand{\zb}{\bar{z}}
\newcommand{\wb}{\bar{w}}

\newcommand{\cB}{{\mathcal B}}
\newcommand{\cBt}{\tilde{{\mathcal B}}}

\newcommand{\cI}{{\mathcal I}}
\newcommand{\cZ}{{\mathcal Z}}

\newcommand{\fR}{{\mathfrak R}}

\newcommand{\sP}{\mathsf{P}}
\newcommand{\sPt}{\tilde{\mathsf{P}}}

\newcommand{\fM}{{\mathfrak M}}

\newcommand{\cL}{{\mathcal L}}
\newcommand{\cM}{{\mathcal M}}
\newcommand{\cF}{{\mathcal F}}
\newcommand{\cK}{{\mathcal K}}
\newcommand{\cR}{{\mathcal R}}
\newcommand{\cS}{{\mathcal S}}
\newcommand{\cT}{{\mathcal T}}
\newcommand{\cTt}{\tilde{{\mathcal T}}}

\newcommand{\cFt}{\tilde{{\mathcal F}}}
\newcommand{\cKt}{\tilde{{\mathcal K}}}

\newcommand{\Pt}{\tilde{P}}
\newcommand{\pt}{\tilde{p}}
\newcommand{\ph}{\hat{p}}
\newcommand{\qt}{\tilde{q}}

\newcommand{\nt}{\tilde{n}}
\newcommand{\mt}{\tilde{m}}

\newcommand{\qr}{\overleftarrow{q}}

\newcommand{\prs}[1]{{\overleftarrow{p_{#1}}}}
\newcommand{\psb}[1]{{\overline{p_{#1}}}}

\newcommand{\At}{{\tilde{A}}}                                    
\newcommand{\Bt}{{\tilde{B}}}

\newcommand{\Ran}{\mathrm{range}}
\newcommand{\Sym}{\mathrm{Sym}}
\newcommand{\im}{\mathrm{Im}}

\newcommand{\Proj}{\mathrm{Proj}}

\DeclareMathOperator*{\res}{Res}

\def\deg{\mathop{\rm deg}}
\def\Span{\mathrm{span}}
\makeatletter
\def\adots{\mathinner {\mkern 1mu\raise 0\p@ \vbox {\kern 7\p@ \hbox {.}}\mkern 2mu \raise 3\p@ \hbox {.}\mkern 2mu\raise 6\p@ \hbox {.}\mkern 1mu}}
\makeatother

\title{Bernstein--Szeg\H{o} measures in the plane}

\author[J.~Geronimo]{Jeffrey~S.~Geronimo}
\address{JG, School of Mathematics, Georgia Institute of Technology, Atlanta, GA 30332--0160, USA}
\email{geronimo@math.gatech.edu}

\author[P.~Iliev]{Plamen~Iliev$^*$}
\address{PI, School of Mathematics, Georgia Institute of Technology, Atlanta, GA 30332--0160, USA}
\email{iliev@math.gatech.edu}
\thanks{$^*$PI gratefully acknowledges support by a grant from the Simons Foundation and a CRM-Simons Professorship at the Centre de Recherches Math\'ematiques, Universit\'e de Montr\'eal.}

\begin{document}

\begin{abstract}
We define a class of Bernstein--Szeg\H{o} measures on $\Rset^2$ and we establish their spectral properties, providing a natural extension of the one-dimensional theory. We also derive conditions involving finitely many moments, which are new in the two-dimensional setting, and which completely characterize these measures. A key ingredient in the theory on the real line stems from the fact that a measure $\mu$ on $\Rset$ determines a unique sequence of orthonormal polynomials which gives a simple formula for $d\mu/dx $ in the Bernstein--Szeg\H{o} family. Since there is no canonical way to introduce orthonormal polynomials in the plane, our extension is based on a new identity which connects a Fej\'er--Riesz factorization of the weight to a polynomial depending on three variables associated with $\mu$. Using recent results in the bivariate trigonometric Fej\'er--Riesz factorization problem, we define a nontrivial two-dimensional extension of the Szeg\H{o} mapping which provides explicit orthonormal bases of the spaces associated with  Bernstein--Szeg\H{o} measures on $\Rset^2$. An important part of the paper is devoted to a self-contained development of the Bernstein--Szeg\H{o} theory for matrix-valued functionals. The proofs combine techniques from real analysis, complex analysis and algebra.
\end{abstract}

\date{March 20, 2026}
\keywords{Bernstein--Szeg\H{o} measures, bivariate polynomials, Fej\'er--Riesz factorization, matrix polynomials, stability}
\subjclass[2020]{42C05, 47A57, 30E05}

\maketitle

\tableofcontents

\section{Introduction}\label{se1}
\subsection{Bernstein--Szeg\H{o} measures on $\Rset$}\label{ss1.1}
An important class of measures on $\Rset$ introduced by Bernstein and Szeg\H{o} \cite{Szego} consists of the measures of the form
\begin{equation}\label{1.1}
d\mu=\frac{2}{\pi}\frac{\sqrt{1-x^2}}{Q(x)}\chi_{(-1,1)}(x)\,dx, 
\end{equation}
where $Q(x)$ is a polynomial nonvanishing on $(-1,1)$, with at most simple zeros at $x=\pm 1$ and $\chi_{J}$ denotes the characteristic function of a set $J$. Recall that if $\{p_k(x)\}_{k=0}^{\infty}$ are orthonormal polynomials with respect to a measure $\mu$ on the real line, then  the multiplication by $x$ can be represented by a three-term operator
\begin{equation}\label{1.2}
a_{k+1}p_{k+1}(x)+b_kp_k(x)+a_kp_{k-1}(x)=xp_k(x).
\end{equation}
Suppose that $Q(x)$ is a polynomial of degree at most $2n$ for some positive integer $n$, and let $q(z)$ denote the stable Fej\'er--Riesz factor of $Q(x)$, i.e. $q(z)$ is the unique polynomial with real coefficients and no zeros in the closed unit disk, except possibly for simple zeros at $z=\pm 1$, such that 
\begin{equation}\label{1.3}
Q(x)= q(z)  q(1/z), \quad \text{ where }\quad x=\frac{1}{2}\left(z+\frac{1}{z}\right),
\end{equation}
normalized so that $q(0)>0$. We can define orthonormal polynomials with respect to $\mu$ in \eqref{1.1} by 
\begin{equation}\label{1.4}
p_k(x)=\frac{z^{k+1}q(1/z)-z^{-k-1}q(z)}{z-1/z} \quad \text{for }k\ge n.
\end{equation}
The last equation implies that 
\begin{equation}\label{1.5}
a_{k+1}=\frac{1}{2}\quad\text{ and }\quad b_k=0 \qquad\text{ for } k\geq n.
\end{equation}
Conversely, suppose now that \eqref{1.5} holds. We can restrict our attention to $p_0(x)$, \dots, $p_{n}(x)$, or more generally, we can consider a positive linear functional $\cL$ defined on the space $\Rset_{2n}[x]$ of polynomials of degree at most $2n$, with  orthonormal polynomials $p_0(x),\dots,p_{n}(x)$. Since every positive linear functional $\cL$ on $\Rset_{2n}[x]$ can be extended to a positive linear functional on the space of all polynomials using \eqref{1.5}, the natural question one might ask is what conditions on $\cL$ guarantee the existence of a polynomial $Q(x)\in \Rset_{2n}[x]$ such that
\begin{equation}\label{1.6}
\cL(f)=\frac{2}{\pi}\int_{-1}^{1}\frac{f(x)\sqrt{1-x^2}}{Q(x)}\,dx \quad \text{ for all }f\in \Rset_{2n}[x].
\end{equation}
Using \eqref{1.4} as hint, we can give a simple answer as follows: for a positive linear functional  $\cL:\Rset_{2n}[x]\to\Rset$ with orthonormal polynomials $\{p_k(x)\}_{k=0}^{n}$ equation~\eqref{1.6} holds for some  $Q(x)\in \Rset_{2n}[x]$ if and only if 
\begin{equation}\label{1.7}
q(z)=z^{n}\left(p_{n}(x)-2za_{n}p_{n-1}(x)\right)\neq 0\qquad \text{ for }\qquad z\in (-1,1),
\end{equation}
see \cite{DS,DN,GC,GeI3,GS,VanAssche} and the references therein. Moreover, in this case $Q(x)$ is unique and can be computed using the polynomial $q(z)$ in \eqref{1.7} and \eqref{1.3}. 
These types of measures have appeared in the investigations of moment sequences of measures on subsets of the real line in probability \cite{DTV},  Gaussian quadrature formulas on $[-1,1]$ \cite{BCGM,Peherstorfer}, cubature rules for unitary Jacobi ensembles and symmetric functions \cite{vDE1,vDE2}, asymptotics of extremal errors in approximation theory \cite{LL1,LL2}, and the Bethe Ansatz equations in the spectral analysis of quantum particle models \cite{vD}.

\subsection{Bernstein--Szeg\H{o} measures on $\Rset^2$}\label{ss1.2}
The goal of this article is to address the following questions:
\begin{enumerate}[(I)]
\item Are there bivariate extensions of the Bernstein--Szeg\H{o} weights on the real line \eqref{1.1} which possess spectral properties similar to  \eqref{1.5}?\label{(I)}
\item How do we characterize the measures in (\ref{(I)})?  \label{(II)}
\item Are there analogs of \eqref{1.4}? \label{(III)}
\end{enumerate}
In the remaining part of the introduction, we will go over some of the main results in the paper which answer the questions raised above. We begin by explaining how we define orthonormal polynomials and recurrence coefficients. Unlike the one-variable theory, there is no canonical way to introduce orthonormal polynomials in the plane. Starting with \cite{Jackson}, it is customary to replace the orthonormal polynomial $p_k(x)$ in the one-dimensional setting with the space $\Pi_k[x,y]$ consisting of all polynomials of total degree $k$ which are orthogonal to all polynomials of total degree at most $k-1$. Then, by picking an orthonormal basis of each $\Pi_k$, we replace the three-term recurrence relation with two matrix relations which correspond to the multiplications by $x$ and $y$, respectively. Since $\dim(\Pi_k)=k+1$ the coefficient $a_{k+1}$ in \eqref{1.2} is replaced by rectangular $(k+1)\times (k+2)$ matrices $A_{x,k+1}$ and $A_{y,k+1}$, see \cite{DX,GK,Suetin,Xu} for a detailed account. For instance, for the Chebyshev measure 
\begin{equation}\label{1.8}
d\mu=\frac{4}{\pi^2}\sqrt{1-x^2}\sqrt{1-y^2}\,\chi_{(-1,1)^2}(x,y)\, dx\,dy
\end{equation}
we can choose bases so that the matrices have the block structure $A_{x,k+1}=\frac{1}{2}[I_{k+1}|0]$, $A_{y,k+1}=\frac{1}{2}[0|I_{k+1}]$, $B_{x,k}=0=B_{y,k}$, where $I_{k+1}$ is the identity $(k+1)\times (k+1)$ matrix. However, these formulas do not hold beyond the trivial weight in \eqref{1.8} for any $k$, even if we consider simple examples like products $w_1(x)w_2(y)$ of two Bernstein--Szeg\H{o} weights on $\Rset$.

A different way to analyze two-dimensional measures and orthogonal polynomials was introduced in \cite{DGK} and used in the work \cite{GeWo,GeWo2} to solve the bivariate Fej\'er--Riesz factorization problem. Within the context of $\Rset^2$ this approach was developed in \cite{DGIM} where the spaces $\Pi_k[x,y]$ were replaced by the following 
spaces of orthogonal polynomials with respect to a measure, or more generally a positive linear functional $\cL$
\begin{subequations}\label{1.9}
\begin{align}
&\sP_{k,\ell;\cL}[x,y]=\Rset_{k,\ell}[x,y]\ominus \Rset_{k-1,\ell}[x,y], \label{1.9a} \\
&\sPt_{k,\ell;\cL}[x,y]=\Rset_{k,\ell}[x,y]\ominus \Rset_{k,\ell-1}[x,y]. \label{1.9b}
\end{align}
\end{subequations}
In the above equations $k$ and $\ell$ are nonnegative integers and $\Rset_{k,\ell}[x,y]$ denotes the space of polynomials with real coefficients in $x$ and $y$ of degrees at most $k$ in $x$ and $\ell$ in $y$, see Figure 1 for an illustration.
\begin{figure}[h! ]
\includegraphics[scale=0.35]{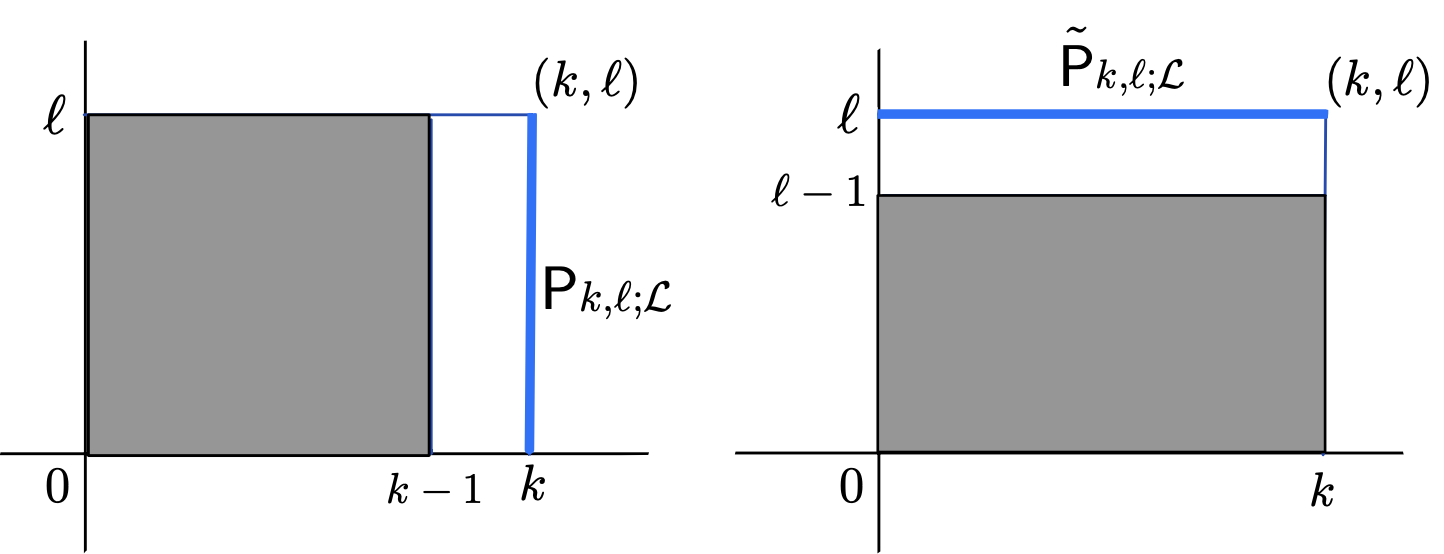}
\caption{$\sP_{k,\ell;\cL}$ and $\sPt_{k,\ell;\cL}$ are orthogonal to $x^{i}y^{j}$ for $(i,j)$ in the shaded regions above.}
\end{figure}

For every $\ell$ we fix a basis 
\begin{subequations}\label{1.10}
\begin{equation}\label{1.10a}
\cB_{\ell} =(\beta_0(y),\beta_1(y),\dots,\beta_{\ell}(y))
\end{equation}
of the space $\Rset_{\ell}[y]$ of polynomials of degree at most $\ell$ in $y$ and we define an orthonormal basis $\{p_{k,\ell}^j(x,y):0\leq j\leq \ell\}$ of the spaces $\sP_{k,\ell;\cL}[x,y]$ for all $k$ by applying the Gram-Schmidt process to the elements in the set $f_{k,\ell}^{0},\dots, f_{k,\ell}^{\ell}$ where
\begin{equation}\label{1.10b}
f_{k,\ell}^{j}=x^k \beta_j(y) -\Proj_{\Rset_{k-1,\ell}[x,y]}(x^k \beta_j(y)),
\end{equation}
\end{subequations}
and $\Proj_{V}$ denotes the orthogonal projection onto the space $V$. We set 
$$ P_{k,\ell}(x,y)=[p_{k,\ell}^0(x,y),p_{k,\ell}^1(x,y),\dots,p_{k,\ell}^{\ell}(x,y)]^t.$$  
Similarly, we fix a basis $\cBt_k$ of the space $\Rset_k[x]$ of polynomials of degree at most $k$ in $x$, which leads to an orthonormal basis $\{\pt_{k,\ell}^j(x,y): 0\leq j\leq k\}$  for $\sPt_{k,\ell;\cL}[x,y]$, and we set 
$$\Pt_{k,\ell}(x,y)=[\pt_{k,\ell}^0(x,y),\pt_{k,\ell}^1(x,y),\dots,\pt_{k,\ell}^k(x,y)]^t.$$
With these notations, it is easy to see that the above vector polynomials satisfy the following recurrence relations
\begin{subequations}\label{1.11}
\begin{align}
& x P_{k,\ell} = A_{k+1,\ell} P_{k+1,\ell} + B_{k,\ell} P_{k,\ell} +A_{k,\ell}^t P_{k-1,\ell}, \label{1.11a}\\
& y \Pt_{k,\ell} = \At_{k,\ell+1} \Pt_{k,\ell+1} + \Bt_{k,\ell} \Pt_{k,\ell} +\At_{k,\ell}^t \Pt_{k,\ell-1}, \label{1.11b}
\end{align}
\end{subequations}
where $A_{k,\ell}$ and $B_{k,\ell}$ are $(\ell+1)\times (\ell+1)$ matrices while  $\At_{k,\ell}$ and $\Bt_{k,\ell}$ are $(k+1)\times (k+1)$ matrices. The vector polynomials $P_{k,\ell}(x,y)$ can be written as 
\begin{equation*}
P_{k,\ell}(x,y)= P^{\ell}_k(x) [\beta_0(y),\beta_1(y),\dots,\beta_{\ell}(y)]^t,
\end{equation*}
where $P^{\ell}_k(x)$ is a polynomial of degree $k$ in $x$ whose coefficients are $(\ell+1)\times (\ell+1)$ matrices, and the highest coefficient is a lower-triangular matrix with positive diagonal entries. The recurrence relation \eqref{1.11a} is equivalent to the three-term relation for the matrix orthogonal polynomials $\{P^{\ell}_k(x)\}_{k\in\Nset_0}$, and we can relate \eqref{1.11b} to the theory of matrix orthogonal polynomials in a similar manner.

The vector polynomials $P_{k,\ell}$ and the matrices $A_{k,\ell}$ and $B_{k,\ell}$ in \eqref{1.11a} depend on the basis $\cB_{\ell}$ in \eqref{1.10a}, but it is easy to see that the condition 
\begin{equation*}
A_{k+1,\ell}=\frac{1}{2}I_{l+1} \quad\text{ and }\quad B_{k,\ell}=0
\end{equation*}
is independent of $\cB_{\ell}$ and the same is true for the coefficients in \eqref{1.11b}. Thus, we can formulate question (\ref{(I)}) precisely by replacing \eqref{1.5} with 
\begin{subequations}\label{1.12}
\begin{align}
&A_{k+1,\ell}=\frac{1}{2}I_{\ell+1},  && B_{k,\ell}=0, && \text{ for all }\quad k \geq n,\quad  \ell\geq m, \label{1.12a} \\
&\tilde{A}_{k,\ell+1}=\frac{1}{2}I_{k+1},  && \tilde{B}_{k,\ell}=0, && \text{ for all }\quad k \geq n,\quad  \ell\geq m, \label{1.12b}
\end{align}
\end{subequations}
for $n$ and $m$ sufficiently large. Moreover, if we consider a product measure $\mu_1(x)\times \mu_2(y)$ on $\Rset^2$, where each $\mu_j$ is absolutely continuous with respect to the Lebesgue measure on $\Rset$, then one can show that \eqref{1.12} holds for $n$ and $m$ sufficiently large if and only if both  $\mu_1(x)$ and $\mu_2(y)$ are Bernstein--Szeg\H{o} measures of the form \eqref{1.1}. It is also easy to see that if we can construct bases for the spaces $\sP_{k,\ell;\cL}[x,y]$ and $\sPt_{k,\ell;\cL}[x,y]$ such that equations \eqref{1.11} and \eqref{1.12} hold, then the bases must be obtained by the process described above.

The first nontrivial example of a measure on $\Rset^2$ for which equations  \eqref{1.12} hold was discovered in \cite[Section 3]{GeI1} as a one-parameter deformation of the Chebyshev measure \eqref{1.8}.
 It was built in a rather roundabout way from an algorithm in \cite{DGIM} which guaranteed that \eqref{1.12a}-\eqref{1.12b} hold for $n=m=1$, and inverse scattering techniques were used to derive the corresponding measure which can be written as follows
 \begin{equation}\label{1.13}
d\mu(x,y)=\frac{4}{\pi^2}\;\frac{\chi_{(-1,1)^2}(x,y)\,\sqrt{1-x^2}\sqrt{1-y^2}}
{\omega(z,w)\omega(1/z,w) \omega(z,1/w)\omega(1/z,1/w)}dx\, dy, 
\end{equation}
where $x=\frac{1}{2}\left(z+\frac{1}{z}\right)$, $y=\frac{1}{2}\left(w+\frac{1}{w}\right)$, 
$$\omega(z,w)=1-czw, \quad \text{ and }c\in (-1,1) \text{ is a free parameter.}$$
This was extended later in \cite[Theorem 4.2]{DGIX} 
where it was shown that equations \eqref{1.12} hold for  $n$ and $m$ sufficiently large if we consider measures of the form \eqref{1.13} with 
$$\omega(z,w)=\prod_{j=1}^{N}(1+c_jzw), \quad \text{ where }c_j\in (-1,1) \text{ are free parameters.}$$
While it seemed natural to believe that \eqref{1.12} must hold for arbitrary polynomials $\omega(z,w)$ nonvanishing when $|z|\leq 1$ and $|w|\leq 1$, the proof in \cite{DGIX} was rather involved, using several technical intermediate steps, see Lemmas 3.7-3.8 and Theorems 3.9-3.11 in \cite{DGIX}, and it was clear that this approach could not be easily extended. 

Our first result settles this conjecture for {\em arbitrary polynomials} $\omega(z,w)$.
\begin{Theorem}[Bernstein--Szeg\H{o} measures on $\Rset^2$]\label{th1.1}
Suppose that $n_0,n_1,m_0,m_1$ are nonnegative integers and\\ 
$\bullet$ $\omega(z,w)\in\Rset_{n_0,m_0}[z,w]$ is nonzero for $|z|<1$, $|w|<1$,\\
$\bullet$ $q_1(x)\in \Rset_{2n_1}[x]$ is positive for $x\in (-1,1)$, \\
$\bullet$ $q_2(y)\in \Rset_{2m_1}[y]$ is positive for $y\in (-1,1)$,\\
and 
\begin{equation}\label{1.14}
Q(x,y)=q_1(x)q_2(y)\omega(z,w)\omega(1/z,w)\omega(z,1/w)\omega(1/z,1/w)
\end{equation}
is such that 
\begin{equation}\label{1.15}
\iint_{(-1,1)^2}\frac{\sqrt{1-x^2}\sqrt{1-y^2}}{Q(x,y)}\,dx\,dy<\infty.
\end{equation}
Then the recurrence coefficients for the measure
\begin{equation}\label{1.16}
d\mu(x,y)=\frac{4}{\pi^2}\frac{\chi_{(-1,1)^2}(x,y)\,\sqrt{1-x^2}\sqrt{1-y^2}}{q_1(x)q_2(y)\omega(z,w)\omega(1/z,w)\omega(z,1/w)\omega(1/z,1/w)}\,dx\,dy, 
\end{equation}
satisfy equations \eqref{1.12} with $n=n_0+n_1$ and $m=m_0+m_1$.
\end{Theorem}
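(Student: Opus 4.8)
The plan is to reduce \thref{th1.1} to a one–variable \emph{matrix} Bernstein--Szeg\H{o} problem and to run the reduction one variable at a time, invoking the matrix Bernstein--Szeg\H{o} theory developed later in the paper.

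\emph{Reduction to a matrix weight.} Fix $l\ge 0$ and let $W_l(x)$ be the $(l+1)\times(l+1)$ matrix weight on $(-1,1)$ with entries
$$(W_l(x))_{ij}=\frac{4}{\pi^2}\int_{-1}^{1}\beta_i(y)\,\beta_j(y)\,\frac{\sqrt{1-x^2}\,\sqrt{1-y^2}}{Q(x,y)}\,dy,\qquad 0\le i,j\le l.$$
Because $P_{k,l}(x,y)=P^l_k(x)\,[\beta_0(y),\dots,\beta_l(y)]^t$ with $P^l_k$ a matrix polynomial, the discussion following \eqref{1.11} shows that $\{P^l_k\}_{k\ge0}$ are precisely the orthonormal matrix orthogonal polynomials of $W_l$ and that \eqref{1.11a} is their three–term recurrence; hence \eqref{1.12a} amounts to the assertion that, for every $l\ge m$, the matrix recurrence of $W_l$ stabilises at $n$, i.e.\ $A_{k+1,l}=\tfrac12 I_{l+1}$ and $B_{k,l}=0$ for $k\ge n$. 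Since $Q$ in \eqref{1.14} is symmetric under exchanging the roles of $(x,z,n_0,n_1,q_1)$ and $(y,w,m_0,m_1,q_2)$, the same argument applied to the matrix weights $\widetilde{W}_k(y)$ obtained by integrating out $x$ will give \eqref{1.12b}; so it is enough to treat \eqref{1.12a}.

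\emph{Identifying the order.} With $x=\tfrac12(z+1/z)$ and $y=\tfrac12(w+1/w)$ we have $Q(x,y)=q_1(x)\,q_2(y)\,S(x,y)$, where
$$S(x,y)=\omega(z,w)\,\omega(1/z,w)\,\omega(z,1/w)\,\omega(1/z,1/w)$$
is a genuine polynomial in $x$ and $y$ (it is invariant under $z\mapsto1/z$ and under $w\mapsto1/w$), nonnegative on $(-1,1)^2$, of degree $2n_0$ in $x$ and $2m_0$ in $y$. Hence $W_l(x)=\dfrac{\sqrt{1-x^2}}{q_1(x)}\,G_l(x)$, where $G_l(x)=\bigl(\langle\beta_i,\beta_j\rangle_{\rho_x}\bigr)_{i,j=0}^{l}$ is the Gram matrix of $\cB_l$ for the one–variable measure $d\rho_x(y)=\tfrac{4}{\pi^2}\,\tfrac{\sqrt{1-y^2}}{q_2(y)\,S(x,y)}\,dy$ on $(-1,1)$. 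For each $x\in(-1,1)$ outside the finite exceptional set where $S(x,\cdot)$ degenerates on the boundary, the denominator $q_2(y)\,S(x,y)$ is a polynomial in $y$ of degree $\le 2m$, positive on $(-1,1)$ and with at most simple zeros at $y=\pm1$ by \eqref{1.15}; so $\rho_x$ is an honest one–variable Bernstein--Szeg\H{o} measure of order $m$, and for degrees $\ge m$ its orthonormal polynomials are given by the analog of \eqref{1.4} built from the stable Fej\'er--Riesz factor of $q_2(y)\,S(x,y)$. Feeding this explicit description of the degree–$l$ slice ($l\ge m$) into the $y$–integration that defines $G_l$, one shows that $W_l$ lies in the matrix Bernstein--Szeg\H{o} class of order $n=n_0+n_1$: the factor $q_1$ contributes $n_1$ exactly as in the scalar theory, while $\omega$ contributes $n_0$ because $S$ has $x$–degree $2n_0$. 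Carrying out this last point rigorously — tracking the $z$– and $w$–degrees simultaneously in the contour (residue) evaluation of the $y$–integral, and using the recent bivariate Fej\'er--Riesz results to produce a stable–in–$y$ factorisation of $q_2(y)\,S(x,y)$ whose factors have controlled $x$–degree — is the technical heart of the argument and, I expect, the main obstacle: the mere nonnegativity of $S$ on $\Tset^2$ is not enough, one needs the finer factorisation properties of the special product $\omega(z,w)\,\omega(1/z,w)\,\omega(z,1/w)\,\omega(1/z,1/w)$.

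\emph{Conclusion.} Granting that each $W_l$ with $l\ge m$ is matrix Bernstein--Szeg\H{o} of order $n$, the matrix Bernstein--Szeg\H{o} theory of the paper gives $A_{k+1,l}=\tfrac12 I_{l+1}$ and $B_{k,l}=0$ for $k\ge n$, which is \eqref{1.12a}; the mirror argument for the weights $\widetilde{W}_k(y)$ — now of order $m=m_0+m_1$, because $Q$ has $y$–degree $2m$ — yields \eqref{1.12b}, completing the proof.
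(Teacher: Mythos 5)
Your reduction --- integrate out $y$ to get an $(l+1)\times(l+1)$ matrix weight $W_l(x)$ on $(-1,1)$, identify the $P^l_k$ as its matrix orthonormal polynomials, and invoke the matrix Bernstein--Szeg\H{o} criterion of Section~\ref{se3} --- is exactly the paper's strategy (see the proof of \thref{th4.1}(I)), so the overall route is right. But the proposal stops precisely at the step that carries all the weight: you write that ``one shows that $W_l$ lies in the matrix Bernstein--Szeg\H{o} class of order $n$'' and then flag this as ``the technical heart $\dots$ and, I expect, the main obstacle,'' without actually establishing it. What has to be proved is that the inverse Gram matrix $G_l(x)^{-1}$ (equivalently, the inverse Hankel moment matrix $H_m(x)^{-1}$ of the slice measure $\rho_x$) is an honest matrix \emph{polynomial} in $x$ of degree $\le 2n$; without that, \thref{th3.6} does not apply and no conclusion about the recurrence coefficients follows.

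Two concrete points. First, the worry that one ``needs the recent bivariate Fej\'er--Riesz results to produce a stable--in--$y$ factorisation of $q_2(y)S(x,y)$'' is misplaced: such a factorisation is handed to you by the hypotheses. Setting $p(x,w)=\tilde q_2(w)\,\omega(z,w)\,\omega(1/z,w)$ gives $q_2(y)S(x,y)=p(x,w)p(x,1/w)$ with $p(x,w)\in\Rset_{n_0,2m}[x,w]$, and the stability of $p(x,\cdot)$ on $\oDset$ for $x\in(-1,1)$ is what \leref{le4.10} provides. This is why the paper first proves the one-sided statement \thref{th4.1} and then derives \thref{th1.1}=\thref{th4.11} by symmetry; you would do well to phrase your argument the same way, since the two-sided weight never needs to be factored directly. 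Second, the missing ingredient that actually makes $H_m(x)^{-1}$ polynomial in $x$ of the claimed degree is the one-variable reproducing-kernel identity \eqref{2.9} (Christoffel--Darboux built from the Szeg\H{o}-type formula \eqref{2.2}) together with \leref{le2.4}, packaged as \coref{co2.5}: the entries of $H_m(x)^{-1}$ are homogeneous quadratics in the $w$-coefficients of $\sqrt{q_1(x)}\,p(x,w)$, which are polynomials in $x$ of degree $\le n_1+n_0=n$. There is also a measure-theoretic point your proposal glosses over --- the finiteness hypothesis \eqref{1.15} must be upgraded from ``for a.e.\ $x$'' to ``for every $x\in(-1,1)$'' before the slice argument makes sense pointwise; the paper does this at the start of the proof of \thref{th4.1}(I) via Tonelli and Fatou. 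Supplying these pieces turns your outline into the paper's proof; as written, the proposal names the gap but does not close it.
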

If $\omega(z,w)=1$ we obtain the trivial examples corresponding to product measures discussed earlier. Note that we only require $\omega(z,w)$ to be nonzero inside the bi-disk $|z|<1$, $|w|<1$ as long as the moments are finite. Thus, the theorem applies also to examples like $\omega(z,w)=2+z+w$ when $q_1(x)$ and $q_2(y)$ are positive on $[-1,1]$.

If $\qt_2(w)$ denotes the stable Fej\'er--Riesz factor of $q_2(y)$ and if we set $p(x,w)=\qt_2(w)\omega(z,w)\omega(1/z,w)$, then $Q(x,y)$ in \eqref{1.14} can be written as
\begin{equation}\label{1.17}
Q(x,y)=q_1(x)p(x,w)p(x,1/w).
\end{equation}
Our proof of \thref{th1.1} actually establishes a stronger statement, namely that the factorization in \eqref{1.17} implies \eqref{1.12a}. In order to state this result, we will also replace the measures above with positive linear functionals which will allow us to describe the characteristic properties in terms of finitely many moments. If 
$$\cL: \Rset_{2n,2m}[x,y]\to\Rset$$
is a positive linear functional with vector orthonormal polynomials $\{P_{k,\ell}(x,y)\}_{k\leq n,\ell\leq m}$ for the spaces $\sP_{k,\ell;\cL}[x,y]$  in \eqref{1.9a} we define a polynomial $\cF_{k,\ell}(\cL;\cdot)$ of three variables as follows
\begin{align}\label{1.18}
\cF_{k,\ell}(\cL;x,y,y_1)=&P_{k,\ell}(x,y)^t P_{k,\ell}(x,y_1)-4xP_{k,\ell}(x,y)^tA^t_{k,\ell}P_{k-1,\ell}(x,y_1)\nonumber\\
&\qquad\qquad\qquad+4P_{k-1,\ell}(x,y)^tA_{k,\ell}A^t_{k,\ell}P_{k-1,\ell}(x,y_1).
\end{align}
Note that the right-hand side of \eqref{1.18} is independent of the way we construct the orthonormal bases of $\sP_{k,\ell;\cL}[x,y]$ and therefore it defines a canonical polynomial associated with the positive linear functional $\cL$. 

With the polynomials $q_1(x)$ and $p(x,w)$ appearing in the factorization \eqref{1.17} we construct another polynomial $\cK_{m}(q_1,p;\cdot)$ depending on three variables by
\begin{align}\label{1.19}
&\cK_{m}(q_1,p;x,y,y_1) = \nonumber\\
&\frac{q_1(x)}{(w-1/w)(w_1-1/w_1)}\Big(\frac{(ww_1)^{-m-1}p(x,w)p(x,w_1)-(ww_1)^{m+2}p(x,1/w)p(x,1/w_1)}{1-ww_1} \nonumber\\
&\qquad+\frac{(w_1/w)^{m+1}w_1p(x,w)p(x,1/w_1)-(w/w_1)^{m+1}wp(x,1/w)p(x,w_1)}{w-w_1}\Big).
\end{align}
It is easy to see that the right-hand side of \eqref{1.19} belongs to the space $\Rset[x,y,y_1]$ hence $\cK_{m}(q_1,p;\cdot)$ is  a well-defined polynomial associated with $q_1(x)$ and $p(x,w)$.
With the above notations, we can formulate one of the main results in the paper concerning the one-sided factorization in \eqref{1.17} as follows.
\begin{Theorem}\label{th1.2}
Suppose that for some $n_1\leq n$, the polynomials $q_1(x)\in\Rset_{2n_1}[x]$ and $p(x,w)\in \Rset_{n-n_1,2m}[x,w]$ are such that
\begin{enumerate}[\rm(i)] 
\item $q_1(x)>0$ and $p(x,w)\neq 0$ when $x\in(-1,1)$, $|w|<1$,
\item $\iint_{(-1,1)^2}\frac{\sqrt{1-x^2}\sqrt{1-y^2}}{q_1(x)p(x,w)p(x,1/w)}\,dx\,dy<\infty.$
\end{enumerate}
Then  the recurrence coefficients of the positive linear functional
\begin{equation}\label{1.20}
\cL(f)=\frac{4}{\pi^2}\iint_{(-1,1)^2}f(x,y)\frac{\sqrt{1-x^2}\sqrt{1-y^2}}{q_1(x)p(x,w)p(x,1/w)}\,dx\,dy, 
\end{equation}
satisfy \eqref{1.12a} and 
\begin{align}\label{1.21}
\cF_{n,m}(\cL;\cdot)=\cK_{m}(q_1,p;\cdot).
\end{align}
\end{Theorem}
One can show that the nonvanishing conditions in \thref{th1.1} combined with the convergence \eqref{1.15} imply the conditions in \thref{th1.2} and therefore  \thref{th1.1} can be deduced as a corollary from \thref{th1.2}. Another key ingredient of \thref{th1.2} is the identity \eqref{1.21} which is crucial in the two-dimensional setting and provides a necessary condition for the opposite direction. Condition \eqref{1.21}  is missing in the one-dimensional case thanks to the  Fej\'er--Riesz lemma \cite{Fejer,Riesz}, but  the extension of this classical result to several variables leads to a series of difficult questions, related to the famous $17^{\mathrm{th}}$ problem of Hilbert which lie at the interface of analysis, algebra and algebraic geometry. We prove that the identity \eqref{1.21} combined with nonvanishing conditions similar to \eqref{1.7} {\em completely characterizes} the measures in \eqref{1.20}. The precise statement is as follows.

\begin{Theorem}\label{th1.3}
Let $\cL: \Rset_{2n,2m}[x,y]\to\Rset$ be a positive linear functional, and suppose that for some $n_1\leq n$ there exist polynomials  $q_1(x)\in\Rset_{2n_1}[x]$, $p(x,w)\in \Rset_{n-n_1,2m}[x,w]$ satisfying  \eqref{1.21}. If 
\begin{enumerate}[\rm(a)]
\item $p(x,w)\neq 0$ for $x\in (-1,1)$, $w\in(-1,1)$, and 
\item $\hat{\Psi}^m_{n}(z)=z^{n}(P^m_{n}(x)-2zA_{n,m}^tP^m_{n-1}(x))$ is invertible for $z\in(-1,1)$,
\end{enumerate}
then $q_1(x)$ and $p(x,w)$ satisfy conditions {\rm{(i)-(ii)}} in \thref{th1.2} and \eqref{1.20} holds for all $f\in\Rset_{2n,2m}[x,y]$.
\end{Theorem}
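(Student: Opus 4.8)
\emph{Strategy.} The plan is to reduce to the one-dimensional Bernstein--Szeg\H{o} theory for matrix-valued functionals developed in the paper. Via the fixed basis $\cB_m$ of $\Rset_m[y]$ one views $\cL$ as a matrix-valued positive definite linear functional $\cL^m$ on $(m+1)\times(m+1)$-matrix polynomials in $x$ of degree at most $2n$, whose matrix orthonormal polynomials are $P^m_0(x),\dots,P^m_n(x)$ and whose recurrence matrices are the $A_{k,m},B_{k,m}$ of \eqref{1.11a}; in these terms $\hat\Psi^m_n(z)$ is precisely the matrix analogue of the polynomial in \eqref{1.7}. Substituting $y=\tfrac12(w+1/w)$, $y_1=\tfrac12(w_1+1/w_1)$ and $P_{k,m}(x,y)=P^m_k(x)[\beta_0(y),\dots,\beta_m(y)]^t$, the identity \eqref{1.19} becomes a matrix identity: its right-hand side is a ``level-$n$ Bernstein--Szeg\H{o} kernel'' $M_\cL(x)$ of $\cL^m$, built from $P^m_n,P^m_{n-1},A_{n,m}$ (equivalently from $\hat\Psi^m_n$), while its left-hand side is $q(x)$ times a Bezoutian-type kernel of the polynomial $p(x,\cdot)$ in the variables $w,w_1$. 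Since the measure in \eqref{1.18} only exists once (i)--(ii) of \thref{th1.2} are known, the argument has two stages.

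\emph{Stage 1: conditions (i)--(ii) of \thref{th1.2}.} I would first extract sign and zero information from \eqref{1.19} alone. Taking $w_1=\bar w$ with $|w|=1$ (so $y_1=y\in[-1,1]$) and $x\in(-1,1)$, the right-hand side of \eqref{1.19} equals
\[
\big|P_{n,m}(x,y)-2xA_{n,m}^tP_{n-1,m}(x,y)\big|^2+4(1-x^2)\big|A_{n,m}^tP_{n-1,m}(x,y)\big|^2\ \ge\ 0,
\]
and for fixed $x\in(-1,1)$ this cannot vanish identically in $y$ (that would force $P^m_{n-1}(x)=0$, incompatible with $\det A_{n,m}\neq0$ and the nondegeneracy of $\cL$). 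Feeding this positivity back through \eqref{1.19} forces $q(x)>0$ on $(-1,1)$; with $q>0$ in hand, \eqref{1.19} says the Bezoutian kernel of $p(x,\cdot)$ is positive semidefinite for each $x\in(-1,1)$, and the classical Schur--Cohn stability criterion upgrades hypothesis (a) to $p(x,w)\neq0$ for $x\in(-1,1)$, $|w|<1$. Finiteness of the integral in (ii) is then obtained from the boundedness of the (polynomial) right-hand side of \eqref{1.19} on $[-1,1]^2$, which controls $1/\big(q(x)p(x,w)p(x,1/w)\big)$ near the boundary where $\sqrt{1-x^2}\sqrt{1-y^2}$ degenerates; the only delicate cases, corner behaviour and a possible double zero of $q$ at $x=\pm1$, are excluded because $\cL$ is a finite functional.

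\emph{Stage 2: $\cL=\cM$ on $\Rset_{2n,2m}[x,y]$.} Now $\cM(f)=\tfrac{4}{\pi^2}\iint_{(-1,1)^2}f\,\tfrac{\sqrt{1-x^2}\sqrt{1-y^2}}{q(x)p(x,w)p(x,1/w)}\,dx\,dy$ is well defined, and \thref{th1.2} applies to it: its recurrence matrices satisfy \eqref{1.12a} and its orthonormal polynomials satisfy \eqref{1.19} with the \emph{same} $q,p$, so $M_\cM(x)=M_\cL(x)$. On the other hand, condition (b) says $\hat\Psi^m_n(z)$ is invertible on $(-1,1)$, so the one-dimensional matrix Bernstein--Szeg\H{o} converse theorem shows that $\cL^m$ is itself a matrix Bernstein--Szeg\H{o} functional (its recurrence matrices stabilizing to $\tfrac12I,0$), as is $\cM^m$. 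Such a functional is determined by the single matrix polynomial $\hat\Psi^m_n$; since the level-$n$ kernel $M(x)$ together with the invertibility in (b) recovers $\hat\Psi^m_n$ up to normalization, $M_\cL=M_\cM$ yields $\hat\Psi^{m,\cL}_n=\hat\Psi^{m,\cM}_n$, hence $\cL^m=\cM^m$ on matrix polynomials of degree $\le2n$, i.e. $\cL=\cM$ on $\Rset_{2n,2m}[x,y]$.

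\emph{Main obstacle.} The crux is the last deduction in Stage 2: that the level-$n$ Bernstein--Szeg\H{o} kernel together with (b) genuinely pins down $\hat\Psi^m_n$ (equivalently the pair $P^m_n,P^m_{n-1}$ and the matrix $A_{n,m}$). This requires the uniqueness of the stable matrix Fej\'er--Riesz factorization and careful bookkeeping of the left/right normalization conventions for matrix orthogonal polynomials, and in particular the conversion of the invertibility of $\hat\Psi^m_n$ on the real interval $(-1,1)$ (i.e. for $x\notin[-1,1]$) into the invertibility on the unit circle needed to interpret $\hat\Psi^m_n$ as the stable factor of an honest matrix weight, for which the positivity established in Stage 1 (notably the stability of $p(x,\cdot)$) must be invoked once more.
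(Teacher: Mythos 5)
Your reduction to the matrix-valued Bernstein--Szeg\H{o} theory via $\cL_{m+1}$ is the right framework, but Stage~1 as written has a genuine gap, and Stage~2 takes a different route from the paper's.

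The critical gap is the finiteness of the integral in condition~(ii) of \thref{th1.2}. Boundedness of the polynomial right-hand side of \eqref{1.19} at $y_1=y\in[-1,1]$ (which is the diagonal of the reproducing kernel $[1,\dots,y^m]\,H_m(x)^{-1}\,[1,\dots,y^m]^t$) does not control the quantity you need, because it bounds $H_m(x)^{-1}$ and not the moment matrix $H_m(x)$ itself: replacing the one-dimensional measure $d\mu_x$ by $\alpha^{-1}d\mu_x$ rescales the reproducing kernel by $\alpha$ while rescaling the total mass $h_0(x)$ by $\alpha^{-1}$, so a uniformly bounded kernel is perfectly compatible with $\int h_0(x)\,dx=\infty$. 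The paper gets (ii) by a different route that uses \emph{both} hypotheses in tandem: condition~(b) together with \thref{th3.6} gives the integral representation \eqref{4.13} of the matrix functional $\cL_{m+1}$; condition~(a) together with the scalar case of \thref{th3.6} gives the representation \eqref{4.15} of each one-dimensional functional $\cL^x$ attached to the Hankel matrix $H_m(x)$; combining these expresses every $\cL(x^ky^j)$ as the iterated integral \eqref{4.17}, and Tonelli at $k=j=0$ gives $\iint = \tfrac{\pi^2}{4}\cL(1)<\infty$. Since your Stage~2 presupposes that the integral-defined functional exists, this gap is load-bearing. Two smaller imprecisions in Stage~1: nonnegativity of the right-hand side at $y_1=y$ does not by itself determine the sign of $q(x)$; the paper compares coefficients of $y^m y_1^m$ to obtain $q(x)\,p_{m,m}^2(x)=\big(H_m(x)^{-1}\big)_{m+1,m+1}>0$, which simultaneously fixes $q(x)>0$ and $p_{m,m}(x)\neq0$. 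And ``classical Schur--Cohn'' is too loose a substitute for the actual mechanism, which is \thref{th3.6} with $l=1$ (with $\hat\Psi$ replaced by $\sqrt{q(x)}\,p(x,w)$, as justified by \reref{re2.7} and \eqref{2.16}) together with \leref{le3.2}.

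Your Stage~2 is a legitimate alternative to the paper's argument once Stage~1 is secured. Rather than identifying all moments $\cL(x^ky^j)$ directly via \eqref{4.17} as the paper does, you introduce the Bernstein--Szeg\H{o} functional given by the integral, observe that both it and $\cL$ produce the same level-$n$ kernel $W_N(x)=\hat\Psi^m_n(1/z)^t\hat\Psi^m_n(z)$, and conclude from uniqueness of the stable matrix Fej\'er--Riesz factor and \thref{th3.6} that the two matrix functionals $\cL_{m+1}$ coincide on $\Rset_{2n}[x]$. This is sound, though somewhat more indirect than the paper's direct moment computation, and it uses the same underlying inputs (\thref{th3.6}, Fej\'er--Riesz uniqueness) in a slightly different order.
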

Given a functional $\cL$, there is a constructive way to check whether \eqref{1.21} holds, and thus the above theorem can be used in practice to reconstruct the polynomials  $p(x,w)$ and $q_1(x)$ from the moments. We illustrate this with several examples at the end of the paper. Moreover, one can extend \thref{th1.3} and characterize the Bernstein--Szeg\H{o} measures \eqref{1.16}. The construction of $\omega(z,w)$ from a linear functional $\cL$ combines algebraic and analytical arguments together with an extension of the well-known criterion \cite{Strintzis} for stability of bivariate polynomials on $\oDset^2$ stated in \leref{le4.13}. We refer the reader to Subsection~\ref{ss4.2} for details and \thref{th4.12} for the characterization of the Bernstein--Szeg\H{o} measures \eqref{1.16}.

The results described above provide a complete answer to questions~(\ref{(I)})--(\ref{(II)}) for the bivariate extension of the Bernstein--Szeg\H{o} theory. We now turn to the last question. Let us start with the functional in \eqref{1.20} with weight possessing one-sided factorization. Using the one-dimensional theory discussed briefly in Section~\ref{ss1.1}, one can easily see that if $\{U^{q_1}_j(x)\}_{j\in\Nset_0}$ are the orthonormal polynomials with respect to the measure $\frac{2}{\pi}\frac{\sqrt{1-x^2}}{q_1(x)}\chi_{(-1,1)}(x)dx$ on $\Rset$, and if we set 
\begin{equation*}
p_M(y;x)=\frac{w^{M+1} p (x,1/w)- w^{-M-1} p (x,w)}{w-1/w},
\end{equation*}
then the polynomials 
\begin{equation}\label{1.22}
\{p_M(y;x)U^{q_1}_j(x)\}_{j=0}^{N-n_0}, \quad \text{ where }n_0=n-n_1,
\end{equation}
are orthonormal elements in the space $\sPt_{N,M}[x,y]$ when $N\geq n$, $M\geq m$. The natural question now is whether we can extend this set to a complete orthonormal basis of the space $\sPt_{N,M}[x,y]$. Note that we have already used the stable Fej\'er--Riesz factor for the inverse of the weight, and we need $n_0$ new quantities to construct the complement. It turns out that the spaces introduced in \cite{GeI2} for the Bernstein--Szeg\H{o} measures on the bi-circle can be used to answer this question. If $p(x,w)\neq0$ for $x\in[-1,1]$ and $|w|\leq 1$, we can look at the spaces  $\sPt_{k,\ell;\Tset^2}[z,w]$ of orthogonal polynomials for the measure 
\begin{equation}\label{1.23}
\frac{1}{(2\pi)^2}\, \frac{|dz|\,|dw|}{p(x,w)p(x,1/w)} \quad \text{ on }\quad\Tset^2=\{(z,w)\in\Cset^2: |z|=|w|=1\}
\end{equation}
which are defined just like the ones for $\Rset^2$ in \eqref{1.9b}. However, in order to connect the two sets of polynomials, we need to choose $k=2n_0-1$ and $\ell=2M+1$ which means that on the bi-circle we get a space whose dimension is twice the dimension we need to complete the orthonormal set of real polynomials. A key fact discovered in  \cite{GeI2} and further analyzed in \cite{GeIK} is that the space $\sPt_{2n_0-1,2M+1;\Tset^2}[z,w]$ has a subtle decomposition as a direct sum of two subspaces $\sPt_{2n_0-1,2M+1;\Tset^2}^{1}[z,w]$ and $\sPt_{2n_0-1,2M+1;\Tset^2}^{2}[z,w]$ which possess extra orthogonality conditions and ``nice" spectral properties. Moreover, if we start with the measure \eqref{1.23}, these two subspaces are simply related to each other by a reflection in $z$, and thus we can focus on one of them, say $\sPt_{2n_0-1,2M+1;\Tset^2}^{1}[z,w]$ having exactly the dimension $n_0$ we need. The extra orthogonality and the spectral properties of this space allow us to construct a Szeg\H{o} mapping $\cS: \sPt_{2n_0-1,2M+1;\Tset^2}^{1}[z,w]  \to  \sPt_{N,M}[x,y]$ which completes the set in \eqref{1.22} to an orthonormal basis. Moreover, these complement spaces satisfy a Chebyshev relation as $N$ changes, which is a novel to 2D.
But there is an interesting twist in this story: while $\cS$ is a linear isomorphism of $\sPt_{2n_0-1,2M+1;\Tset^2}^{1}[z,w]$ onto the complement of the set in \eqref{1.22}, this map is not an isometry, and we need to modify the inner product on $\sPt_{2n_0-1,2M+1;\Tset^2}^{1}[z,w]$ in order to get an orthonormal basis of polynomials in the plane. For the Bernstein--Szeg\H{o} measures \eqref{1.16} many simplifications take place and we can construct explicit orthonormal bases on both sides in terms of fixed orthonormal polynomials on the bi-circle contained in the finite-dimensional space $\Rset_{n_0,m_0}[z,w]$ and the multiplications by $x$ and $y$ are represented by Chebyshev relations. The precise statements for the measures in \eqref{1.20} and \eqref{1.16}  are given in Theorems~\ref{th6.2} and \ref{th6.6}, respectively and the Chebyshev relations are discussed in \coref{co6.4} and \reref{re6.7}(ii).

It would be interesting to see if the absolutely continuous measures of the form given in \eqref{1.16} are completely characterized by \eqref{1.12}. A more difficult question is to drop the absolute continuity condition and to describe all possible measures on $\Rset^2$ for which equations \eqref{1.12} hold. This is well-known in the one-variable case, see for instance \cite{DS,GeI3} and the references therein. A similar spectral characterization for Bernstein--Szeg\H{o} measures on the bi-circle can be found in \cite{GeI2}. The problem in $\Rset^2$ is significantly more difficult since the weight of the absolutely continuous part of the measure is no longer simply related to a single orthonormal polynomial. Moreover, if we drop the absolute continuity, the measures can have singular continuous and discrete components. Interesting examples which illustrate this beyond the case of product measures are presented in the last section.

The paper is organized as follows. The next section provides a short introduction to the one-dimensional Bernstein--Szeg\H{o} theory for the measures in \eqref{1.1}, emphasizing the interplay between orthogonal polynomials, Hankel matrices and reproducing kernels which are important here. Most of the results in this section are known, but we have included enough details to make the presentation self-contained and to stress some subtle points which play a crucial role in the proofs. One of the new results stated in \prref{pr2.9} provides an algebraic description of the moments of  Bernstein--Szeg\H{o} measures on $\Rset$. In \seref{se3} we develop the Bernstein--Szeg\H{o} theory for matrix-valued measures (see also \cite{Kozhan,Kozhane}). This section is of independent interest and can be used as a self-contained account for the matrix theory. In \seref{se4} we prove Theorems~\ref{th1.1}-\ref{th1.3} above together with the characterization of the Bernstein--Szeg\H{o} measures \eqref{1.16}.  In \seref{se5}, we have collected some of the constructions and the results from \cite{GeI2,GeIK} for measures on the bi-circle together with several new facts needed for the measures in the plane. In \seref{se6} we define a bivariate extension of the Szeg\H{o} mapping and we use it to construct bases of orthogonal polynomials for the spaces associated with Bernstein--Szeg\H{o} measures on $\Rset^2$. \seref{se7} contains two explicit examples illustrating different constructions in the paper and the importance of some of the conditions imposed in the main theorems.

\subsection{Notations}\label{ss1.3} 
Throughout the paper, we use the following notations and conventions.\\
$\bullet$  $\Rset$ and $\Cset$ will denote the fields of real and complex numbers, respectively. \\
$\bullet$  $\Zset$ and $\Nset_0$ will denote the ring of integers and the set of nonnegative integers.\\
$\bullet$  $\Dset=\{z\in\Cset: |z|<1\}$, $\oDset=\{z\in\Cset: |z|\leq 1\}$ and $\Tset=\{z\in\Cset: |z|=1\}$ will denote the open disk, the closed disk and the unit circle in $\Cset$, respectively.\\
$\bullet$  We will use $x$ and $y$ to denote real variables, which will be related to the complex variables $z$ and $w$ via the formulas 
$$x=\frac{1}{2}\left(z+\frac{1}{z}\right)\quad \text{ and }\quad y=\frac{1}{2}\left(w+\frac{1}{w}\right).$$
$\bullet$  For a ring $\Fset$, we denote by $\Fset[a_1,\dots,a_k]$ the ring of polynomial in $a_1,\dots,a_k$ with coefficients in $\Fset$, and by $\Fset_{m_1,\dots,m_k}[a_1,\dots,a_k]$ the space of polynomials such that $a_j$ has degree at most $m_j$ for every $j=1,\dots,k$.\\
$\bullet$ $\Rset^{l\times l}$ denotes the ring of all $l\times l$ matrices with real coefficients, and $\Sym(\Rset^{l\times l})$ is the space of all symmetric matrices. In particular, for $N\in\Nset_0$ we denote by $\Rset^{l\times l}_N[x]$ the space of polynomials of degree at most $N$ in $x$ with coefficients in $\Rset^{l\times l}$. \\
$\bullet$ To unify the notation, we develop the matrix and the bivariate Bernstein--Szeg\H{o} theory working with positive linear functionals rather than measures.

\section{One-dimensional Bernstein--Szeg\H{o} weights, Hankel matrices and reproducing kernels}\label{se2}
In this section we review the connection between Bernstein--Szeg\H{o} measures on $\Rset$, inverses of Hankel matrices, reproducing kernels and the Christoffel-Darboux formula.

Recall that $y=\frac{w+1/w}{2}$ and consider measures of the form
\begin{equation}\label{2.1}
d\mu =\frac{2}{\pi}\frac{\sqrt{1-y^2}}{|q(w)|^2}\chi_{(-1,1)}(y)dy
\end{equation}
where $q(w)=q_mw^m+\cdots + q_0$ is a polynomial of degree $m$ with real coefficients 
nonvanishing for $w\in\oDset$, except for possible simple zeros at $w=\pm 1$.
Set 
\begin{equation}\label{2.2}
p_k(y)=\frac{w^{k+1}q(1/w)-w^{-k-1}q(w)}{w-1/w},
\end{equation}
and note that  
$$w^{k+1}q(1/w)\in\Span \{w^{k+1},\dots, w^{k-m+1}\} .$$  
This shows that if $k\ge m-1$ then $p_k(y)$ has degree $k$, while if $k<m-1$ the degree of  $p_k(y)$  is $\le \max(k,m-k-2)$. 
In both cases, since $q(w)$ has real coefficients, it follows that $p_k(y)$ has real coefficients which depend linearly on the coefficients of $q(w)$. Using the method in Szeg\H{o} \cite[Theorem 2.6, (2.6.3)]{Szego} we have
\begin{Proposition}\label{pr2.1}
If $2k+1\ge m$ then $p_k(y)$ is a polynomial of degree $k$ orthonormal with respect to $\mu$.
\end{Proposition}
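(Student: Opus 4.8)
The plan is to follow Szeg\H{o}'s method: transport the problem to the unit circle via the Joukowski map $y=\tfrac12(w+1/w)$ and reduce each orthogonality relation to the vanishing of a single residue. Throughout write $N_k(w)=w^{k+1}q(1/w)-w^{-k-1}q(w)$, so that $p_k(y)=N_k(w)/(w-1/w)$; the relation $N_k(1/w)=-N_k(w)$, already exploited before the statement, shows that $p_k$ is a genuine polynomial in $y$ with real coefficients. To pin down its degree I would expand $N_k(w)$ in powers of $w$: under the hypothesis $2k+1\ge m$ one has $k+1>m-k-1$, so the top term of $N_k$ is $q(0)\,w^{k+1}$, and since $w\sim 2y$ as $y\to\infty$ this gives $p_k(y)=2^{k}q(0)\,y^{k}+(\text{lower order terms})$, with $q(0)\ne 0$ because $q$ is nonvanishing on $\oDset$. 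Hence $\deg p_k=k$.

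For the orthogonality and normalization I would first record that, using $1-y^{2}=-\tfrac14(w-1/w)^{2}$, the evenness in $\theta$ of $g(\cos\theta)\sin^{2}\theta$, and $|q(w)|^{2}=q(w)q(1/w)$ on $\Tset$, the substitution $w=e^{i\theta}$ yields
\begin{equation*}
\frac{2}{\pi}\int_{-1}^{1}\frac{g(y)\sqrt{1-y^{2}}}{|q(w)|^{2}}\,dy
= -\frac{1}{4\pi i}\oint_{\Tset}\frac{g\bigl(\tfrac{w+1/w}{2}\bigr)\,(w-1/w)^{2}}{q(w)\,q(1/w)}\,\frac{dw}{w}
\end{equation*}
for every polynomial $g$ (first assume $q\ne 0$ on all of $\oDset$). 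Taking $g=p_k\cdot h$, using $p_k(y)(w-1/w)=N_k(w)$ and the factorization $N_k(w)/(q(w)q(1/w))=w^{k+1}/q(w)-w^{-k-1}/q(1/w)$, in which $1/q(w)$ is holomorphic on $\oDset$ while $1/q(1/w)$ is holomorphic on $\{|w|\ge 1\}\cup\{\infty\}$, turns every relation into residue bookkeeping. For $h(y)=y^{l}$ with $0\le l\le k-1$ the contour integrand splits into a first piece $w^{k}(w-1/w)(w+1/w)^{l}/(2^{l}q(w))$, which has no negative powers of $w$ (since $k-(l+1)\ge 0$) and is therefore holomorphic on $\oDset$, and a second piece $-w^{-k-2}(w-1/w)(w+1/w)^{l}/(2^{l}q(1/w))$, which is holomorphic outside $\oDset$ and $O(w^{-2})$ at $\infty$; both integrate to $0$, so $p_k\perp\Rset_{k-1}[y]$. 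For $h=p_k$ one has $N_k(w)^{2}/(q(w)q(1/w))=w^{2k+2}q(1/w)/q(w)-2+w^{-2k-2}q(w)/q(1/w)$, so the integrand reads $w^{2k+1-m}\tilde q(w)/q(w)-2/w+w^{m-2k-3}q(w)/\tilde q(w)$ with $\tilde q(w)=w^{m}q(1/w)$; the middle term contributes $1$, the first is holomorphic on $\oDset$ \emph{precisely because $q$ is stable and $2k+1\ge m$}, and the third is holomorphic outside $\oDset$ with $O(w^{-2})$ decay (all zeros of $\tilde q$ lie in $\Dset$), so the two outer terms vanish. Thus $\int p_k^{2}\,d\mu=1$, and $2k+1\ge m$ has been used exactly where it is needed (once for the degree, once to keep $w^{2k+1-m}\tilde q/q$ analytic inside the disk).

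The one genuine technical point — and, I expect, the main obstacle — is the case in which $q$ has a simple zero at $w=1$ and/or $w=-1$: then $1/q$ and $1/q(1/w)$ have poles on $\Tset$ and the contour manipulations above are not literally valid. I would handle this by approximation, replacing $q(w)$ by $q_{\rho}(w)=q(\rho w)$ and letting $\rho\uparrow 1$. Each $q_{\rho}$ has real coefficients, degree $m$, satisfies $q_{\rho}(0)=q(0)$, is nonvanishing on $\oDset$ (its zeros are $\rho^{-1}$ times those of $q$, hence of modulus $>1$), and its coefficients converge to those of $q$; since the map $q\mapsto p_k$ in \eqref{2.2} is linear in the coefficients of $q$, the polynomials $p_k^{(\rho)}$ built from $q_{\rho}$ converge coefficientwise to $p_k$ (confirming $\deg p_k=k$ in all cases), while the densities $\sqrt{1-y^{2}}/|q_{\rho}(w)|^{2}$ are bounded on $(-1,1)$ uniformly in $\rho$ — the simple zeros at $\pm1$ being cancelled by $\sqrt{1-y^{2}}$ — and converge a.e.\ to $\sqrt{1-y^{2}}/|q(w)|^{2}$. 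Dominated convergence then transfers the identities $\frac{2}{\pi}\int_{-1}^{1}\frac{p_j^{(\rho)}(y)p_k^{(\rho)}(y)\sqrt{1-y^{2}}}{|q_{\rho}(w)|^{2}}\,dy=\delta_{jk}$ to the limit, which finishes the argument.
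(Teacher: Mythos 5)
Your core computation — passing to the unit circle and reducing each orthogonality relation to a residue — is correct and is essentially the paper's argument. The bookkeeping differs slightly: the paper folds the term $w^{-k-1}/q(1/w)$ into $w^{k+1}/q(w)$ via the substitution $\varphi\mapsto-\varphi$, arriving at the single contour integral $-\tfrac{1}{2\pi i}\oint_{\Tset}f(y)w^k(w-1/w)q(w)^{-1}\,dw$ and applying the Cauchy--Goursat theorem, whereas you keep the two pieces separate and discard the second by analyticity outside $\oDset$ together with $O(w^{-2})$ decay at infinity. These are equivalent, and your degree computation is also fine.

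The genuine flaw is in your handling of simple zeros of $q$ at $w=\pm1$ by approximation. The claimed domination fails: the density $\sqrt{1-y^{2}}/|q_\rho(w)|^{2}$ is \emph{not} uniformly bounded on $(-1,1)$, and is in fact unbounded already at $\rho=1$. If $q$ has a simple zero at $w=1$, then near $y=1$ one has $\sqrt{1-y^{2}}\sim|\varphi|$ while $|q(e^{i\varphi})|^{2}\sim C\varphi^{2}$, so the density behaves like $(1-y)^{-1/2}$ (integrable but unbounded), and $\sup_{(-1,1)}\sqrt{1-y^{2}}/|q_\rho(w)|^{2}$ grows like $(1-\rho)^{-1}$ as $\rho\uparrow1$. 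One can salvage the dominated convergence by changing variables to $\varphi$: the relevant integrand there is $\sin^{2}\varphi/|q_\rho(e^{i\varphi})|^{2}$, which \emph{is} uniformly bounded. But the whole detour is unnecessary: in each contour integrand the simple poles at $w=\pm1$ coming from $1/q(w)$ (or $1/q(1/w)$) are cancelled by simple zeros already present in the numerator — from the factor $w-1/w=(w-1)(w+1)/w$ in the orthogonality computation, and from $\tilde q(w)=w^{m}q(1/w)$ (which inherits the zeros of $q$ at $\pm1$) in the normalization — so all the integrands extend holomorphically across $w=\pm1$ and your residue arguments go through verbatim. This is exactly what the paper does: it invokes Cauchy--Goursat directly, noting that the possible singularities on $\Tset$ are removable.
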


\begin{proof}
Set $w=e^{i\varphi}$ and note that for every polynomial $f(y)$ we have
\begin{align*}
&\int f(y)p_k(y)d\mu(y)=\int_{0}^{\pi}f(y)\left(w^{k+1}q(1/w)-w^{-k-1}q(w)\right)
\frac{\sin\varphi}{\pi i |q(w)|^2}\,d\varphi\\
&\qquad=\frac{1}{\pi i}\int_{-\pi}^{\pi}\frac{f(y)w^{k+1}}{q(w)}\sin\varphi \,d\varphi 
=-\frac{1}{2\pi i}\oint_{\Tset}\frac{f(y)w^{k}}{q(w)}\left(w-\frac{1}{w}\right) \,dw.
\end{align*}
Since $q(w)$ is nonzero for $w\in\oDset$ except possibly for simple zeros at $w=\pm 1$, the Cauchy--Goursat  theorem shows the last integral is $0$ when $f(y)$ is a 
polynomial of degree less than $k$. When $f(y)=p_k(y)$ we find 
\begin{align*}
\int p_k^2(y)d\mu(y)&=-\frac{1}{2\pi i}\oint_{\Tset}
\frac{w^{k+1}q(1/w)-w^{-k-1}q(w)}{q(w)}w^{k} \,dw\\
&=-\frac{1}{2\pi i}\oint_{\Tset} \left(\frac{w^{2k+1}q(1/w)}{q(w)}-\frac{1}{w}\right)dw=\frac{1}{2\pi i} \oint_{\Tset} \frac{dw}{w}=1,
\end{align*}
completing the proof.
\end{proof}

\begin{Remark}\label{re2.2}
Note that if $m=2k+2$, the comments before \prref{pr2.1} and the proof above show that $p_k(y)$ defined in \eqref{2.2} is a polynomial of degree $k$, which is orthogonal, but not orthonormal with respect to $\mu$, since
\begin{equation}\label{2.3}
\int p_k^2(y)d\mu(y)=\frac{q_0-q_m}{q_0}.
\end{equation}
\end{Remark}

Let $h_j =\int y^jd\mu$ denote the moments with respect to the measure in \eqref{2.1}, and for $k\in\Nset_0$ let $H_k$ be the Hankel matrix
\begin{equation}\label{2.4}
H_k=
\left[
\begin{matrix}
h_{0}& h_{1} & \adots & h_{k}\\
h_{1} & h_{2}& \adots & \\[4pt]
\adots & \adots & & \adots \\[4pt]
h_{k} &  & \adots & h_{2k}
\end{matrix}
\right].
\end{equation}
Recall that if $\{p_k^o(y)\}_{k=0}^{\infty}$ are orthonormal polynomials with respect to $\mu$, then for $k\in\Nset_0$ the reproducing kernel $K_k(y,y_1)$ is defined as follows
\begin{equation}\label{2.5}
K_k(y,y_1)=p^o_0 (y)p^o_0 (y_1)+\cdots + p^o_k(y)p^o_k(y_1).
\end{equation}
On one hand, it is easy to see that
\begin{equation}\label{2.6}
K_k(y,y_1)=[1,y,\dots ,y^{k}]\,H_k^{-1}
\left[\begin{matrix} 1 , y_1 , \dots ,y_1^{k}\end{matrix}\right]^t.
\end{equation}
Indeed, the function $K_k(y,y_1)$ defined in \eqref{2.6} satisfies 
\begin{equation}\label{2.7}
\int K_k(y,y_1)y^sd\mu(y)=y_1^s,
\end{equation}
for every $s=0,1,\dots, k$, hence it must coincide with the reproducing kernel in \eqref{2.5}.
On the other hand, if we set 
$$p^o_k(y)=\sum^k_{j=0}\alpha_{k j}y^j,$$
then by the Christoffel-Darboux formula \cite[formula (3.2.3)]{Szego} we have
\begin{align}
K_k(y,y_1)=\frac{\alpha_{kk}}{\alpha_{k+1\, k+1}}
\frac{p^o_{k+1}(y)p^o_k(y_1)-p^o_k(y)p^o_{k+1}(y_1)}{y-y_1} .\label{2.8}
\end{align}
As an immediate corollary of \prref{pr2.1} and \reref{re2.2} we obtain the following explicit formula for the reproducing kernel.
\begin{Lemma}\label{le2.3} If $2k+2\ge m$, then the reproducing kernel $K_k(y,y_1)$ of the measure $\mu$ in \eqref{2.1}
is given by the formula
\begin{equation}\label{2.9}
K_k(y,y_1)=\frac{p_{k+1}(y)p_k(y_1)-p_k(y)p_{k+1}(y_1)}{2(y-y_1)},
\end{equation}
where $p_j(y)$ are the polynomials defined in \eqref{2.2}.
\end{Lemma}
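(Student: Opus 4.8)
The plan is to read \eqref{2.9} directly off the Christoffel--Darboux formula \eqref{2.8}, using \prref{pr2.1} and \reref{re2.2} to identify which of the $p_j$ from \eqref{2.2} are the orthonormal polynomials and to evaluate the ratio $\alpha_{kk}/\alpha_{k+1\,k+1}$ of their leading coefficients. The one computation needed is the leading coefficient of $p_j(y)$ itself. Letting $|w|\to\infty$ in \eqref{2.2}, one has $q(1/w)\to q_0=q(0)\neq0$ while $w^{-j-1}q(w)=O(w^{m-j-1})$, so when $m\le 2j+1$ the numerator behaves like $q_0w^{j+1}$ and the denominator like $w$, giving $p_j(y)\sim q_0w^{j}$; since $y=\tfrac12(w+1/w)\sim\tfrac12w$, this says $p_j$ has degree $j$ with leading coefficient $2^jq_0$. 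In the borderline case $m=2j+2$ the term $w^{-j-1}q(w)$ also contributes at order $w^{j+1}$, with coefficient $-q_m$, so there the leading coefficient of $p_j$ is $2^j(q_0-q_m)$.

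With this recorded I would split into two cases. If $2k+1\ge m$, then \prref{pr2.1}, applied to $k$ and (since $2k+3\ge m$ as well) to $k+1$, gives $p_k=p^o_k$ and $p_{k+1}=p^o_{k+1}$, so $\alpha_{kk}/\alpha_{k+1\,k+1}=2^kq_0/(2^{k+1}q_0)=\tfrac12$ and \eqref{2.8} is exactly \eqref{2.9}. Under the hypothesis $2k+2\ge m$ the only remaining possibility is $m=2k+2$ (equivalently $k=m/2-1$). There $p_{k+1}$ is still orthonormal (as $2k+3\ge m$), but by \reref{re2.2} the orthonormal polynomial is $p^o_k=p_k\big/\sqrt{(q_0-q_m)/q_0}$, with leading coefficient $\alpha_{kk}=2^k(q_0-q_m)\big/\sqrt{(q_0-q_m)/q_0}=2^k\sqrt{q_0(q_0-q_m)}$. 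Substituting $p^o_k$, $p^o_{k+1}=p_{k+1}$, and the leading coefficients $\alpha_{k+1\,k+1}=2^{k+1}q_0$ and $\alpha_{kk}$ above into \eqref{2.8}, I expect the scalar prefactor to collapse, $\dfrac{\alpha_{kk}}{\alpha_{k+1\,k+1}}\cdot\dfrac{1}{\sqrt{(q_0-q_m)/q_0}}=\dfrac{2^k\sqrt{q_0(q_0-q_m)}}{2^{k+1}q_0}\cdot\dfrac{\sqrt{q_0}}{\sqrt{q_0-q_m}}=\dfrac12$, so that \eqref{2.9} holds again. One should also record that $q_0-q_m>0$, so that the square roots make sense; this is immediate from \eqref{2.3} together with $\int p_k^2\,d\mu>0$ (the latter because $p_k\not\equiv0$ and $\mu$ is a nontrivial positive measure).

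The only delicate point is the exceptional value $m=2k+2$: there one has to carry the normalization constant from \reref{re2.2} through \eqref{2.8} and check that the $q_0$'s and $(q_0-q_m)$'s cancel so the universal $\tfrac12$ survives. If that bookkeeping is unappealing, there is a uniform alternative that bypasses \reref{re2.2}. A one-line substitution in \eqref{2.2} shows $p_{j+1}(y)+p_{j-1}(y)=2y\,p_j(y)$ for every $j\ge1$, and this recurrence alone forces the right-hand side $\tilde K_k(y,y_1)$ of \eqref{2.9} to satisfy $\tilde K_k(y,y_1)-\tilde K_{k-1}(y,y_1)=p_k(y)p_k(y_1)$. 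Combining this telescoping identity with the case $2k+1\ge m$ already settled and the elementary relation $K_k(y,y_1)=K_{k+1}(y,y_1)-p^o_{k+1}(y)p^o_{k+1}(y_1)=\tilde K_{k+1}(y,y_1)-p_{k+1}(y)p_{k+1}(y_1)$ then recovers the borderline case with no normalization constants at all.
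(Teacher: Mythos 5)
Your argument is correct and follows the same route as the paper: split on $2k+1\ge m$ versus $m=2k+2$, identify the orthonormal polynomials via Proposition~\ref{pr2.1} and Remark~\ref{re2.2}, and evaluate the leading-coefficient ratio in the Christoffel--Darboux formula~\eqref{2.8}; you merely supply the leading-coefficient computation (via $|w|\to\infty$ in~\eqref{2.2}) that the paper states without derivation, arriving at the same $\alpha_{kk}=2^k\sqrt{q_0(q_0-q_m)}$, $\alpha_{k+1\,k+1}=2^{k+1}q_0$ in the borderline case. Your closing alternative---deriving the recurrence $p_{j+1}+p_{j-1}=2y\,p_j$ directly from~\eqref{2.2}, deducing the telescoping identity $\tilde K_k-\tilde K_{k-1}=p_k(y)p_k(y_1)$, and then recovering the $m=2k+2$ case from the already-settled $K_{k+1}=\tilde K_{k+1}$---is a clean way to avoid carrying the normalization constant from Remark~\ref{re2.2} through the computation, and is a legitimate simplification the paper does not mention.
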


\begin{proof}
If $2k+1\geq m$, then we can take $p^o_j(y)=p_j(y)$ for $j=k$ and $j=k+1$ by \prref{pr2.1}. Equation~\eqref{2.2} implies that
$\alpha_{jj}=2^{j}q_0$, which combined with \eqref{2.8} establishes \eqref{2.9}.

If $2k+2=m$, then we can take $p^o_k(y)=\sqrt{q_0/(q_0-q_m)}p_k(y)$ by \reref{re2.2} and $p^o_{k+1}(y)=p_{k+1}(y)$ by \prref{pr2.1}. This shows that $\alpha_{kk}=2^{k}\sqrt{q_0(q_0-q_m)}$, $\alpha_{k+1\, k+1}=2^{k+1}q_0$ and substituting these formulas in \eqref{2.8} we obtain \eqref{2.9}.
\end{proof}

The next lemma tells that the entries of the matrix $H^{-1}_k$ coincide with the coefficients in the expansion of the reproducing kernel $K_k(y,y_1)$ in powers of $y$ and $y_1$.

\begin{Lemma}\label{le2.4} The entries $(H^{-1}_k)_{j \ell}$ of the Hankel matrix \eqref{2.4} can be determined from the reproducing kernel \eqref{2.5} as follows
\begin{equation}\label{2.10}
K_k(y,y_1)=\sum_{j,\ell=0}^k(H^{-1}_k)_{j \ell}\, y^j y_1^{\ell}.
\end{equation}
\end{Lemma}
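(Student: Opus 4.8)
The plan is to read the claim off directly from the matrix identity \eqref{2.6}, which has already been established via the reproducing property \eqref{2.7}. Recall that \eqref{2.6} states
\[
K_k(y,y_1)=[1,y,\dots,y^{k}]\,H_k^{-1}\,[1,y_1,\dots,y_1^{k}]^t.
\]
First I would expand this bilinear form: writing $u=[1,y,\dots,y^k]^t$ and $v=[1,y_1,\dots,y_1^k]^t$, we have $u^tH_k^{-1}v=\sum_{j,\ell=0}^{k}(H_k^{-1})_{j\ell}\,u_jv_\ell=\sum_{j,\ell=0}^{k}(H_k^{-1})_{j\ell}\,y^j y_1^{\ell}$, which is exactly \eqref{2.10}. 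Since $H_k$ is invertible (being the Gram matrix of the linearly independent monomials $1,y,\dots,y^k$ with respect to $\mu$), this makes sense, and since the monomials $y^j$ (resp.\ $y_1^{\ell}$) are linearly independent the coefficients in the expansion of $K_k$ are uniquely determined, so the identification with the entries $(H_k^{-1})_{j\ell}$ is unambiguous.

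There is no real obstacle here; the content of the lemma is essentially a restatement of \eqref{2.6}. If one wanted a proof that does not invoke \eqref{2.6}, I would instead define a matrix $G=(G_{j\ell})_{j,\ell=0}^k$ by declaring \eqref{2.10} to hold with $G_{j\ell}$ in place of $(H_k^{-1})_{j\ell}$ for the genuine reproducing kernel $K_k$ of \eqref{2.5}, and then verify $GH_k=I_{k+1}$: the $(j,s)$ entry of $GH_k$ is $\sum_{\ell=0}^k G_{j\ell}h_{\ell+s}=\int\big(\sum_{\ell=0}^k G_{j\ell}y^\ell\big)y^s\,d\mu(y)$, and since $\sum_{\ell}G_{j\ell}y^\ell$ is the coefficient of $y_1^{\,j}$ (no, of the $j$-th monomial in the first variable) — more cleanly, integrating \eqref{2.10} against $y^s\,d\mu(y)$ and using the reproducing property $\int K_k(y,y_1)y^s\,d\mu(y)=y_1^s$ gives $\sum_{j,\ell}G_{j\ell}h_{j+s}y_1^\ell=y_1^s$, whence $\sum_j h_{j+s}G_{j\ell}=\delta_{s\ell}$, i.e.\ $H_kG=I$. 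Either way one arrives at $G=H_k^{-1}$, but this second route simply retraces the derivation of \eqref{2.6}, so I would present the one-line argument above.
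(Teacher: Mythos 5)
Your proposal is correct. Your primary, one-line argument (read the claim off the already-established display \eqref{2.6} by expanding the bilinear form, noting uniqueness of coefficients in a polynomial expansion) is a mild streamlining of the paper's proof, which instead re-derives the identity from the reproducing property: it expands $K_k(y,y_1)=\sum\beta_{j\ell}y^jy_1^\ell$, integrates against $y^s\,d\mu$ using \eqref{2.7} to get $\sum_j h_{j+s}\beta_{j\ell}=\delta_{s,\ell}$, and identifies the $\beta_{j\ell}$ with the entries of $H_k^{-1}$ column by column. Your alternative argument is precisely this, so you have in fact reproduced the paper's proof as well; both routes rest on the same reproducing property \eqref{2.7}, and the only difference is whether one treats \eqref{2.6} as already proved (it is, in the preceding paragraph) or re-derives it inside the proof of the lemma.
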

\begin{proof}
If we expand $K_k(y,y_1)$ in powers of $y$ and $y_1$
$$K_k(y,y_1)= \sum^k_{j,\ell=0}\beta_{j\ell} y^j y_1^{\ell},$$
then for every $s=0,\dots,k$, equation \eqref{2.7} holds and therefore 
$$ \sum^k_{j,\ell=0} \beta_{j\ell} h_{j+s}y_1^\ell=y_1^s.$$
Comparing the coefficients of $y_1^\ell$  in both sides of the last formula we see that
\begin{equation}\label{2.11}
\sum^k_{j=0} h_{j+s}\beta_{j\ell}=\delta_{s,\ell}.
\end{equation}
Since the $\ell$-th column of $H^{-1}_k$ is the unique solution $(z_0,\dots,z_{k})^t$ 
of the linear system of equations
\begin{equation*}
\sum^k_{j=0} h_{s+j}z_j=\delta_{s,\ell}
\end{equation*}
where $s=0,1,\dots,k$, we conclude from  \eqref{2.11} that $z_j=\beta_{j\ell}$.
\end{proof}

From \leref{le2.3} and \leref{le2.4}, we obtain the following important corollary.
\begin{Corollary}\label{co2.5} If $2k+2\ge m$, then the entries of $H^{-1}_k$ are homogeneous quadratic polynomials of the coefficients of $q(w)$.
\end{Corollary}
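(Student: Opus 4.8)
The plan is to trace the dependence on the coefficients $q_0,\dots,q_m$ of $q(w)$ through the two explicit descriptions of $H_k^{-1}$ already obtained: the Christoffel--Darboux formula for the reproducing kernel in \leref{le2.3}, and the identification of $H_k^{-1}$ with the coefficient array of $K_k(y,y_1)$ in \leref{le2.4}.

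First I would record, as noted in the discussion preceding \prref{pr2.1}, that each polynomial $p_j(y)$ from \eqref{2.2} has real coefficients that are homogeneous linear forms in $q_0,\dots,q_m$. Concretely, writing $q(w)=\sum_i q_iw^i$ one gets $p_j(y)=\sum_i q_i\,U_{j,i}(y)$, where $U_{j,i}(y)=\dfrac{w^{j+1-i}-w^{i-j-1}}{w-1/w}$ is (up to sign) a polynomial in $y=\tfrac12(w+1/w)$; hence the coefficients of $p_j(y)$ are $\Rset$-linear functions of $(q_0,\dots,q_m)$. Write $p_j(y)=\sum_i c_{j,i}\,y^i$ with each $c_{j,i}$ a homogeneous linear form in the $q$'s.

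Next, since $2k+2\ge m$, \leref{le2.3} gives $K_k(y,y_1)=\dfrac{N(y,y_1)}{2(y-y_1)}$ with $N(y,y_1)=p_{k+1}(y)p_k(y_1)-p_k(y)p_{k+1}(y_1)$. I would work in the polynomial ring $R[y,y_1]$ with $R=\Rset[q_0,\dots,q_m]$: every coefficient of $N$ is an $R$-bilinear expression in the $c_{k+1,i}$'s and the $c_{k,i}$'s, hence a homogeneous quadratic polynomial in $q_0,\dots,q_m$. Since $N(y,y)=0$, the polynomial $N$ is divisible by $y-y_1$ in $R[y,y_1]$, and long division by $y-y_1$ expresses the coefficients of the quotient as $\Rset$-linear combinations of the coefficients of $N$; so $K_k(y,y_1)=\tfrac12 N/(y-y_1)$ has coefficients that are again homogeneous quadratic in $q_0,\dots,q_m$. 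By \leref{le2.4}, $(H_k^{-1})_{j\ell}$ is exactly the coefficient of $y^j y_1^\ell$ in this expansion, which yields the claim.

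The one point that needs a little care --- the ``main obstacle'', such as it is --- is confirming that dividing by $y-y_1$ preserves homogeneity of degree two in the $q$'s; this is handled by the divisibility-over-$R$ argument above rather than by explicit computation, and everything else is bookkeeping. I do not expect a genuine difficulty.
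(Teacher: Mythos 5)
Your proof is correct and follows exactly the route the paper intends: the corollary is presented as an immediate consequence of Lemmas \ref{le2.3} and \ref{le2.4}, together with the linearity of the coefficients of $p_j(y)$ in $q_0,\dots,q_m$ noted just before Proposition \ref{pr2.1}. The only thing you add is the careful observation that dividing by $y-y_1$ in $R[y,y_1]$ preserves homogeneity of degree two, which is a reasonable point to spell out but not a genuine departure from the paper's argument.
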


The next proposition provides a characterization of Hankel matrices in terms of reproducing kernels, or the stable Fej\'er--Riesz factors of Bernstein--Szeg\H{o} weights.

\begin{Proposition}\label{pr2.6}
For a real positive definite $(k+1)\times (k+1)$ matrix $H$ the following conditions are equivalent.
\begin{enumerate}[\rm(i)] 
\item $H$ is a Hankel matrix.
\item There exist $p_{k}(y)\in\Rset_k[y]$ and $p_{k-1}(y)\in\Rset_{k-1}[y]$ such that
\begin{align}\label{2.12}
&\frac{p_k(y)p_{k-1}(y_1)-p_k(y_1)p_{k-1}(y)}{2(y-y_1)}
+p_k(y) p_k(y_1) 
= [1,y,\dots ,y^{k}]\,H^{-1}
\left[\begin{matrix} 1\\ y_1\\ \vdots\\ y_1^{k}\end{matrix}\right].
\end{align}
\item There exists $q(w)\in\Rset_{2k}[w]$ such that 
\begin{align}
&\frac{(ww_1)^{-k-1}q(w)q(w_1)-(ww_1)^{k+2}q(1/w)q(1/w_1)}{1-ww_1} \nonumber\\
&\qquad+\frac{(w_1/w)^{k+1}w_1q(w)q(1/w_1)-(w/w_1)^{k+1}wq(1/w)q(w_1)}{w-w_1} \nonumber\\
&\quad\quad\qquad\quad\qquad \quad\qquad\quad\qquad
= [1,y,\dots ,y^{k}]\,H^{-1}
\left[\begin{matrix} 1, y_1 , \dots , y_1^{k}\end{matrix}\right]^t.\label{2.13}
\end{align}
\end{enumerate}
\end{Proposition}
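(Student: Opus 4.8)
The plan is to prove the cycle $(\mathrm{i})\Rightarrow(\mathrm{ii})\Rightarrow(\mathrm{i})$ together with $(\mathrm{ii})\Leftrightarrow(\mathrm{iii})$. Note first that the right‑hand sides of \eqref{2.12} and \eqref{2.13} are the \emph{same} expression, which I will write as $\cK(y,y_1)=[1,y,\dots,y^{k}]\,H^{-1}\,[1,y_1,\dots,y_1^{k}]^{t}$; by \eqref{2.6} it is the reproducing kernel of the bilinear form on $\Rset_k[y]$ whose Gram matrix in the basis $1,\dots,y^{k}$ is $H$.

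$(\mathrm{i})\Rightarrow(\mathrm{ii})$: a positive definite Hankel matrix $H$ is the matrix of the moments $h_0,\dots,h_{2k}$ of a positive linear functional $\cL$ on $\Rset_{2k}[y]$; let $p^{o}_{0},\dots,p^{o}_{k}$ be its orthonormal polynomials with leading coefficients $\alpha_{j}>0$, so the three‑term coefficient is $a_k=\alpha_{k-1}/\alpha_k$. By \eqref{2.5}--\eqref{2.6} and the Christoffel--Darboux formula \eqref{2.8} at index $k-1$,
\[
\cK(y,y_1)=K_{k-1}(y,y_1)+p^{o}_{k}(y)p^{o}_{k}(y_1),\qquad K_{k-1}(y,y_1)=a_k\,\frac{p^{o}_{k}(y)p^{o}_{k-1}(y_1)-p^{o}_{k-1}(y)p^{o}_{k}(y_1)}{y-y_1},
\]
which is precisely \eqref{2.12} with $p_k=p^{o}_{k}\in\Rset_k[y]$ and $p_{k-1}=2a_k p^{o}_{k-1}\in\Rset_{k-1}[y]$.

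$(\mathrm{ii})\Rightarrow(\mathrm{i})$: the first term on the right of \eqref{2.12} has degree $\le k-1$ in each variable, so comparing the coefficient of $y^{k}y_1^{k}$ gives $(H^{-1})_{kk}=c^{2}$ with $c$ the coefficient of $y^k$ in $p_k$; since $H^{-1}$ is positive definite, $c\ne0$, so $\deg p_k=k$. Setting $r(y):=yp_k(y)-\tfrac12 p_{k-1}(y)$ (degree $k+1$), multiplying \eqref{2.12} by $2(y-y_1)$ and regrouping gives $\cK(y,y_1)=\dfrac{r(y)p_k(y_1)-r(y_1)p_k(y)}{y-y_1}$, so $H^{-1}$ is the Bezoutian matrix of the pair $(r,p_k)$. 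I would then finish with the elementary lemma: \emph{if $r$ has degree $n+1$ with leading coefficient $r_{n+1}$, $p$ has degree $\le n$ with coefficient vector $v\in\Rset^{n+1}$, and the invertible $(n+1)\times(n+1)$ matrix $G$ is defined by $\frac{r(y)p(y_1)-r(y_1)p(y)}{y-y_1}=\sum_{i,j=0}^{n}G_{ij}y^iy_1^j$, then $G^{-1}$ is Hankel.} For the proof, comparing coefficients in the defining identity yields $Ge_n=r_{n+1}v$ and $GZ-Z^{t}G=v\varrho^{t}-\varrho v^{t}$, where $Z$ is the super‑diagonal shift and $\varrho=(r_0,\dots,r_n)^{t}$; hence
\[
ZG^{-1}-G^{-1}Z^{t}=G^{-1}(GZ-Z^{t}G)G^{-1}=\xi\eta^{t}-\eta\xi^{t},\qquad \xi=G^{-1}v=r_{n+1}^{-1}e_n,\quad \eta=G^{-1}\varrho,
\]
so $ZG^{-1}-G^{-1}Z^{t}=r_{n+1}^{-1}(e_n\eta^{t}-\eta e_n^{t})$ vanishes outside its last row and column, i.e. $(G^{-1})_{i+1,j}=(G^{-1})_{i,j+1}$ for $0\le i,j\le n-1$; with $G^{-1}$ symmetric this is exactly the Hankel property.

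$(\mathrm{ii})\Leftrightarrow(\mathrm{iii})$: for a polynomial $q$ set $p_j(y)=\dfrac{w^{\,j+1}q(1/w)-w^{-j-1}q(w)}{w-1/w}$, which (as in the comments before \prref{pr2.1}) is a polynomial in $y$, of degree $\le j$ for $j\le k$ and of degree $k+1$ when $q(0)\ne0$, and which satisfies $p_{k+1}=2yp_k-p_{k-1}$. A direct computation — the one that produces \eqref{2.13}, substituting these expressions and using $y-y_1=\tfrac{(w-w_1)(ww_1-1)}{2ww_1}$ — shows that the left‑hand side of \eqref{2.13} equals $\dfrac{p_{k+1}(y)p_k(y_1)-p_k(y)p_{k+1}(y_1)}{2(y-y_1)}$, which, eliminating $p_{k+1}$, equals $p_k(y)p_k(y_1)+\dfrac{p_k(y)p_{k-1}(y_1)-p_{k-1}(y)p_k(y_1)}{2(y-y_1)}$. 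Hence \eqref{2.13} for $q\in\Rset_{2k}[w]$ holds iff \eqref{2.12} holds for the associated $p_k,p_{k-1}$; and conversely, given $p_k\in\Rset_k[y]$, $p_{k-1}\in\Rset_{k-1}[y]$ as in \eqref{2.12} (so $\deg p_k=k$ by the argument above), the polynomial $q(w):=w^{k}p_k(y)-w^{k+1}p_{k-1}(y)$ lies in $\Rset_{2k}[w]$, has $q(0)\ne0$, and returns $p_k,p_{k-1}$ under the substitution, hence satisfies \eqref{2.13}.

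The only real work is the rational‑function bookkeeping in the last step — verifying that the left‑hand side of \eqref{2.13} collapses to the displayed Christoffel--Darboux‑type expression; the Bezoutian lemma and the remaining implications are short.
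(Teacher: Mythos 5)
Your proof is correct, and the route it takes for (ii)$\Rightarrow$(i) is genuinely different from the paper's. For (i)$\Rightarrow$(ii) and (ii)$\Leftrightarrow$(iii) you do essentially what the paper does: Christoffel--Darboux for the forward implication, and the isomorphism $q(w)=w^{k}(p_{k}(y)-wp_{k-1}(y))$ from \eqref{2.14} for the last equivalence (your verification that the left side of \eqref{2.13} collapses to $\frac{p_{k+1}(y)p_k(y_1)-p_k(y)p_{k+1}(y_1)}{2(y-y_1)}$, combined with $p_{k+1}=2yp_k-p_{k-1}$, is exactly the ``straightforward check'' the paper waves at). The real divergence is in (ii)$\Rightarrow$(i). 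The paper takes a Cholesky factorization $H=LL^{t}$, rewrites the right side of \eqref{2.12} as $\sum_j p^{o}_{j}(y)p^{o}_{j}(y_1)$, and then asserts, via an induction on $k$ that is left to the reader, that this forces the existence of a positive linear functional having $p^{o}_{0},\dots,p^{o}_{k}$ as orthonormal polynomials, whence $H$ is its Hankel moment matrix. You instead rewrite the left side of \eqref{2.12} as the Bezoutian $\frac{r(y)p_{k}(y_1)-r(y_1)p_{k}(y)}{y-y_1}$ with $r(y)=yp_{k}(y)-\tfrac12 p_{k-1}(y)$, so that $G:=H^{-1}$ is the Bezout matrix of $(r,p_k)$, and prove the Hankel property of $G^{-1}$ directly from the displacement identity $ZG^{-1}-G^{-1}Z^{t}=r_{n+1}^{-1}(e_{n}\eta^{t}-\eta e_{n}^{t})$ (which vanishes off the last row and column, giving $(G^{-1})_{i+1,j}=(G^{-1})_{i,j+1}$). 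This is precisely the Lander-style Bezoutian argument the paper cites but chooses not to carry out, and it has the advantage of being a short, self-contained piece of linear algebra with no appeal to an unproved induction. What the paper's route buys in exchange is an explicit reconstruction of the linear functional, its moments, and its orthonormal polynomials directly from the Cholesky factor, which is what Remark 2.7 then uses to pin down the uniqueness (up to sign) of $p_{k}$, $p_{k-1}$ and $q$; with your approach that reconstruction is deferred until one re-runs (i)$\Rightarrow$(ii) on the now-known Hankel $H$. Both are sound; yours actually fills in a gap the paper leaves to the reader. (One trivial slip: you wrote ``the first term on the right of \eqref{2.12}'' when you clearly mean the first term on the left.)
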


\begin{proof}
The equivalence  (i)$\Leftrightarrow$(ii) follows from the work of Lander \cite{Lander} on Bezoutians, but we outline a direct proof in our setting, which allows us to reconstruct the polynomials  $p_k(y)$, $p_{k-1}(y)$ and $q(w)$ uniquely from $H$, up to an overall sign.

Suppose first that (i) holds, i.e. $H=H_k$ has the Hankel structure in \eqref{2.4}. Then $\cL(y^j)=h_{j}$ for $j=0,1,\dots,2k$ extends to a positive linear functional $\cL:\Rset_{2k}[y]\to\Rset$. If we denote by 
$\{p^o_j(y)\}_{0\leq j\leq k}$ the orthonormal polynomials with respect to $\cL$, then using formulas \eqref{2.5}, \eqref{2.6} and \eqref{2.8} we see that (ii) holds if we set $p_k(y)=p^o_k(y)$ and $p_{k-1}(y)=(2\alpha_{k-1\,k-1}/\alpha_{kk})p^o_{k-1}(y)$.

Conversely, suppose now that (ii) holds. Let $H=LL^t$ be the Cholesky factorization of $H$, and let $p^o_0(y),\dots,p^o_k(y)$ be the components of the vector $L^{-1}[1,y,\dots ,y^{k}]^t$. Then \eqref{2.12} can be rewritten as 
$$\frac{p_k(y)p_{k-1}(y_1)-p_k(y_1)p_{k-1}(y)}{2(y-y_1)}
+p_k(y) p_k(y_1) =p^o_0 (y)p^o_0 (y_1)+\cdots + p^o_k(y)p^o_k(y_1).$$
One can show now by induction on $k$ that if the last equation holds, then we can define a positive linear functional $\cL:\Rset_{2k}[y]\to\Rset$ with orthonormal polynomials $p^o_0(y),\dots,p^o_k(y)$. Then the right-hand side of \eqref{2.12} will be the reproducing kernel for this functional. Therefore $H$ will be a Hankel matrix of the form given in \eqref{2.4}, where $h_j=\cL(y^j)$. This completes the proof of the equivalence (i)$\Leftrightarrow$(ii).

To see that  (ii)$\Leftrightarrow$(iii), note that the linear mapping 
\begin{align}
\Rset_{k}[y]\times \Rset_{k-1}[y]&\to \Rset_{2k}[w]\nonumber \\
(p_{k}(y),p_{k-1}(y)) &\to q(w)=w^{k}(p_{k}(y)-wp_{k-1}(y)) \label{2.14}
\end{align}
is an isomorphism with inverse $p_j(y)=\frac{w^{j+1}q(1/w)-w^{-j-1}q(w)}{w-1/w}$, where $j=k$ and $j=k-1$. It is straightforward to check that $(p_k(y),p_{k-1}(y))$ satisfy \eqref{2.12} if and only if $q(w)$ satisfies  \eqref{2.13}, completing the proof.
\end{proof}

\begin{Remark}\label{re2.7}
Suppose that the equivalent conditions in \prref{pr2.6} hold.
If we set $p_j(y)=\sum_{l=0}^{j}\beta_{jl}y^l$ for $j=k$ and $j=k-1$, and if we compare the coefficients of $y_1^k$ on both sides of \eqref{2.12}, it follows that 
\begin{equation}\label{2.15}
\beta_{kk} p_k(y)=[1,y,\dots ,y^{k}]\,H^{-1} [0,0,\dots ,0,1] ^t.
\end{equation}
Comparing now the coefficients of $y^k$ we see that $\beta_{kk}^2=(H^{-1})_{k+1,k+1}$. In particular, this shows that $\beta_{kk}\neq 0$ and is uniquely determined from the right-hand side of \eqref{2.12}, up to a sign. We can use now  \eqref{2.15} to compute $p_k(y)$ and it is determined uniquely from $H$, up to an overall sign. Without a restriction, we can normalize $\beta_{kk}$ to be positive, and then $p_k(y)$ will coincide with the orthonormal polynomial $p^o_k(y)$  of degree $k$ with respect to the positive linear functional $\cL$ whose moment matrix is $H$. Once we compute $p_k(y)$, we can multiply both sides of  \eqref{2.12} by $2(y-y_1)$, and comparing the coefficients of $y_1^k$ we can compute $p_{k-1}(y)$. It will be a polynomial of degree $k-1$, with positive highest coefficient, which up to a positive factor is equal to the orthonormal polynomial $p^o_{k-1}(y)$ of degree $k-1$  with respect to $\cL$. Moreover, if $yp^o_{k-1}(y)=a_kp^o_k(y)+b_kp^o_k(y)+a_{k-1}p^o_{k-2}(y)$ is the three-term recurrence relation for the orthonormal polynomials, then 
\begin{equation}\label{2.16}
p^o_k(y)-2a_{k}w p^o_{k-1}(y)=p_{k}(y) -wp_{k-1}(y).
\end{equation}
This shows that $q(w)=w^k(p_{k}(y) -wp_{k-1}(y))$ in \eqref{2.13} is uniquely determined from \eqref{2.13} up to an overall sign and $q(0)\neq 0$. In particular, if we normalize the highest coefficient of $p_k(y)$ to be positive as above, then the linear mapping \eqref{2.14} will fix  the unique solution of \eqref{2.13} for which $q(0)>0$.
\end{Remark}

\begin{Remark}\label{re2.8}
One can use the computation in the proof of \prref{pr2.1} to show that if $q(w)=q_mw^m+\cdots + q_0$ is a polynomial of degree $m$ which is nonzero for $w\in\oDset$, then the moments 
\begin{equation}\label{2.17}
h_j(q)=\frac{2}{\pi} \int_{-1}^{1}y^j \frac{\sqrt{1-y^2}}{|q(w)|^2}\,dy, \qquad j\in\Nset_0
\end{equation}
for the measure in \eqref{2.1} are rational functions of the coefficients $q_0,\dots,q_m$. In particular, this shows that they can be computed explicitly using algebraic manipulations. Indeed, rewriting the integral above as an integral over the unit circle we see that
\begin{equation}\label{2.18}
h_j(q)=-\frac{1}{2^{j+2}\pi i} \oint_{\Tset}\frac{\left(w+\frac{1}{w}\right)^{j}\left(w-\frac{1}{w}\right)^2}{q(w) q(1/w)}\,\frac{dw}{w}, \qquad \text{for }\quad j\in\Nset_0,
\end{equation}
and using Cauchy's residue theorem we can evaluate the contour integral by computing the residues of the integrand at $w=0$ and at the zeros of the polynomial $\qr(w)=w^mq(1/w)$ which all lie in $\Dset$. This will give a rational symmetric function of the zeros $\qr(w)$, which can be expressed in terms of the coefficients of $q(w)$ by Vieta's formulas. The next proposition provides a more detailed information about these formulas. 
\end{Remark}

We denote by $\fR(f,g)$ the resultant of polynomials $f(w)$ and $g(w)$.

\begin{Proposition} \label{pr2.9}
Suppose that  $q(w)=q_mw^m+\cdots + q_0\in\Rset[w]$ is a polynomial of degree $m$ which is nonzero for $w\in\oDset$ and let $\qr(w)=w^mq(1/w)$. If $w_1,\dots,w_m$ denote the zeros of $q(w)$, then
\begin{equation}\label{2.19}
\fR(q,\qr)=(-1)^mq(1)q(-1)\Delta_m(q)^2, 
\end{equation}
where
\begin{equation}\label{2.20}
\Delta_m(q)=(-q_m)^{m-1}\prod_{1\leq k<l\leq m}(w_kw_l-1)\in\Zset[q_0,\dots,q_m]
\end{equation}
is a homogeneous polynomial of degree $m-1$ in the coefficients $q_0,\dots,q_m$. Moreover, for the moments $h_j(q)$ in \eqref{2.17} we have 
\begin{equation}\label{2.21}
2^{j}\Delta_m(q)h_j(q) \in \Zset[q_0^{\pm 1},q_1,q_2,\dots,q_m].
\end{equation}
\end{Proposition}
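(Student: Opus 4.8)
The two statements are proved separately: \eqref{2.19}--\eqref{2.20} by a purely algebraic argument from the classical product formula for resultants together with the elementary theory of symmetric functions, and \eqref{2.21} from the residue representation \eqref{2.18} of the moments followed by careful bookkeeping of denominators.

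For the resultant identity I would start from $\fR(q,\qr)=q_m^m\prod_{i=1}^m\qr(w_i)$. Writing $\qr(w)=q_0\prod_l(w-1/w_l)$ and using Vieta's relation $\prod_l w_l=(-1)^m q_0/q_m$ gives the key identity $\qr(w_i)=q_0\prod_l\frac{w_iw_l-1}{w_l}=q_m\prod_l(1-w_iw_l)$, whence $\fR(q,\qr)=q_m^{2m}\prod_{i,l}(1-w_iw_l)$. Splitting the double product into its diagonal and off-diagonal parts yields $\big(\prod_i(1-w_i^2)\big)\big(\prod_{k<l}(1-w_kw_l)^2\big)$, and since $\prod_i(1-w_i)=q(1)/q_m$ and $\prod_i(1+w_i)=(-1)^mq(-1)/q_m$ the diagonal factor equals $(-1)^mq(1)q(-1)/q_m^2$; collecting the powers of $q_m$ produces exactly $(-1)^m q(1)q(-1)\,\Delta_m(q)^2$, because $(-q_m)^{m-1}\prod_{k<l}(w_kw_l-1)$ squares to $q_m^{2m-2}\prod_{k<l}(1-w_kw_l)^2$. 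To see that $\Delta_m(q)$ is an integer polynomial, homogeneous of degree $m-1$: the polynomial $\prod_{k<l}(w_kw_l-1)$ is symmetric in $w_1,\dots,w_m$ and of degree $\le m-1$ in each variable separately (each $w_i$ occurs in the $m-1$ factors indexed by pairs through $i$), and a short induction on the number of variables and on the degree shows that any symmetric polynomial with per-variable degree $\le m-1$, when written in the elementary symmetric polynomials $e_1,\dots,e_m$, has total degree $\le m-1$; substituting $e_i=(-1)^iq_{m-i}/q_m$ and multiplying by $q_m^{m-1}$ then clears all denominators and leaves integer coefficients, and homogeneity follows since $q\mapsto\lambda q$ fixes the $w_i$ and scales $q_m$ by $\lambda$.

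For the moment identity, from \eqref{2.18} and the computation $y^j(1-y^2)=-M(w)/(2^{j+2}w^{j+2})$ with $M(w)=(w^2+1)^j(w^2-1)^2$, together with $q(1/w)=w^{-m}\qr(w)$, one gets $2^{j+1}h_j(q)=-\sum_{p\in\Dset}\Res_{w=p}g(w)$ with $g(w)=w^{m-j-3}M(w)/(q(w)\qr(w))$. The symmetry $g(1/w)=w^2g(w)$ gives $\Res_{w=1/w_i}g=-\Res_{w=w_i}g$ and $\Res_{w=0}g=-\Res_{w=\infty}g$, so $2^{j+1}h_j(q)=-\Res_{w=0}g+\sum_{i=1}^m\Res_{w=w_i}g$; here $\Res_{w=0}g$ is the coefficient of $w^{\,j+2-m}$ in the Taylor expansion at the origin of $M(w)/(q(w)\qr(w))$, hence lies in $\Zset[q_0^{\pm1},q_m^{\pm1},q_1,\dots,q_{m-1}]$ and vanishes for $j<m-2$, while in the sum one substitutes $\qr(w_i)=q_m\prod_l(1-w_iw_l)$ and $q'(w_i)=q_m\prod_{l\ne i}(w_i-w_l)$ and puts everything over a common denominator, the Vandermonde factor cancelling against the antisymmetry of the numerator. (These manipulations are carried out first for generic $q$ with $m$ distinct roots off $\Tset\cup\{0\}$ and then extended to all admissible $q$, both sides being rational functions of $q_0,\dots,q_m$.) Multiplying the relation by $\Delta_m(q)$, the parasitic factors $q(1)q(-1)$ and $\mathrm{disc}(q)$ that also appear cannot survive, because $h_j(q)$ stays finite as a rational function of the coefficients when $q$ acquires a root at $\pm1$ or a repeated root — the corresponding zero of $q(w)q(1/w)$ in the $y$-plane is either outside $[-1,1]$ or a simple interior zero absorbed by $\sqrt{1-y^2}$ — so its polar divisor is supported on $\{q_0=0\}\cup\{\Delta_m=0\}$, and the residue expression (equivalently, the identity $\fR(q,\qr)=q_m^m\prod_i\qr(w_i)$) shows that the numerator of $\sum_i\Res_{w=w_i}g$ already carries one factor $\prod_{k<l}(w_kw_l-1)$, so that $\Delta_m(q)$ occurs to the first power only; hence $\Delta_m(q)\cdot 2^{j+1}h_j(q)$ has denominator a power of $q_0$. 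Finally, writing $2^{j+1}h_j(q)=[w^0]\bigl(N(w)/(q(w)q(1/w))\bigr)$ with $N(w)=(w+1/w)^j\bigl(4-(w+1/w)^2\bigr)\in\Zset[w,w^{-1}]$ and $1/q(w)=\sum_{n\ge0}c_nw^n$, $c_n\in\Zset[q_0^{\pm1},q_1,\dots,q_m]$, the diagonal contribution $N_0\sum_n c_n^2$ is even because $N_0=-[w^{j+2}]M(w)$ is (an elementary binomial computation, or zero when $j$ is odd); so $\Delta_m(q)\cdot 2^{j+1}h_j(q)$ is an even element of $\Zset[q_0^{\pm1},q_1,\dots,q_m]$, and dividing by $2$ yields \eqref{2.21}.

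The main obstacle lies entirely in the denominator analysis for \eqref{2.21}: one must establish that the only irreducible factors of the denominator of $h_j(q)$ are $q_0$ and $\Delta_m(q)$, the latter with multiplicity exactly one, so that the larger factors thrown up by the residue calculus — powers of $q_m$, $\mathrm{disc}(q)$, and $q(1)q(-1)$ — all cancel. The finiteness-of-the-integral argument disposes of $q(1)$, $q(-1)$ and $\mathrm{disc}(q)$ cleanly, but it says nothing about $\Delta_m(q)$ itself, since $\{\Delta_m=0\}$ does not meet the locus where $q$ is nonvanishing on $\oDset$; pinning its multiplicity at one therefore requires the (routine but unavoidable) symmetric-function cancellation indicated above, and controlling the exact power of $2$ is a further, softer point. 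By contrast \eqref{2.19}--\eqref{2.20} are essentially formal.
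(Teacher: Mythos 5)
Your proof of \eqref{2.19}--\eqref{2.20} matches the paper's: both expand $\fR(q,\qr)=q_m^m\prod_i\qr(w_i)$, use Vieta to rewrite $\qr(w_i)=q_m\prod_l(1-w_iw_l)$, and split the resulting double product into diagonal and off-diagonal parts. Your justification that $\Delta_m(q)$ is a homogeneous integer polynomial of degree $m-1$ (per-variable-degree bound for symmetric polynomials) is correct and is a point the paper takes for granted.

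For \eqref{2.21} you take a genuinely different route in two respects, one of which works and one of which has a gap. What works: instead of the paper's decomposition $(w+\tfrac1w)^j(w-\tfrac1w)^2=\varphi_j(w)+\varphi_j(1/w)$ with $\varphi_j\in\Zset[w]$, $\varphi_j(\pm1)=0$, you use the functional equation $g(1/w)=w^2g(w)$ and the double zero of $M(w)=(w^2+1)^j(w^2-1)^2$ at $w=\pm1$ to get the residue identity $2^{j+1}h_j(q)=-\Res_0 g+\sum_i\Res_{w_i}g$ and to cancel the $q(\pm1)$ factors. You also invoke analytic finiteness of the integral \eqref{2.17} to kill the $\mathrm{disc}(q)$, $q(\pm1)$ and (implicitly) $q_m$ denominators, whereas the paper handles $\mathrm{disc}(q)$ purely algebraically by the Vandermonde/skew-symmetry cancellation and reserves the analyticity argument only for $q_m\to0$. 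Either route is acceptable; the paper's is more self-contained since the analyticity argument, to give a statement about irreducible factors of the denominator, needs a small density argument that you omit.

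The genuine gap is in the final power-of-two step. You arrive at $\Delta_m(q)\cdot 2^{j+1}h_j(q)\in\Zset[q_0^{\pm1},q_1,\dots,q_m]$, which is one factor of $2$ too weak, and you try to recover the missing factor by arguing that the element is ``even'' because in $2^{j+1}h_j(q)=\sum_k N_k\sum_n c_nc_{n+k}$ the diagonal piece $N_0\sum_n c_n^2$ has $N_0$ even. But the individual terms $b_k=\sum_n c_nc_{n+k}$ are infinite sums that do not live in $\Zset[q_0^{\pm1},q_1,\dots,q_m]$; to split $\Delta_m\cdot 2^{j+1}h_j=N_0\Delta_m b_0+2\sum_{k>0}N_k\Delta_m b_k$ and read off parity you would need $\Delta_m(q)b_k\in\Zset[q_0^{\pm1},q_1,\dots,q_m]$ for each $k$, which is essentially the statement under proof — so the argument is circular (or at best incomplete). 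The paper's $\varphi_j$-trick sidesteps this entirely: because $(w+\tfrac1w)^j(w-\tfrac1w)^2$ can be split as $\varphi_j(w)+\varphi_j(1/w)$ with $\varphi_j\in\Zset[w]$, the symmetry of the contour integral under $w\mapsto 1/w$ immediately gives $2^jh_j(q)=-\frac{1}{2\pi i}\oint_{\Tset}\frac{\varphi_j(w)}{q(w)q(1/w)}\frac{dw}{w}$, so the exponent $j$ (not $j+1$) appears from the start and no separate parity argument is needed. You should either adopt that decomposition, or genuinely justify why a factor of $2$ can be extracted from $\Delta_m(q)\cdot 2^{j+1}h_j(q)$.
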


\begin{proof}
Since $\qr(w)$ has roots $w_1^{-1},\dots,w_m^{-1}$ and the highest coefficient is $q_0$, the formula for the resultant yields
\begin{align*}
\fR(q,\qr)&=q_m^mq_0^m\prod_{k,l=1}^{m}(w_k-1/w_l)=\frac{q_m^mq_0^m}{\underbrace{(w_1\cdots w_m)^m}_{(-1)^mq_0^m/q_m^m}}\prod_{k,l=1}^{m}(w_kw_l-1)\\
&=(-1)^m\left(q_m^2\prod_{k=1}^{m}(w_k^2-1)\right) q_m^{2m-2}\prod_{1\leq k\neq l\leq m}(w_kw_l-1)\\
&=(-1)^m q(1)q(-1) \left(q_m^{m-1}\prod_{1\leq k<l\leq m}(w_kw_l-1)\right)^2,
\end{align*}
establishing the factorization in \eqref{2.19}. The sign in \eqref{2.20} is chosen so that $q_0^{m-1}$ has coefficient 1 in the expansion of $\Delta_m(q)$.

Let $\varphi_j(w)\in\Zset[w]$ be such that 
$$\left(w+\frac{1}{w}\right)^{j}\left(w-\frac{1}{w}\right)^2=\varphi_j(w)+\varphi_j(1/w).$$
Clearly, $\varphi_j(w)$ has zeros at $w=\pm 1$ and from \eqref{2.18} we see that 
$$2^{j}h_j(q)=-\frac{1}{2\pi i} \oint_{\Tset}\frac{\varphi_j(w)}{q(w) q(1/w)}\,\frac{dw}{w}.$$
Without any restriction, we can assume that the zeros $w_1,\dots,w_m$ of $q(w)$ are simple. Applying Cauchy's residue theorem it follows that 
$$2^{j}h_j(q)= \sum_{k=1}^{m}\frac{\varphi_j(w_k)w_k}{q(1/w_k) q'(w_k)}-\res_{w=0}\frac{\varphi_j(w)}{wq(w) q(1/w)}.$$
The right-hand side of the last equation is a rational symmetric function of $w_1,\dots,w_m$. Our next goal is to see what common denominator we can choose so that the numerator becomes a Laurent polynomial from the space $\Zset[w_1^{\pm 1},\dots,w_m^{\pm 1}]$.

Note that 
$$q(1/w_k)=q_m\prod_{j=1}^{m}(1/w_k-w_j)=\frac{q_m}{w_k^m}(1-w_k^2)\prod_{j\neq k}(1-w_kw_j),$$
and the term $(1-w_k^2)$ will cancel with a factor in $\varphi_j(w_k)$ since $\varphi_j(\pm 1)=0$. The term $\prod_{j\neq k}(1-w_kw_j)$ will cancel if we multiply $h_j(q)$ by $\Delta_m(q)$. Next, note that 
$q'(w_k)=q_m\prod_{j\neq k}(w_k-w_j)$, so all these terms will be canceled if we multiply $h_j(q)$ by the Vandermonde determinant $V(w_1,\dots,w_m)=\prod_{1\leq j < k\leq m}(w_k-w_j)$. Finally, note that the residue at $w=0$ belongs to $\Zset[w_1^{\pm 1},\dots,w_m^{\pm 1}]$. Thus, we conclude that 
$$2^{j}h_j(q)\Delta_m(q)=\frac{F(w_1,\dots,w_m)}{V(w_1,\dots,w_m)}\text{ for some } F(w_1,\dots,w_m) \in \Zset[w_1^{\pm 1},\dots,w_m^{\pm 1}].$$
The left-hand side $2^{j}h_j(q)\Delta_m(q)$ of the last equation is a symmetric function of $w_1,\dots,w_m$ and therefore $\frac{F(w_1,\dots,w_m)}{V(w_1,\dots,w_m)}$ must also be symmetric. Since $V(w_1,\dots,w_m)$ is skew-symmetric, it follows that $F(w_1,\dots,w_m)$ is also skew-symmetric and therefore divisible by each $(w_k-w_j)$ for $j<k$. This shows that $2^{j}h_j(q)\Delta_m(q)\in  \Zset[w_1^{\pm 1},\dots,w_m^{\pm 1}]$, and applying Vieta's formulas we see that 
$$2^{j}h_j(q)\Delta_m(q)\in \Zset[q_0^{\pm 1},q_1,q_2,\dots,q_{m-1}, q_m^{\pm 1}].$$ 
Finally, note that for arbitrary $q_0,q_1,\dots,q_{m-1}$ such that $|q_0|>|q_1|+\cdots +|q_{m-1}|$, the function $2^{j}h_j(q)\Delta_m(q)$ must be analytic in a neighborhood of $q_m=0$, which shows that \eqref{2.21} holds. 
\end{proof}

The formulas obtained in \prref{pr2.9} extend to case when $q(w)$ has simple roots at $w=\pm 1$.

\begin{Example}\label{Ex2.10}
Suppose $q(w)=q_4w^4+q_3w^3+q_2w^2+q_1w+q_0$ is a polynomial of degree $4$, which is nonzero for $w\in\oDset$. A straightforward computation shows that \eqref{2.19}-\eqref{2.20} hold with 
$$\Delta_4(q)=q_0^3 - q_0^2 q_2 + q_0q_1q_3 - q_0 q_3^2 - q_0^2 q_4 - 
 q_1^2 q_4 + 2 q_0 q_2 q_4 + q_1 q_3 q_4 - q_0 q_4^2 - q_2 q_4^2 + q_4^3,$$
and the first few moments \eqref{2.17} can be computed from the coefficients by the following formulas
\begin{align*}
h_0(q)&=\frac{q_0-q_4}{\Delta_4(q)}, \quad h_1(q)=\frac{q_3-q_1}{2\Delta_4(q)}\\
h_2(q)&=\frac{q_0^2 + q_1^2 - q_0q_2 - q_1q_3 + q_2q_4-q_4^2}{4q_0\Delta_4(q)}.
\end{align*}
From this computation, we can also obtain the first few moments for any polynomial of degree $j\leq 4$ simply by setting $q_{j+1}=\cdots=q_4=0$ above. For instance, if 
$$\hat{q}(w)=q_2w^2+q_1w+q_0$$ 
is a polynomial of degree $2$, we set $q_3=q_4=0$ and we see that $\Delta_4(q)=q_0^2(q_0-q_2)=q_0^2\Delta_2(\hat{q})$, where $\Delta_2(\hat{q})=q_0-q_2$. The first few moments for $\hat{q}(w)$ follow easily from the formulas above
\begin{align*}
h_0(\hat{q})=\frac{1}{q_0(q_0-q_2)}, \quad h_1(\hat{q})=-\frac{q_1}{2q_0^2(q_0-q_2)}, \quad 
h_2(\hat{q})=\frac{q_0^2 + q_1^2 - q_0q_2}{4q_0^3(q_0-q_2)},
\end{align*}
and illustrate \prref{pr2.9} in the case of a quadratic polynomial.
\end{Example}

\section{Matrix orthogonal polynomials}\label{se3}

The theory of matrix orthogonal polynomials has an extensive literature \cite{Ber, DPS} and the works \cite{Ger,Kozhan,Kozhane} will be of the most use for us. We will review these results and make more precise some of the statements found in \cite{Kozhan,Kozhane}.  The connection between the two variable problem and the theory of matrix orthogonal polynomials has been developed  in  \cite{DGIM} and the connection with scattering theory was sketched out in \cite{GeI1}. We include most of the results in this section to make the paper self contained and for the convenience of the reader.

Recall that $\Rset^{l\times l}$ denotes the space of all $l\times l$ matrices with real coefficients and $\Sym(\Rset^{l\times l})$ is the subspace of all symmetric matrices. For $N\in\Nset_0$ we denote by $\Rset^{l\times l}_N[x]$ the space of polynomials of degree at most $N$ in $x$ with coefficients in $\Rset^{l\times l}$.

A linear transformation $\cL_l: \Rset_{2N}[x]\to \Sym(\Rset^{l\times l})$ will be called a {\em matrix-valued  functional} and we define a matrix-valued inner product  on   $\Rset^{l\times l}_N[x]$ as follows: if $Q(x)=\sum_{j=0}^{N}Q_jx^j$ and $R(x)=\sum_{j=0}^{N}R_jx^j$ are two elements in $\Rset^{l\times l}_N[x]$, then  
$$\langle Q(x),R(x)\rangle_{\cL_l}= \sum_{j,k=0}^{N} Q_j \cL_{l} (x^{j+k}) R_k^{t}.$$
We say that $\cL_l$ is a {\em positive matrix-valued functional} if $\langle Q(x),Q(x)\rangle_{\cL_l}$ is a positive definite matrix for every matrix-valued polynomial $Q(x)=x^k+\sum_{j=0}^{k-1}Q_jx^j$ with highest coefficient $I_l$. With this in hand, we can apply the Gram-Schmidt process to $I_l,xI_l,\dots,x^NI_l$ to construct a sequence of matrix orthonormal polynomials $\{P_n(x)\}_{n=0}^{N}$, such that $P_n(x)\in  \Rset^{l\times l}_{n}[x]$ whose coefficient of $x^n$  is a lower triangular matrix with positive diagonal entries, and
$$\langle P_n(x),P_m(x)\rangle_{\cL_l}=\delta_{n,m}I_l. $$
It is not difficult to see that the above conditions uniquely define $P_n(x)$. 

Standard arguments show that these polynomials satisfy the following recurrence relation
\begin{equation}\label{3.1}
  x P_{n}(x) = A_{n+1} P_{n+1}(x) + B_{n}P_{n} (x)+A_{n}^t P_{n-1}(x),
\end{equation}
where $A_{n} = \langle xP_{n-1}(x),P_n(x)\rangle_{\cL_l} \in \Rset^{l\times l}$ is a lower triangular with positive diagonal entries, and $B_n= \langle xP_{n}(x),P_n(x)\rangle_{\cL_l} \in \Sym(\Rset^{l\times l})$. If $\cL_l$ is defined on  $\Rset_{2N}[x]$, then \eqref{3.1} holds for $n=0,1,\dots,N-1$ with the convention $P_{-1}(x)=0$. If $\cL_l$ extends to a positive matrix-valued linear functional on $\Rset[x]$, then  \eqref{3.1} holds for all $n\in\Nset_0$.

Our first goal is to characterize the linear functionals on $\Rset[x]$ for which the recurrence 
coefficients satisfy 
\begin{equation}\label{3.2}
A_{n+1}=\frac{1}{2}I_l \quad\text{ and }\quad B_n=0 \quad \text{ for all }\quad n\geq N,
\end{equation}
for some $N\in\Nset_0$.
Following \cite{Ger}, we introduce the matrix-valued function
\begin{equation}\label{3.3}
\Psi_{n}(z)=P_{n}(x)-2zA_{n}^tP_{n-1}(x),
\end{equation}
where $z=x-\sqrt{x^2-1}$ and the branch of the square root is chosen so that 
$z\rightarrow 0$ as $x\rightarrow +\infty$.
Using \eqref{3.1} and \eqref{3.3} one can deduce that for $n\geq 1$ we have
\begin{equation}\label{3.4}
\Psi_{n}(z)=\frac{1}{2z}A_{n}^{-1}\Psi_{n-1}(z)+
\frac{1}{2}A_{n}^{-1}\left[(I_l-4A_{n}A_{n}^t)z-2B_{n-1}
\right]P_{n-1}(x).
\end{equation}
Combing the last equation with  \eqref{3.2} we see that 
$$\hat\Psi_{n}(z)=z^n\Psi_{n}(z)=\hat\Psi_{N}(z)\ \text{ for all } n\ge N,$$
and then using \eqref{3.3} we find
\begin{equation}\label{3.5}
P_n(x)=\frac{z^{n+1}\hat\Psi_{N}(1/z)-z^{-n-1}\hat\Psi_{N}(z)}{z-1/z}\qquad \text{for } n\ge N.
\end{equation}
If $X_n(x)$ and $Y_n(x)$ are two solutions of equation~\eqref{3.1} with boundary conditions $X_{-1}(x)=Y_{-1}(x)=0$, then
routine manipulations yield
\begin{equation}\label{3.6}
X_{n-1}(x)^{\dagger}A_{n}Y_{n}(y)-X_{n}(x)^{\dagger} A^t_{n}Y_{n-1}(y)=(y-\bar x)\sum_{j=0}^{n-1} X_j(x)^{\dagger} Y_j(y),
\end{equation}
where $M^\dagger=\overline{M^t}$ denotes the Hermitian conjugate of $M$.  We set $X_n(x)=Y_n(x)=P_n(x)$ and $x\in\Rset$ in the last equation, and by taking the
limit $y\to x$ we find
\begin{equation}\label{3.7}
P_{n-1}(x)^t A_{n}P'_{n}(x)-P_{n}(x)^tA^t_{n}P'_{n-1}(x)=\sum_{j=0}^{n-1} P_j(x)^t P_j(x).
\end{equation}
Since the right-hand side of the above equation is positive definite for all $x_0\in\Rset$, it follows that there is no nonzero vector $a\in\Cset^l$ so that both $P_{n-1}(x_0)a=0$ and  $P_{n}(x_0)a=0$. This implies that there is no $x_0\in \Rset$ for which there is a vector $a\in\Cset^l\setminus\{0\}$ so that both $P_n(x_0)a=0$ and $\Psi_n(z_0)a=0$.

Taking $X_n(x)=Y_n(x)=P_n(x)$ and $y=x\in\Rset$ in \eqref{3.6} we see that 
 \begin{equation}\label{3.8}
P_{n-1}(x)^t A_{n} P_{n}(x)=P_{n}(x)^t A^t_{n} P_{n-1}(x),
\end{equation}
hence using \eqref{3.3} we find
\begin{equation}\label{3.9}
\Psi_{n}(z)^tP_{n}(x)-P_{n}(x)^t\Psi_{n}(z)=0.
\end{equation}
Finally equations \eqref{3.8} and \eqref{3.3} yield
\begin{equation}\label{3.10}
\Psi_n(z)^t\Psi_n(1/z)=\Psi_n(1/z)^t\Psi_n(z).
\end{equation}
With $X_n(x)=Y_n(x)=P_n(x)$ and $y=x$ in equation~\eqref{3.6} we find after using equation~\eqref{3.3}
\begin{align}
\frac{1}{2i}\left(P_{n}(x)^{\dagger}\frac{\Psi_{n}(z)}{z}-\frac{\Psi_{n}(z)^{\dagger}}{\bar z}P_{n}(x)\right)&=\left(\im(z)+\im\left(\frac{1}{z}\right)\right)\sum_{j=0}^{n-1} P_j(x)^{\dagger}P_j(x)
\nonumber\\
&+ \im\left(\frac{1}{z}\right)P_{n}(x)^{\dagger}P_{n}(x).\label{3.11}
\end{align}
This shows that there is no vector $a\neq0 $ so that $\Psi_n(z)a=0$ for $z\in\oDset\setminus [-1,1]$. 

If $\Phi(z)$ is an $l\times l$ matrix-valued meromorphic function defined in a neighborhood of $z_0$ such that $\det \Phi (z)$ is not identically 0, then 
\begin{itemize}
\item the {\em order of a pole} of $\Phi$ at $z_0$ is defined to be the minimal $k>0$ such that $\lim_{z\to z_0}(z-z_0)^k\Phi(z)$ is a finite nonzero matrix, and a {\em simple pole} is a pole of order 1.
\item $\Phi$ has a {\em simple zero} at $z_0$ if $\Phi(z)^{-1}$ has a simple pole at $z_0$.
\end{itemize}

A lemma of Newton and Jost \cite{NJ} that will be of use is,
\begin{Lemma}\label{le3.1}
Let $\Phi(z)$ be an $l\times l$ matrix which is analytic in the open disk\, $\Dset$, such that $\det \Phi(0)=0$ but $\det \Phi(z)\ne0$ for $z\in\Dset\setminus\{0\}$. Then the matrix $\Phi(z)$ has a simple zero at $z=0$ if and only if the relations
\begin{equation}\label{3.12}
\Phi(0)a=0, \quad \Phi(0)b+\Phi'(0)a=0,
\end{equation}
where $a$ and $b$ are constant vectors imply $a=0$.
\end{Lemma}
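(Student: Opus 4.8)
The plan is to prove the equivalence by matching Taylor and Laurent coefficients, after first pinning down the pole order of $\Phi(z)^{-1}$ at $0$. By Cramer's rule $\Phi(z)^{-1}=\mathrm{adj}(\Phi(z))/\det\Phi(z)$, where $\mathrm{adj}(\Phi(z))$ is analytic and $\det\Phi(z)$ is analytic, not identically zero on $\Dset$ (since $\det\Phi(z)\neq 0$ on $\Dset\setminus\{0\}$), and vanishes at $0$; hence $\det\Phi$ has a zero of some finite order at $0$ and $\Phi(z)^{-1}$ is meromorphic near $0$ with a pole there of some finite order $k$. This order satisfies $k\geq 1$: if $\Phi^{-1}$ were analytic at $0$ with value $M$, continuity would give $\Phi(0)M=I_l$, contradicting $\det\Phi(0)=0$. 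Therefore ``$\Phi$ has a simple zero at $0$'' is equivalent to ``$k=1$'', and ``$\Phi$ does not have a simple zero at $0$'' is equivalent to ``$k\geq 2$''. Throughout, write $\Phi(z)=\Phi_0+z\Phi_1+z^2\Phi_2+\cdots$ with $\Phi_0=\Phi(0)$, $\Phi_1=\Phi'(0)$, and $\Phi(z)^{-1}=z^{-k}R_{-k}+z^{-k+1}R_{-k+1}+\cdots$ with $R_{-k}\neq 0$.

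For the implication ``$\Phi$ has a simple zero at $0$ $\Rightarrow$ the relations \eqref{3.12} force $a=0$'', I set $k=1$ and compare coefficients in the identity $\Phi(z)^{-1}\Phi(z)=I_l$: the coefficient of $z^{-1}$ gives $R_{-1}\Phi_0=0$, and the coefficient of $z^0$ gives $R_{-1}\Phi_1+R_0\Phi_0=I_l$. Now if $\Phi_0a=0$ and $\Phi_0b+\Phi_1a=0$, applying the last identity to $a$ yields $a=R_{-1}\Phi_1a+R_0\Phi_0a=R_{-1}\Phi_1a=-R_{-1}\Phi_0b=0$, using successively $\Phi_0a=0$, then $\Phi_1a=-\Phi_0b$, then $R_{-1}\Phi_0=0$. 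Hence $a=0$.

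For the converse I argue by contraposition: assume $k\geq 2$ and produce vectors $a\neq 0$ and $b$ satisfying \eqref{3.12}. Comparing coefficients of $z^{-k}$ and $z^{-k+1}$ in $\Phi(z)\Phi(z)^{-1}=I_l$ (both exponents are negative, since $k\geq 2$) gives $\Phi_0R_{-k}=0$ and $\Phi_0R_{-k+1}+\Phi_1R_{-k}=0$. Since $R_{-k}\neq 0$, choose a standard basis vector $e_j$ with $a:=R_{-k}e_j\neq 0$ and put $b:=R_{-k+1}e_j$; then $\Phi_0a=0$ and $\Phi_0b+\Phi_1a=(\Phi_0R_{-k+1}+\Phi_1R_{-k})e_j=0$, so \eqref{3.12} does not force $a=0$. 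This completes the proof.

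The argument is essentially routine; the only point requiring care is the bookkeeping in the first paragraph, which guarantees $k\geq 1$ always and $k\geq 2$ precisely when the zero fails to be simple, so that the two coefficient computations above cover exactly the two cases of the equivalence. (One could alternatively deduce everything from the local Smith form $\Phi(z)=E(z)\,\mathrm{diag}(z^{\kappa_1},\dots,z^{\kappa_l})\,F(z)$ with $E,F$ analytic and invertible near $0$ and $0\leq\kappa_1\leq\cdots\leq\kappa_l$, reading off that the pole order of $\Phi^{-1}$ at $0$ equals $\kappa_l$ and translating $\kappa_l\leq 1$ into the kernel condition, but the direct coefficient comparison avoids invoking that machinery.)
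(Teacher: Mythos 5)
Your proof is correct. The paper itself does not prove Lemma~\ref{le3.1}; it cites it from Newton and Jost \cite{NJ} and only later writes down, for the simple-zero case, the Laurent identities \eqref{3.15}--\eqref{3.16} and the argument $a=M_{-1}\hat\Psi_N'(z_0)a$ following \eqref{3.20}--\eqref{3.21}. Your forward direction is exactly that computation, and your converse (showing via the $z^{-k}$ and $z^{-k+1}$ coefficients of $\Phi\Phi^{-1}=I_l$ that a pole of order $k\ge 2$ yields a nonzero $a$ satisfying \eqref{3.12}) supplies the part the paper leaves to the reference. The preliminary observation that $\Phi^{-1}$ has a pole of finite order $k\ge 1$ at $0$, via Cramer's rule together with $\det\Phi(0)=0$, is the right bookkeeping and makes the dichotomy $k=1$ versus $k\ge 2$ exhaustive. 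In short, this is the standard Laurent-coefficient argument implicit in the paper, written out completely and correctly.
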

Suppose that $\Phi(z)$ has a simple zero at $z=0$ and write
\begin{equation}\label{3.13}
\Phi(z)=\Phi(0)+\Phi'(0)z+\cdots,
\end{equation}
and
\begin{equation}\label{3.14}
\Phi(z)^{-1}=\frac{\Phi_{-1}}{z}+\Phi_0 + \Phi_1 z+\cdots.
\end{equation}
Since $\Phi(z) \Phi(z)^{-1}=I_l=\Phi(z)^{-1} \Phi(z)$ the above equations give
\begin{equation}\label{3.15}
\Phi(0)\Phi_{-1}=0=\Phi_{-1} \Phi(0),
\end{equation}
and
\begin{equation}\label{3.16}
\Phi(0)\Phi_0+\Phi'(0)\Phi_{-1}=I_l=\Phi_0 \Phi(0)+\Phi_{-1} \Phi'(0).
\end{equation}
If equation~\eqref{3.3} is used in \eqref{3.7} we find
\begin{equation}\label{3.17}
\begin{aligned}
P_{n}(x)^t\left(\frac{\Psi_{n}(z)}{z}\right)'-\frac{\Psi_{n}(z)^t}{z}\frac{d P_{n}(x)}{d z}=&\left(1-\frac{1}{z^2}\right)\sum_{j=0}^{n-1} P_j(x)^tP_j(x)\\& -\frac{1}{z^2}P_{n}(x)^tP_{n}(x).
\end{aligned}
\end{equation}
Thus at a zero $z_0\in\oDset\setminus\{0\}$ of $\det \Psi_{n}(z)$, which must be real,
there is a nonzero real vector $a$ so that  $\Psi_{n}(z_0)a=0$. Equation~\eqref{3.17}
implies that 
\begin{equation}\label{3.18}
a^tP_{n}(x_0)^t\frac{\Psi_{n}'(z_0)}{z_0}a=\left(1-\frac{1}{z_0^2}\right)\sum_{j=0}^{n-1} a^tP_j(x_0)^tP_j(x_0)a -\frac{1}{z_0^2}a^tP_{n}(x_0)^tP_{n}(x_0)a<0.
\end{equation}
Now suppose that besides $\Psi_{n}(z_0)a=0$ there is  vector $b$ so
that $\Psi_{n}(z_0)b+\Psi'_{n}(z_0)a=0$. Hence,
$$a^tP_{n}(x_0)^t\Psi_{n}(z_0)b+a^tP_{n}(x_0)^t\Psi'_{n}(z_0)a=0.$$
An application of \eqref{3.9} yields that
$a^tP_{n}(x_0)^t\Psi_{n}'(z_0)a=0$ which contradicts equation~\eqref{3.18}. From \leref{le3.1}
we find that $\Psi_{n}(z)$ has a simple zero at $z_0$.

We summarize the above discussion with the following
\begin{Lemma}\label{le3.2}
Suppose that  $\cL_l: \Rset[x]\to \Sym(\Rset^{l\times l})$ is a positive matrix-valued functional and let $\hat\Psi_{n}(z)=z^n(P_{n}(x)-2zA_{n}^tP_{n-1}(x))$. Then $\hat\Psi_{n}(z)$ is nonsingular for $z\in\oDset\setminus ([-1,0)\cup(0,1])$, and  can have only simple zeros when $z\in  [-1,0)\cup(0,1]$.
\end{Lemma}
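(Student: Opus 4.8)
The plan is to assemble the statement from the identities \eqref{3.6}--\eqref{3.18} already established, treating the point $z=0$, the rest of $\oDset\setminus[-1,1]$, and the interval $[-1,0)\cup(0,1]$ in turn. First I would dispatch $z=0$: writing $\hat\Psi_n(z)=z^nP_n(x)-2z^{n+1}A_n^tP_{n-1}(x)$ with $x=\tfrac12(z+1/z)$, I note that $x$ has a simple pole at $z=0$ while $P_n$, $P_{n-1}$ have degrees $n$, $n-1$ in $x$, so both terms extend analytically across $z=0$, the second vanishing there, and the value is $\hat\Psi_n(0)=2^{-n}p_{n,n}$ with $p_{n,n}$ the leading coefficient of $P_n$. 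Since $p_{n,n}$ is lower triangular with positive diagonal entries it is invertible, so $z=0$ is not a zero of $\hat\Psi_n$.

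Next I would show that $\hat\Psi_n$ is nonsingular on $\oDset\setminus[-1,1]$. For $z\neq0$ one has $\det\hat\Psi_n(z)=z^{nl}\det\Psi_n(z)$, so it suffices to treat $\Psi_n$, which is analytic on $\Cset\setminus\{0\}$. Suppose $\Psi_n(z)a=0$ for some $a\neq0$ and some $z\in\oDset$, $z\notin[-1,1]$. Pairing \eqref{3.11} with $a$ on the left and on the right annihilates the left-hand side and leaves
\[
\Big(\im z+\im\tfrac1z\Big)\sum_{j=0}^{n-1}\|P_j(x)a\|^2+\im\tfrac1z\,\|P_n(x)a\|^2=0 .
\]
If $0<|z|<1$ and $z\notin\Rset$, writing $z=re^{i\theta}$ shows $\im z+\im\tfrac1z=(r-r^{-1})\sin\theta$ and $\im\tfrac1z=-r^{-1}\sin\theta$ have the same nonzero sign, so every summand vanishes; in particular $P_0(x)a=0$, and since $P_0$ is a constant invertible matrix, $a=0$. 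If $|z|=1$ with $z\neq\pm1$, the first coefficient vanishes while $\im\tfrac1z\neq0$, so $P_n(x)a=0$; then \eqref{3.3} (using $z\neq0$ and $A_n$ invertible) forces $P_{n-1}(x)a=0$, and now $x=\tfrac12(z+\bar z)=\mathrm{Re}(z)\in(-1,1)$ is real, so the positive-definiteness of the right side of \eqref{3.7} gives a contradiction. Together with the treatment of $z=0$, this yields nonsingularity of $\hat\Psi_n$ on $\oDset\setminus([-1,0)\cup(0,1])$.

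Finally I would show that any zero of $\hat\Psi_n$ in $[-1,0)\cup(0,1]$ is simple. By the previous step $\det\Psi_n\not\equiv0$, so its zeros are isolated and all lie in $[-1,1]\setminus\{0\}$; at such a $z_0$ the matrix $\Psi_n(z_0)$ is real, so $\Psi_n(z_0)a=0$ for some real $a\neq0$. Evaluating \eqref{3.17} at $z_0$ and pairing with $a$ gives \eqref{3.18}, whose right-hand side is strictly negative, since $1-z_0^{-2}\le0$ (as $|z_0|\le1$) and $P_n(x_0)a\neq0$ — a common null vector of $P_n(x_0)$ and $P_{n-1}(x_0)$, equivalently of $P_n(x_0)$ and $\Psi_n(z_0)$ by \eqref{3.3}, being excluded by \eqref{3.7}. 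Hence $a^tP_n(x_0)^t\Psi_n'(z_0)a\neq0$. If moreover $\Psi_n(z_0)b+\Psi_n'(z_0)a=0$ for some $b$, then multiplying on the left by $a^tP_n(x_0)^t$ and using \eqref{3.9} forces $a^tP_n(x_0)^t\Psi_n'(z_0)a=0$, a contradiction; so by \leref{le3.1}, applied on a small disk around $z_0$ on which $z_0$ is the only zero of $\det\Psi_n$, the zero is simple, and the same holds for $\hat\Psi_n=z^n\Psi_n$.

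The step I expect to be the main obstacle is the sign analysis in the middle paragraph — keeping straight the relative signs of $\im z$ and $\im(1/z)$ in the two regimes $|z|<1$ and $|z|=1$ so as to exclude non-real zeros in $\oDset$ — together with its counterpart in the last paragraph at the endpoints $z_0=\pm1$, where $1-z_0^{-2}=0$ so the strict negativity in \eqref{3.18} rests entirely on the term $-z_0^{-2}\|P_n(x_0)a\|^2$, and hence on the no-common-null-vector consequence of \eqref{3.7}.
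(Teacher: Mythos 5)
Your proof is correct and follows essentially the same line of argument as the paper's: equation~\eqref{3.11} to rule out zeros in $\oDset\setminus[-1,1]$, equations~\eqref{3.17}--\eqref{3.18} plus \eqref{3.9} to preclude a second-order vanishing, and \leref{le3.1} to conclude simplicity. The additions you make — the explicit evaluation $\hat\Psi_n(0)=2^{-n}p_{n,n}$, the careful split of the sign analysis in \eqref{3.11} into $|z|<1$ versus $|z|=1$, and the observation that strict negativity in \eqref{3.18} at $z_0=\pm1$ hinges on $P_n(x_0)a\neq0$ — are all correct elaborations of points the paper treats tersely, not a different method.
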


Note that $\hat\Psi_{N}(z)\in  \Rset^{l\times l}_{2N}[z]$ for every $N\in\Nset_0$ and from \eqref{3.10} it follows that 
\begin{equation}\label{3.19}
W_N(x)=\hat\Psi_{N}(1/z)^t\hat\Psi_{N}(z)\in \Sym(\Rset^{l\times l}_{2N}[x]).
\end{equation}
Moreover, since $\hat\Psi_{N}(0)$ is an invertible matrix, it is easy to see that $\hat\Psi_{N}(z)$ and $W_N(x)$ have the same degrees. The degree of a matrix-valued polynomial $\Phi(z)$ is the minimal $k$ for which $\Phi(z)\in  \Rset^{l\times l}_{k}[z]$ (however, the coefficient of $z^k$ need not be an invertible matrix).

Let $z_0\in\Dset\setminus\{0\}$ be a zero of $\hat\Psi_{N}(z)=z^{N}\Psi_{N}(z)$, which must be real and simple. Let $E_0$ be the orthogonal projection onto $\ker(\hat\Psi_{N}(z_0))$, and let 
$$\hat{\Psi}_N(z)^{-1}=\frac{M_{-1}}{z-z_0}+M_{0}+\cdots,$$
be the Laurent expansion of $\hat{\Psi}_N(z)^{-1}$ in a neighborhood of $z_0$. 
From equations~\eqref{3.15} and \eqref{3.16} we have
\begin{equation}\label{3.20}
\hat\Psi_{N}(z_0)M_{-1}=0=M_{-1}\hat\Psi_{N}(z_0)  
\end{equation}
and
\begin{equation}\label{3.21}
\hat\Psi'_{N}(z_0)M_{-1}+\hat\Psi_{N}(z_0)M_{0}=I_l=M_{-1}\hat\Psi'_{N}(z_0)+M_{0} \hat\Psi_{N}(z_0).
\end{equation}
We now use an argument of Newton and Jost \cite{NJ} to show that 
\begin{equation}\label{3.22}
\ker(\hat\Psi_{N}(z_0))=\Ran(M_{-1}).
\end{equation}
The inclusion $\Ran(M_{-1})\subset\ker(\hat\Psi_{N}(z_0))$ follows immediately from \eqref{3.20}. Conversely, if $a\in\ker(\hat\Psi_{N}(z_0))$, then \eqref{3.21} shows that $a=M_{-1}\hat\Psi_{N}'(z_0)a\in \Ran(M_{-1})$ which gives \eqref{3.22}. This implies that $E_0M_{-1}=M_{-1}$. If \eqref{3.2} holds we see from equation~\eqref{3.5} that
\begin{equation}\label{3.23}
P_n(x_0)A=\frac{z_0^{n+1}\hat\Psi_{N}(1/z_0)}{z_0-1/z_0}A, \qquad \text{ when }n\geq N,
\end{equation}
for $A=E_0$ or $A=M_{-1}$. Thus taking the limit $n\to \infty$ in equation~\eqref{3.17} and using the fact that $E_0$ is symmetric gives,
$$
\frac{1}{z_0-1/z_0}E_0\hat\Psi_{N}(1/z_0)^t\hat\Psi'_{N}(z_0)A=(1-1/z_0^2)\sum_{j=0}^{\infty}E_0 P_j(x_0)^t P_j(x_0) A,
$$
since $P_n(x_0)A\to0$ as $n\to\infty$ by \eqref{3.23}. If $A=E_0$ then the left hand side of the above equation is negative semi-definite. If $A=M_{-1}$ then equation~\eqref{3.21} shows that
$\hat\Psi'_{N}(z_0)M_{-1}=I_l-\hat\Psi_{N}(z_0)M_{0}$, 
so that
\begin{align*}
\frac{1}{z_0-1/z_0}E_0\hat\Psi_{N}(1/z_0)^t\hat\Psi'_{N}(z_0)M_{-1}&=\frac{1}{z_0-1/z_0}E_0\hat\Psi_{N}(1/z_0)^t (I_l-\hat\Psi_{N}(z_0)M_{0})\\
&=\frac{1}{z_0-1/z_0}E_0\hat\Psi_{N}(1/z_0)^t,
\end{align*}
where equation~\eqref{3.10} has been used to obtain the last equality.
This implies that
\begin{align}
&\frac{1}{z_0-1/z_0}E_0\hat\Psi_{N}(1/z_0)^t=(1-1/z_0^2)\sum_{j=0}^{\infty}E_0 P_j(x_0)^t P_j(x_0) M_{-1} \nonumber \\
&\quad=(1-1/z_0^2)\sum_{j=0}^{\infty}E_0 P_j(x_0)^t P_j(x_0)E_0 M_{-1}=(1-1/z_0^2)L_0 M_{-1}, \label{3.24}
\end{align}
where
\begin{align}
L_0&=\sum_{j=0}^{\infty}E_0 P_j(x_0)^t P_j(x_0)E_0+(I_l-E_0) \nonumber  \\
&=\frac{z_0}{(z_0-1/z_0)^2}E_0\hat\Psi_{N}(1/z_0)^t\hat\Psi'_{N}(z_0)E_0+(I_l-E_0)\label{3.25}
\end{align}
is an invertible positive definite matrix that commutes with $E_0$. Note that if we use the polynomial $W_N(x)$ in \eqref{3.19}, we have 
$$E_0\hat\Psi_{N}(1/z_0)^t\hat\Psi'_{N}(z_0)E_0=\frac{z_0-1/z_0}{2z_0}E_0W_N'(x_0)E_0.$$
and therefore $L_0=\frac{1}{2(z_0-1/z_0)}E_0W_N'(x_0)E_0+(I_l-E_0)$.
\begin{Definition}\label{de3.3}
If $z_0\in (-1,0)\cup(0,1)$ is zero of $\hat\Psi_{N}(z)$ and if $E_0$ denotes the orthogonal projection onto $\ker(\hat\Psi_{N}(z_0))$, then the $l\times l$ matrix
\begin{equation}\label{3.26}
\rho_0=E_0 \left(\frac{1}{2(z_0-1/z_0)}E_0W_N'(x_0)E_0+(I_l-E_0)\right)^{-1}
\end{equation}
is called a {\em canonical weight} at $x_0=\frac{1}{2}(z_0+1/z_0)$.
\end{Definition}

A definition for canonical weights can be found in \cite{Kozhan} which has been modified in \cite{Kozhane}, due to the incorrect Lemma 2.19, to be similar to the one given above. From equations \eqref{3.25} and \eqref{3.26} we see that $\rho_0=E_0L_0^{-1}$ which combined with \eqref{3.24} shows that
\begin{equation}\label{3.27}
\frac{\hat\Psi_{N}(1/z_0)}{z_0-1/z_0}\rho_0=\frac{(z_0-1/z_0)}{z_0}M_{-1}^t.
\end{equation}

We have the following,
\begin{Theorem}\label{th3.4}
Let  $\cL_l: \Rset[x]\to \Sym(\Rset^{l\times l})$ be a positive matrix-valued  functional with orthonormal 
polynomials $\{P_n(x)\}_{n=0}^{\infty}$, normalized so that the highest coefficients are  lower triangular matrices with positive diagonal entries. Let $N\in\Nset_0$ and let $\hat\Psi_{N}(z)=z^N(P_{N}(x)-2zA_{N}^tP_{N-1}(x))$. 
Then $\hat\Psi_{N}(z)$ is nonsingular for $z\in\oDset\setminus ([-1,0)\cup(0,1])$, and  can have only simple zeros when $z\in  [-1,1]\setminus\{0\}$.
Moreover, $A_{n+1}=\frac{1}{2}I_l$ and $B_n=0$ for all $n\ge N$ if and only if
\begin{equation}\label{3.28}
\cL_{l}(f)=\frac{2}{\pi}\int_{-1}^{1}f(x)\sqrt{1-x^2}\,W_N(x)^{-1}\,dx+\sum_{j=1}^{k}f(x_j) \rho_j , \quad\text{ for }f\in \Rset[x]
\end{equation}
where $W_N(x) = \hat\Psi_{N}(1/z)^t\hat\Psi_{N}(z)$,  $\rho_j$ are the canonical weights at the zeros $\{z_j\}_{j=1}^{k}$ of $ \hat\Psi_{N}(z)$ in $(-1,1)$, and $x_j=\frac{1}{2}(z_j+1/z_j)$.
If
\begin{itemize}
\item $A_{N}\ne \frac{1}{2}I_l$, then $\hat\Psi_{N}(z)$ is a polynomial of degree $2N$;
\item $A_{N}=\frac{1}{2}I_l$, but $B_{N-1}\ne 0$, then $\hat\Psi _{N}(z)$ is a polynomial of degree $2N-1$.  
\end{itemize}
\end{Theorem}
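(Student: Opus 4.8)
Write \textbf{the plan} as follows.

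The first assertion is exactly \leref{le3.2} with $n=N$ (the sets $[-1,0)\cup(0,1]$ and $[-1,1]\setminus\{0\}$ coincide, and $\hat\Psi_{N}$ is nonsingular at $z=\pm1$). For the two degree statements I would extract the top coefficients of $\hat\Psi_{N}(z)=z^{N}\Psi_{N}(z)$ from \eqref{3.3}--\eqref{3.4}: matching leading coefficients in the three-term relation gives $\kappa_{n-1}=A_{n}\kappa_{n}$ for the leading coefficients $\kappa_{n}$ of $P_{n}(x)$, and a direct computation then shows the coefficient of $z^{2N}$ in $\hat\Psi_{N}(z)$ to be $2^{-N}(I_{l}-4A_{N}^{t}A_{N})\kappa_{N}$, which vanishes precisely when $A_{N}^{t}A_{N}=\tfrac14 I_{l}$, i.e.\ (for a lower triangular matrix with positive diagonal) when $A_{N}=\tfrac12 I_{l}$. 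If $A_{N}=\tfrac12 I_{l}$, then \eqref{3.4} collapses to $\hat\Psi_{N}(z)=\hat\Psi_{N-1}(z)-2B_{N-1}z^{N}P_{N-1}(x)$, whose coefficient of $z^{2N-1}$ is $-2^{2-N}B_{N-1}\kappa_{N-1}$, nonzero iff $B_{N-1}\neq0$; this gives the last two bullets.

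For the equivalence let $\widetilde{\cL}_{l}$ denote the functional defined by the right-hand side of \eqref{3.28}. One checks it is a positive matrix-valued functional on $\Rset[x]$: $W_{N}(x)=\hat\Psi_{N}(z)^{\dagger}\hat\Psi_{N}(z)$ is positive definite on $(-1,1)$ since $\hat\Psi_{N}$ is nonsingular on $\Tset$, $W_{N}(x)^{-1}$ stays bounded at $x=\pm1$ (so the moments are finite), and each $\rho_{j}$ is symmetric positive semidefinite by Definition~\ref{de3.3} and \eqref{3.25}. For $n\ge N$ put
\[
\pi_{n}(x):=\frac{z^{n+1}\hat\Psi_{N}(1/z)-z^{-n-1}\hat\Psi_{N}(z)}{z-1/z};
\]
this is a polynomial of degree $n$ in $x$ (the formula also makes sense, with degree $\le N-1$, for $n=N-1$), and, using $\hat\Psi_{N}(0)=2^{-N}\kappa_{N}$, its leading coefficient is $2^{n}\hat\Psi_{N}(0)=2^{n-N}\kappa_{N}$, a lower triangular matrix with positive diagonal. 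The crux is to prove that for $n\ge N$ the polynomials $\pi_{n}$ are orthonormal with respect to $\widetilde{\cL}_{l}$ and orthogonal to $\Rset_{n-1}^{l\times l}[x]$.

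I would prove this by a residue computation in the spirit of \prref{pr2.1}. Parametrize $x=\cos\theta$, $z=e^{i\theta}$. Exploiting the symmetry $W_{N}(x)=\hat\Psi_{N}(1/z)^{t}\hat\Psi_{N}(z)=\hat\Psi_{N}(z)^{t}\hat\Psi_{N}(1/z)$ from \eqref{3.10}, one simplifies $\pi_{n}(x)W_{N}(x)^{-1}$ to $\bigl(z^{n+1}\hat\Psi_{N}(z)^{-t}-z^{-n-1}\hat\Psi_{N}(1/z)^{-t}\bigr)/(z-1/z)$, where $\hat\Psi_{N}(z)^{-t}:=(\hat\Psi_{N}(z)^{t})^{-1}$; after also using the $\theta\mapsto-\theta$ symmetry this turns $\langle\pi_{n},x^{s}I_{l}\rangle_{\widetilde{\cL}_{l}}$, for $0\le s\le n$, into
\[
-\frac{1}{2\pi i}\oint_{\Tset}x^{s}z^{n}(z-1/z)\,\hat\Psi_{N}(z)^{-t}\,dz\;+\;\sum_{j}x_{j}^{s}\,\pi_{n}(x_{j})\,\rho_{j}.
\]
By \leref{le3.2}, inside $\Dset$ the integrand is meromorphic with only simple poles, at the $z_{j}$ and (only when $s=n$) at $z=0$. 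At $z_{j}$ its residue is $x_{j}^{s}z_{j}^{n}(z_{j}-1/z_{j})M_{-1,j}^{t}$, where $M_{-1,j}$ is the residue of $\hat\Psi_{N}(z)^{-1}$ at $z_{j}$; since $\hat\Psi_{N}(z_{j})\rho_{j}=0$ (as $\rho_{j}=E_{0}(\cdots)^{-1}$ with $E_{0}$ the projection onto $\ker\hat\Psi_{N}(z_{j})$) and \eqref{3.27} gives $\hat\Psi_{N}(1/z_{j})\rho_{j}=\tfrac{(z_{j}-1/z_{j})^{2}}{z_{j}}M_{-1,j}^{t}$, one computes $x_{j}^{s}\pi_{n}(x_{j})\rho_{j}=x_{j}^{s}z_{j}^{n}(z_{j}-1/z_{j})M_{-1,j}^{t}$, exactly that residue. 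Hence the contour integral over $\Tset$ cancels the point-mass sum: for $s<n$ nothing is left, and for $s=n$ the residue at $z=0$ contributes $2^{-n}\hat\Psi_{N}(0)^{-t}=(2^{n-N}\kappa_{N})^{-t}$. Thus $\langle\pi_{n},x^{s}I_{l}\rangle_{\widetilde{\cL}_{l}}=\delta_{sn}(2^{n-N}\kappa_{N})^{-t}$, which says $\pi_{n}\perp\Rset_{n-1}^{l\times l}[x]$ and $\langle\pi_{n},\pi_{n}\rangle_{\widetilde{\cL}_{l}}=I_{l}$; and because the leading coefficient $2^{n-N}\kappa_{N}$ is lower triangular with positive diagonal, $\pi_{n}$ ($n\ge N$) must be the $n$-th orthonormal polynomial of $\widetilde{\cL}_{l}$. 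I expect the exact reconciliation of the $z_{j}$-residues with the canonical weights — i.e.\ the precise use of Definition~\ref{de3.3}, \eqref{3.25} and \eqref{3.27} — to be the only genuinely delicate point.

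It remains to draw both implications. The algebraic identity $(z+1/z)\bigl(z^{n+1}\hat\Psi_{N}(1/z)-z^{-n-1}\hat\Psi_{N}(z)\bigr)=\bigl(z^{n+2}\hat\Psi_{N}(1/z)-z^{-n-2}\hat\Psi_{N}(z)\bigr)+\bigl(z^{n}\hat\Psi_{N}(1/z)-z^{-n}\hat\Psi_{N}(z)\bigr)$ yields $2x\pi_{n}=\pi_{n+1}+\pi_{n-1}$ for all $n\ge N$; comparing this for $n\ge N+1$ with the three-term recurrence of $\widetilde{\cL}_{l}$ (where now $\pi_{m}$ is its $m$-th orthonormal polynomial for $m\ge N$) and matching the top two degrees forces \eqref{3.2} for $\widetilde{\cL}_{l}$ — for the index $n=N$ one uses that $x\pi_{N}-\tfrac12\pi_{N+1}=\tfrac12\pi_{N-1}$ has degree $\le N-1$, so $B_{N}\kappa_{N}=0$ — and moreover shows the $\hat\Psi_{N}$ attached to $\widetilde{\cL}_{l}$ to equal $z^{N}(\pi_{N}-z\pi_{N-1})=\hat\Psi_{N}$. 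For ``$\Leftarrow$'' we have $\cL_{l}=\widetilde{\cL}_{l}$, so $\pi_{n}=P_{n}$ for $n\ge N$ and \eqref{3.2} follows. For ``$\Rightarrow$'', \eqref{3.5} already gives $P_{n}=\pi_{n}$ for $n\ge N$; the previous sentence shows $\widetilde{\cL}_{l}$ too satisfies \eqref{3.2} and has the same $\hat\Psi_{N}$, so by Christoffel--Darboux — which expresses the reproducing kernel $K_{N}$ through $P_{N}$ and $A_{N}^{t}P_{N-1}=\tfrac12\pi_{N-1}$ alone — the functionals $\cL_{l}$ and $\widetilde{\cL}_{l}$ have the same $K_{N}$, hence the same moments up to degree $2N$, hence (the free recurrence propagating all higher moments) $\cL_{l}=\widetilde{\cL}_{l}$, i.e.\ \eqref{3.28} holds.
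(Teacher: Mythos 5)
Your proof is correct and follows essentially the same strategy as the paper: the core of both arguments is the contour-integral computation converting $\langle\pi_n,x^sI_l\rangle_{\tilde{\cL}_l}$ into $-\frac{1}{2\pi i}\oint_\Tset x^sz^n(z-1/z)\hat\Psi_N(z)^{-t}\,dz$, with the residues at the zeros $z_j$ of $\hat\Psi_N$ in $(-1,1)$ cancelling, via \eqref{3.27}, the point-mass contributions of the canonical weights, leaving only the $z=0$ residue when $s=n$. Your identification of $\cL_l$ with the right-hand side of \eqref{3.28} in the forward direction uses Christoffel--Darboux to match $K_N$ (since $\hat\Psi_N$ encodes both $P_N=\pi_N$ and $A_N^tP_{N-1}=\tfrac12\pi_{N-1}$) together with uniqueness of the \eqref{3.2}-extension, whereas the paper propagates orthonormality of the $P_n$ downward for $n<N$ by solving the three-term recurrence for $P_{n-1}$; both routes work. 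One small inaccuracy worth correcting: $W_N(x)^{-1}$ need not stay bounded at $x=\pm 1$ --- if $\hat\Psi_N$ has a simple zero at $z=\pm1$ then $(z\mp 1)^2\sim\mp 2z(1\mp x)$ gives $W_N(x)^{-1}=O\bigl((1\mp x)^{-1}\bigr)$ --- but the $\sqrt{1-x^2}$ factor still yields an integrable $(1\mp x)^{-1/2}$ singularity, so the moments are indeed finite.
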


\begin{proof}
We have already established the properties of $\hat\Psi_{N}(z)$, and the statements about its degree follow from \eqref{3.4}. In particular, these properties show that the integral in \eqref{3.28} is well defined. 
We show next that the assumptions on the recurrence coefficients imply that the polynomials $P_n$ are orthonormal with respect to the functional given in \eqref{3.28}. To this end, take $s<n$ and consider 
\begin{equation}\label{3.29}
\cL_l(x^sP_n(x))=\frac{2}{\pi}\int_{-1}^{1}x^sP_n(x)\sqrt{1-x^2}\,W_N(x)^{-1}\,dx+\sum_{j=1}^{k}x_j^s P_n(x_j) \rho_j. 
\end{equation} 
If we set $z=e^{i\theta}$ and $x=\cos\theta=\frac{1}{2}\left(z+\frac{1}{z}\right)$, then for $n\ge N$ the integral in the above expression can be rewritten using \eqref{3.5}
as
\begin{align*}
\frac{2}{\pi}\int_{-1}^{1}x^sP_n(x)\sqrt{1-x^2}\,W_N(x)^{-1}\,dx
&=\frac{1}{\pi i}\int^\pi_{-\pi} z^{n+1}x^s(\sin\theta) (\hat\Psi_{N} (z)^{-1})^{t} d\theta\\
&=-\frac{1}{2\pi i}\oint_{\Tset} z^{n}x^s(z-1/z) (\hat\Psi_{N} (z)^{-1})^{t} dz.
\end{align*}
We see that the  integrand is analytic except at the zeros of $\hat\Psi_{N}(z)$ so 
the residue theorem shows,
\begin{align*}
\frac{2}{\pi}\int_{-1}^{1}x^sP_n(x)\sqrt{1-x^2}\,W_N(x)^{-1}\,dx
& = -\sum_{j=1}^{k} z_j^{n}x_j^s(z_j-1/z_j) \mathrm{res}_{z=z_j}( \hat\Psi_{N}(z)^{-1})^{t}\\
& =-\sum_{j=1}^{k}x_j^sP_n (x_j)\rho_j,
\end{align*}
where we have used the definition of  $\rho_j$, equation~\eqref{3.27}, and 
equation~\eqref{3.5}. The above equation proves  \eqref{3.29}. The fact that $\langle P_n(x),P_n(x)\rangle_{\cL_l}=I_l$ follows by the above argument after 
using equation~\eqref{3.5} to eliminate $P_n$ and utilizing the fact that the residue at $z=0$ is equal to $I_l$. For $n<N$ the result
follows by induction with the aid of the three term recurrence formula. 

To show the other direction assume that $\cL_l$ has the form in \eqref{3.28}. Construct the polynomials $P_n$ for $n\geq N$ from equation~\eqref{3.5}. Then the argument above shows that $P_n$ is an orthonormal polynomial with respect to $\cL_l$. The polynomials satisfy the recurrence formula 
$$
\frac{1}{2}P_{n+2}(x)+\frac{1}{2}P_{n}(x)=xP_{n+1}(x)
$$
for $n\ge N$ so that $A_{n+1}=\frac{1}{2} I_l$ and $B_{n+1}=0$ for $n\ge N$. 

We also have using \eqref{3.5}
$$xP_{N}(x)-\frac{1}{2}P_{N+1}(x)=\frac{1}{2}\frac{z^{N}\hat\Psi_{N}(1/z)-(1/z)^{N} \hat\Psi_{N}(z)}{z-1/z},$$
which is  polynomial of degree $N-1$ implying that $B_{N}=0$. This completes the proof.
\end{proof}

We will also need the following theorem, see \cite[Theorem 1]{YK}.
\begin{Theorem}[Matrix Fej\'er--Riesz factorization]\label{th3.5}
Let $W(x) \in\Sym(\Rset^{l\times l}_{N}[x])$ be a positive definite matrix for a.e. $x\in(-1,1)$.
Then there exists $\Psi(z) \in\Rset^{l\times l}_{N}[z]$ which is invertible for $z\in\Dset$ such that $W(x)=\Psi(1/z)^t\Psi(z)$. Moreover, $\Psi$ can be normalized so that $\Psi(0)$ is a lower triangular matrix with positive diagonal entries.
\end{Theorem}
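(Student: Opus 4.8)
This is \cite[Theorem~1]{YK}; the argument I would give reduces to the classical matrix-valued Fej\'er--Riesz theorem on the unit circle, followed by a uniqueness argument that upgrades the complex spectral factor to a real one.

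First I would pass to the circle via the Chebyshev substitution $x=\frac{1}{2}(z+1/z)$. Writing $\tilde W(z)=W\big(\frac{1}{2}(z+1/z)\big)$, the fact that $W$ has coefficients in $\Sym(\Rset^{l\times l})$ and that $x$ is a real rational expression invariant under $z\mapsto 1/z$ shows that $\tilde W(z)=\sum_{k=-N}^{N}C_kz^k$ with $C_k\in\Sym(\Rset^{l\times l})$ and $C_{-k}=C_k$; thus $\tilde W$ is a matrix-valued Laurent polynomial of degree $\le N$ with $\tilde W(z)^t=\tilde W(z)$ and $\overline{\tilde W(\bar z)}=\tilde W(z)$. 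On $\Tset$ it is Hermitian and positive definite for a.e.\ $z$ (the exceptional set is contained in $\{\pm1\}$, where $W$ may be singular), and $\log\det\tilde W\in L^{1}(\Tset)$ automatically, since $\det\tilde W$ is a nonnegative scalar trigonometric polynomial.

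Next I would apply the classical matrix Fej\'er--Riesz theorem on $\Tset$: there is an outer matrix polynomial $\Phi(z)=\sum_{k=0}^{N}\Phi_kz^{k}$ with $\Phi_k\in\Cset^{l\times l}$, i.e.\ $\det\Phi(z)\ne0$ for $z\in\Dset$, such that $\tilde W(z)=\Phi^{*}(z)\Phi(z)$ as Laurent polynomials, where $\Phi^{*}(z):=\sum_k\Phi_k^{*}z^{-k}$ coincides with the adjoint of $\Phi(z)$ on $\Tset$. Requiring $\Phi(0)$ to be lower triangular with positive diagonal determines $\Phi$ uniquely: any two such factors satisfy $\Phi_2=U\Phi_1$ for a constant unitary $U$, so $U=\Phi_2(0)\Phi_1(0)^{-1}$ is simultaneously unitary and lower triangular, hence diagonal unitary, and comparing the positive diagonals of $\Phi_1(0)$ and $\Phi_2(0)$ gives $U=I_l$.

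Finally I would show that $\Phi$ has real coefficients, which is the only nonroutine point. Set $\Theta(z):=\overline{\Phi(\bar z)}=\sum_k\overline{\Phi_k}z^{k}$. Applying the coefficientwise conjugation $M(z)\mapsto\overline{M(\bar z)}$ to the identity $\tilde W=\Phi^{*}\Phi$ and using $\overline{\tilde W(\bar z)}=\tilde W$ from the first step yields $\tilde W(z)=\Theta^{*}(z)\Theta(z)$. Since $\det\Theta(z)=\overline{\det\Phi(\bar z)}$ is nonzero on $\Dset$ and $\Theta(0)=\overline{\Phi(0)}$ is again lower triangular with positive diagonal, the uniqueness just established forces $\Theta=\Phi$, i.e.\ $\overline{\Phi_k}=\Phi_k$ for all $k$. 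Hence $\Psi:=\Phi\in\Rset^{l\times l}_N[z]$ is invertible for $z\in\Dset$, $\Phi^{*}(z)=\Phi(1/z)^{t}$, and $W(x)=\tilde W(z)=\Psi(1/z)^{t}\Psi(z)$ with $\Psi(0)$ lower triangular with positive diagonal, as required. The main things to watch are the bookkeeping of how transposition, complex conjugation and the reflection $z\mapsto 1/z$ interact, and having the uniqueness of the normalized outer spectral factor on the circle available; once those are in place, the rest is formal.
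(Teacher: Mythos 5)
Your argument is sound. The paper does not actually prove Theorem~\ref{th3.5}; it simply cites \cite[Theorem 1]{YK}, and your proposal supplies a self-contained derivation along standard lines that the paper leaves to the reference. The route is correct: pass from $(-1,1)$ to $\Tset$ via $x=\frac{1}{2}(z+1/z)$; invoke the Rosenblum/Helson--Lowdenslager matrix Fej\'er--Riesz theorem in its polynomial form (a matrix Laurent polynomial of degree $N$ that is positive semidefinite on $\Tset$ factors as $\Phi^{*}\Phi$ with $\Phi$ a degree-$\le N$ matrix polynomial whose determinant is nonzero on $\Dset$); normalize $\Phi(0)$ by a constant unitary; and use uniqueness of the normalized factor to force real coefficients. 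The conjugation trick $\Theta(z)=\overline{\Phi(\bar z)}$ is exactly the right bookkeeping --- one checks $\overline{\Phi^{*}(\bar z)}=\Theta^{*}(z)$ and $\overline{\tilde W(\bar z)}=\tilde W(z)$, so $\Theta$ is a second normalized factor and must equal $\Phi$. Two small inaccuracies, neither of which affects correctness: (a) the set where $\tilde W$ fails to be positive definite on $\Tset$ need not lie in $\{\pm1\}$ --- it is the finite zero set of $\det W$ in $[-1,1]$ --- but all the argument requires is that $\tilde W(z)\ge 0$ for every $z\in\Tset$ (continuity plus a.e.\ positivity) and $\det\tilde W\not\equiv 0$, which is exactly what you have; (b) you invoke the normalization of $\Phi(0)$ without constructing it --- one should write $\Phi(0)=QL$ with $Q$ unitary and $L$ lower triangular with positive diagonal, then replace $\Phi$ by $Q^{*}\Phi$, which preserves both $\Phi^{*}\Phi$ and nonsingularity on $\Dset$. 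With these tidied up, your proof is a legitimate alternative to simply citing \cite{YK}.
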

\noindent
Note that since $\Psi(0)$ is invertible, the matrix-valued  polynomials $W(x)$ and $\Psi(z)$ in the Fej\'er--Riesz factorization theorem above will have the same degree. 
Combining the last theorem with \thref{th3.4} we can establish the following useful theorem.

\begin{Theorem}\label{th3.6}
For a positive matrix-valued  functional $\cL_l: \Rset_{2N}[x]\to \Sym(\Rset^{l\times l})$ the following conditions are equivalent:
\begin{enumerate}[\rm(i)]
\item \label{th3.6.1}
There exists $W(x) \in\Sym(\Rset^{l\times l}_{2N}[x])$ which is positive definite for $x\in(-1,1)$ and has at most simple zeros at $x=\pm 1$ such that for $f\in \Rset_{2N}[x]$ we have
\begin{equation} \label{3.30}
\cL_{l}(f)=\frac{2}{\pi}\int_{-1}^{1}f(x)\sqrt{1-x^2}\,W(x)^{-1}\,dx.
\end{equation}
\item $\hat\Psi_{N}(z)=z^N(P_{N}(x)-2zA_{N}^tP_{N-1}(x))$ is invertible for $z\in(-1,1)$.
\end{enumerate}
Moreover, if the equivalent conditions above hold, then $W(x)$ in {\rm{(\ref{th3.6.1})}} is uniquely determined from $\cL_l$ by $W(x) = \hat\Psi_{N}(1/z)^t\hat\Psi_{N}(z)$, and \eqref{3.30} defines the unique extension of $\cL_l$ to a positive matrix-valued functional on 
$\Rset[x]$ for which $A_{n+1}=\frac{1}{2}I_l$ and $B_n=0$ for all $n\ge N$.
\end{Theorem}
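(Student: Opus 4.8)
The plan is to read \thref{th3.6} as saying that the two constructions ``$\cL_l\mapsto\hat\Psi_N$'' and ``$W\mapsto$\,(its normalized Fej\'er--Riesz factor)'' are mutually inverse, and to prove it by combining the matrix Fej\'er--Riesz theorem \thref{th3.5} with \thref{th3.4}.

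For the implication $(\mathrm{i})\Rightarrow(\mathrm{ii})$ I would start from $W$ and apply \thref{th3.5} to get $\Psi\in\Rset^{l\times l}_{2N}[z]$, invertible on $\Dset$, with $\Psi(0)$ lower triangular with positive diagonal and $W(x)=\Psi(1/z)^t\Psi(z)$. Because $W$ has at most simple zeros at $x=\pm1$, the right-hand side of \eqref{3.30} is finite and defines a positive matrix-valued functional $\tilde{\cL}_l$ on $\Rset[x]$ (positivity is immediate from $W(x)^{-1}\succ0$ a.e.); by hypothesis $(\mathrm{i})$ it restricts to $\cL_l$ on $\Rset_{2N}[x]$, so $\cL_l$ and $\tilde{\cL}_l$ have the same $\hat\Psi_N$ (which depends only on the moments of order $\le 2N$). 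Next I would rerun the contour/residue computation from the proof of \thref{th3.4}, now with no discrete masses since $\Psi$ is nonsingular on $\Dset$, together with the identity $xR_N-\tfrac12R_{N+1}=\tfrac12\,\frac{z^N\Psi(1/z)-z^{-N}\Psi(z)}{z-1/z}\in\Rset^{l\times l}_{N-1}[x]$, to conclude that $R_n(x):=\frac{z^{n+1}\Psi(1/z)-z^{-n-1}\Psi(z)}{z-1/z}$, $n\ge N$, are the orthonormal polynomials of $\tilde{\cL}_l$ and that its recurrence coefficients satisfy $A_{n+1}=\tfrac12I_l$, $B_n=0$ for $n\ge N$. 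A short computation using $A_N^tR_{N-1}=xR_N-\tfrac12R_{N+1}$ then recovers $\hat\Psi_N=z^N(R_N-2zA_N^tR_{N-1})=\Psi$, which is invertible on $\Dset\supset(-1,1)$; this is $(\mathrm{ii})$, and it also shows $W=\hat\Psi_N(1/z)^t\hat\Psi_N(z)$.

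For $(\mathrm{ii})\Rightarrow(\mathrm{i})$ I would set $W:=\hat\Psi_N(1/z)^t\hat\Psi_N(z)$, which lies in $\Sym(\Rset^{l\times l}_{2N}[x])$ by \eqref{3.19}. By \leref{le3.2} the hypothesis ``$\hat\Psi_N$ invertible on $(-1,1)$'' upgrades to ``$\hat\Psi_N$ nonsingular on $\oDset\setminus\{\pm1\}$ with at most simple zeros at $z=\pm1$''. Hence for $x\in(-1,1)$ the associated $z$ lies on $\Tset\setminus\{\pm1\}$, so $1/z=\bar z$, $W(x)=\hat\Psi_N(z)^\dagger\hat\Psi_N(z)\succ0$, and since $x\mp1$ vanishes to second order at $z=\pm1$ a simple zero of $\hat\Psi_N$ there gives at most a simple zero of $W$ at $x=\pm1$; thus $W$ meets the requirements of $(\mathrm{i})$. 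Now $W$ is exactly the weight $W_N$ appearing in \eqref{3.28}, and since $\hat\Psi_N$ has no zeros in $(-1,1)$ the discrete part there is absent, so the converse direction of \thref{th3.4}, applied with this $\hat\Psi_N$, shows that $\tilde{\cL}_l(f):=\frac{2}{\pi}\int f\sqrt{1-x^2}W^{-1}\,dx$ is a positive functional on $\Rset[x]$ whose orthonormal polynomials of degree $\le N$ reconstruct the same $\hat\Psi_N$. Finally, because $\hat\Psi_N$ determines both $P_N$ and $A_N^tP_{N-1}$, and hence via the Christoffel--Darboux identity \eqref{3.6} the reproducing kernel of order $N$ and therefore the moment matrix of order $N$, I would conclude $\tilde{\cL}_l|_{\Rset_{2N}[x]}=\cL_l$, which is exactly \eqref{3.30}.

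The uniqueness assertions follow easily: if $W$ satisfies $(\mathrm{i})$ then the first implication identifies its normalized Fej\'er--Riesz factor with $\hat\Psi_N$, forcing $W=\hat\Psi_N(1/z)^t\hat\Psi_N(z)$; and any positive extension of $\cL_l$ to $\Rset[x]$ with $A_{n+1}=\tfrac12I_l$, $B_n=0$ for $n\ge N$ has $P_0,\dots,P_N$ and $A_N$ determined by $\cL_l|_{\Rset_{2N}[x]}$, after which $P_{N+1}=2(xP_N-A_N^tP_{N-1})$, $P_{N+2}=2xP_{N+1}-P_N,\dots$, so all its moments — and hence the extension — are determined and coincide with \eqref{3.30}. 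I expect the main obstacle to be precisely this matched-pair bookkeeping: checking that ``integrate against $W^{-1}$, then form $\hat\Psi_N$'' returns the Fej\'er--Riesz factor with the correct normalization, and dually that $\hat\Psi_N$ recovers $\cL_l$ on $\Rset_{2N}[x]$ — the matrix analog of the Hankel/Bezoutian correspondence of \prref{pr2.6}, whose analytic inputs are the residue at $z=0$ and careful handling of the endpoint behavior at $z=\pm1$.
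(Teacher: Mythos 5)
Your proof is correct, and the $(\mathrm{i})\Rightarrow(\mathrm{ii})$ direction tracks the paper's argument closely: apply \thref{th3.5} to obtain the normalized factor $\Psi$, build $R_n$ from \eqref{3.5}, verify orthonormality by the contour integral from \thref{th3.4}, read off $A_{n+1}=\tfrac12 I_l$, $B_n=0$ for $n\geq N$ from the explicit formula, and recover $\hat\Psi_N=\Psi$ via the three-term relation. The uniqueness clause is also handled the same way (the recurrence determines the extension once $P_0,\dots,P_N,A_N$ are fixed). The main divergence is in $(\mathrm{ii})\Rightarrow(\mathrm{i})$. The paper's route is shorter: it first observes (at the start of the proof) that $\cL_l$ has a \emph{unique} extension to $\Rset[x]$ satisfying \eqref{3.2}, applies \thref{th3.4} to that extension to get \eqref{3.28}, and notes that condition (ii) kills the discrete part so \eqref{3.28} collapses to \eqref{3.30}; restricting to $\Rset_{2N}[x]$ gives (i). You instead define $\tilde\cL_l$ directly from $W=\hat\Psi_N(1/z)^t\hat\Psi_N(z)$, rerun the Theorem~\ref{th3.4} contour computation to show $\tilde\cL_l$ reproduces the same $\hat\Psi_N$, and then invoke the Christoffel--Darboux identity \eqref{3.6} together with the matrix analogue of \leref{le2.4} (that the degree-$N$ reproducing kernel $\sum_{j\leq N}P_j(x)^tP_j(y)=[I_l,xI_l,\dots,x^NI_l]\,H_N^{-1}[I_l,yI_l,\dots,y^NI_l]^t$ determines the block-Hankel moment matrix $H_N$) to conclude $\tilde\cL_l|_{\Rset_{2N}[x]}=\cL_l$. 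This extra step is sound (I checked that $\hat\Psi_N$ does determine $P_N$ and $A_N^tP_{N-1}$ and hence $K_{N}$), and it makes the matrix Hankel/Bezoutian correspondence — the higher-dimensional version of \prref{pr2.6} — explicit, which is conceptually useful in light of the two-variable applications later in the paper. But it is somewhat redundant: the unique-extension observation you already establish for the final clause would finish $(\mathrm{ii})\Rightarrow(\mathrm{i})$ in one line, as the paper does, without any reproducing-kernel bookkeeping.
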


\begin{proof}
Note first that if $\cL_l$ is a positive matrix-valued  functional defined on $\Rset_{2N}[x]$, then there exists a unique extension of  $\cL_l$ to a positive matrix-valued functional on the space 
$\Rset[x]$ of all polynomials such that \eqref{3.2} holds. Indeed, we can use \eqref{3.1} to define $P_{n}(x)$ for $n>N$, and it follows that the highest coefficient of $P_n(x)$ will be $2^{n-N}$ times the highest coefficient of $P_N(x)$. It is easy to see that there is a unique extension of $\cL_l$ so that these polynomials are orthonormal: if $\cL_l$ is extended to $\Rset_{2n}[x]$, then the equation $ \langle xP_{n}(x),P_n(x)\rangle_{\cL_l} =0$ defines uniquely $\cL(x^{2n+1})\in \Sym(\Rset^{l\times l})$, then $\langle xP_{n}(x),P_{n+1}(x)\rangle_{\cL_l}=\frac{1}{2}I_l$ defines uniquely $\cL_l(x^{2n+2})$, and the resulting functional is positive on $\Rset_{2n+2}[x]$.

Suppose first that (i) holds. By the Fej\'er--Riesz factorization theorem, the matrix-valued polynomial  $W(x)$ can be factored as $W(x)=\Psi(1/z)^t\Psi(z)$, where $\Psi(z)\in \Rset^{l\times l}_{2N}[z]$ is nonzero for $z\in\Dset$ and $\Psi(0)$ is a lower triangular matrix with positive diagonal entries. Since $W(x)$ is invertible for $x\in(-1,1)$ and has at most simple zeros at $x=\pm 1$, it follows that $\Psi(z)$ is also invertible for $z\in\Tset\setminus\{\pm1\}$ and has at most simple zeros at $z=\pm 1$. 
We now set $P_n(x)= \frac{z^{n+1}\Psi(1/z)-z^{-n-1}\Psi(z)}{z-1/z}$ for $n\geq N$ and the contour integral given in \thref{th3.4} shows that $P_n(x)$ is an orthonormal matrix-valued polynomial of degree $n$ with respect to the functional $\cL$ in \eqref{3.30}.
The explicit formula for $P_n$ also shows that $A_{n+1}=\frac{1}{2}I_l$ and $B_n=0$ for $n\ge N$ and therefore $\Psi(z)=\hat\Psi_n(z)$ for $n\geq N$. From this, we deduce (ii) and the formula for $W(x)$. 
The implication  (ii)$\Rightarrow$(i) follows immediately from \thref{th3.4}. 
\end{proof}

\section{Two variable Bernstein--Szeg\H{o} measures}\label{se4}

Recall that $\Rset_{n,m}[x,y]$ denotes the space of polynomials with real coefficients in $x$ and $y$ of degrees at most $n$ in $x$ and $m$ in $y$. A linear functional $\cL: \Rset_{2n,2m}[x,y]\to\Rset$ is said to be {\em positive} if $\cL(p^2)>0$ for every nonzero polynomial $p\in \Rset_{n,m}[x,y]$. Using $\cL$, we define an inner product on $\Rset_{n,m}[x,y]$ by 
$$\langle p,q\rangle_{\cL} =\cL(pq),\qquad p,q\in \Rset_{n,m}[x,y],$$
and we consider the spaces of orthogonal polynomials in \eqref{1.9} with respect to this inner product.
For every $l\leq m$, we fix a basis $\cB_l$ of  $\Rset_l[y]$ which leads to an orthonormal basis $\{p_{k,l}^j(x,y):0\leq j\leq l\}$ of $\sP_{k,l;\cL}[x,y]$ as 
explained briefly in \seref{ss1.2} and we set 
$ P_{k,l}(x,y)=[p_{k,l}^0(x,y),p_{k,l}^1(x,y),\dots,p_{k,l}^l(x,y)]^t.$
Explicitly, we apply the Gram-Schmidt process consecutively to the elements in the sets $\cB_{l}\cup x \cB_{l}\cup x^{2}\cB_{l}\cup\cdots =\{\beta_{0}(y),\beta_{1}(y),\dots, \beta_{l}(y)\}\cup \{x\beta_{0}(y),x\beta_{1}(y),\dots, x\beta_{l}(y)\}\cup \{x^{2}\beta_{0}(y),x^{2}\beta_{1}(y),\dots, x^{2}\beta_{l}(y)\}\cup\cdots$ and we put the first $l+1$ elements in $P_{0,l}(x,y)$, the next $l+1$ in $P_{1,l}(x,y)$, etc.
\begin{figure}[h! ] 
\includegraphics[scale=0.4]{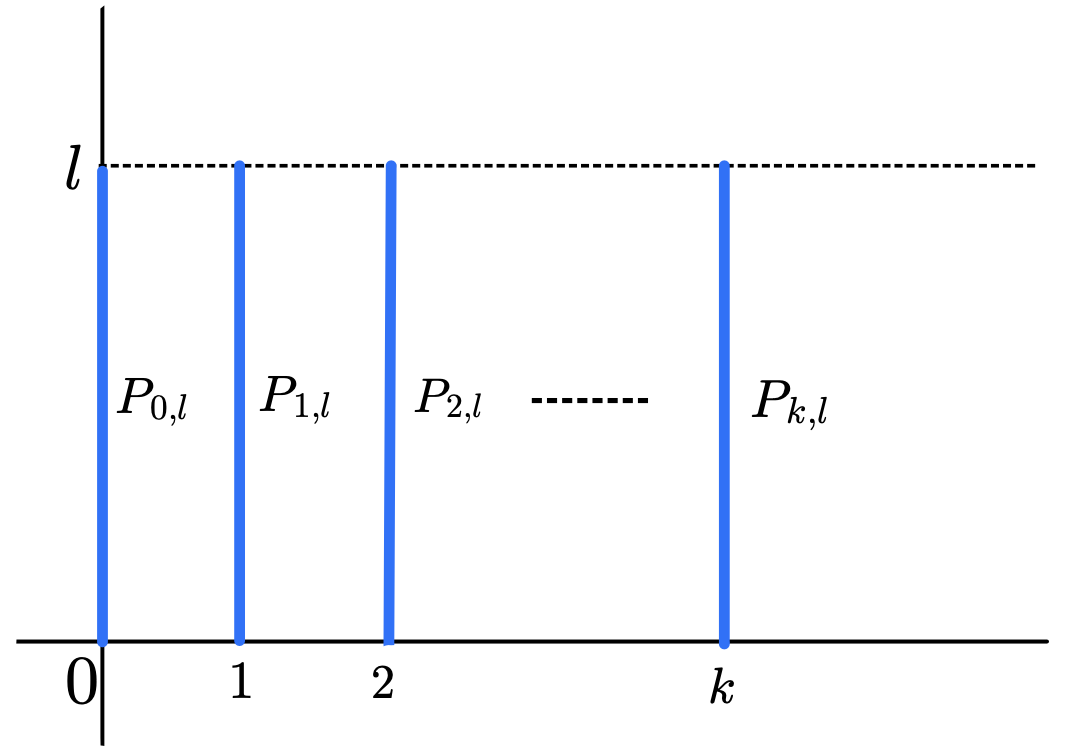}
\caption{Each $P_{k,l}$ can be represented by a line segment corresponding to the span of new monomials in the $(k+1)$st group.}
\end{figure}

Similarly,  for $k\leq n$, we fix a basis $\cBt_k$ of the space $\Rset_k[x]$ of polynomials of degree at most $k$ in $x$, which leads to an orthonormal basis $\{\pt_{k,l}^j(x,y): 0\leq j\leq k\}$  for $\sPt_{k,l;\cL}[x,y]$, and we set 
$$\Pt_{k,l}(x,y)=[\pt_{k,l}^0(x,y),\pt_{k,l}^1(x,y),\dots,\pt_{k,l}^k(x,y)]^t.$$
With these notations, the vector polynomials satisfy the recurrence relations in \eqref{1.11}, 
where $A_{k,l} = \cL (x P_{k-1,l} P^t_{k,l})$ and $B_{k,l} = \cL ( x P_{k,l} P^t_{k,l})$ are $(l+1)\times (l+1)$ matrices, and $\At_{k,l}$ and $\Bt_{k,l}$ are $(k+1)\times (k+1)$ matrices defined in a similar manner. 
The vector polynomials $P_{k,l}(x,y)$, $\Pt_{k,l}(x,y)$ and the recurrence relations \eqref{1.11} can be naturally related to the theory of matrix orthogonal polynomials which we explore in this section. Indeed, the vector polynomials $P_{k,l}(x,y)$ can be represented as 
\begin{equation}\label{4.1}
P_{k,l}(x,y)= P^l_k(x) [\beta_0(y),\beta_1(y),\dots,\beta_l(y)]^t,
\end{equation}
where $P^l_k(x)$ is a polynomial of degree $k$ in $x$, whose coefficients are $(l+1)\times (l+1)$ matrices, and the highest coefficient is a lower-triangular matrix with positive diagonal entries. Moreover, these polynomials are orthonormal with respect to the positive matrix-valued functional $\cL_{l+1}$ defined as follows
\begin{equation}\label{4.2}
\cL_{l+1}(f)=\cL\left( f(x) H_{l,\cB_l} (y)\right),
\end{equation}
where $ H_{l,\cB_l} (y)$ is an $(l+1)\times (l+1)$ matrix with entries $(H_{l,\cB_l} (y))_{i,j}=\beta_i(y)\beta_j(y)$, for $0\leq i,j\leq l$. The recurrence relation \eqref{1.11a} is equivalent to the three-term recurrence relation for these matrix polynomials. Equation \eqref{1.11b} has a similar interpretation obtained by exchanging the roles of $x$ and $y$ in the above construction. In particular, if $\cB_l=(1,y,\dots,y^l)$ and  $\cBt_k=(1,x,\dots,x^k)$ are the standard bases of $\Rset_l[y]$ and $\Rset_k[x]$, respectively, we obtain the vector polynomials defined and studied in \cite{DGIM} which can be described explicitly as follows. We order the monomials $x^i y^j$, $j\leq l$, lexicographically 
i.e. $\{1,y,\dots, y^{l},x,xy,\dots,xy^{l},x^2,\dots\}$ and the orthonormal polynomial $p_{k,l}^j(x,y)$, $0\leq j\leq l$ corresponds to the monomial $x^ky^j$ in the above ordering. Identifying the monomial $x^ky^j$ with the point $(k,j)$ in the plane, the polynomials can be constructed by applying the Gram-Schmidt process by moving on the integer points in the blue line segments in Figure~2 from bottom to top, and from left to right, see \eqref{1.10}. The orthogonality conditions characterizing the polynomial $p_{k,l}^j(x,y)$ are shown in Figure~3.
\begin{figure}[h! ]
\includegraphics[scale=0.25]{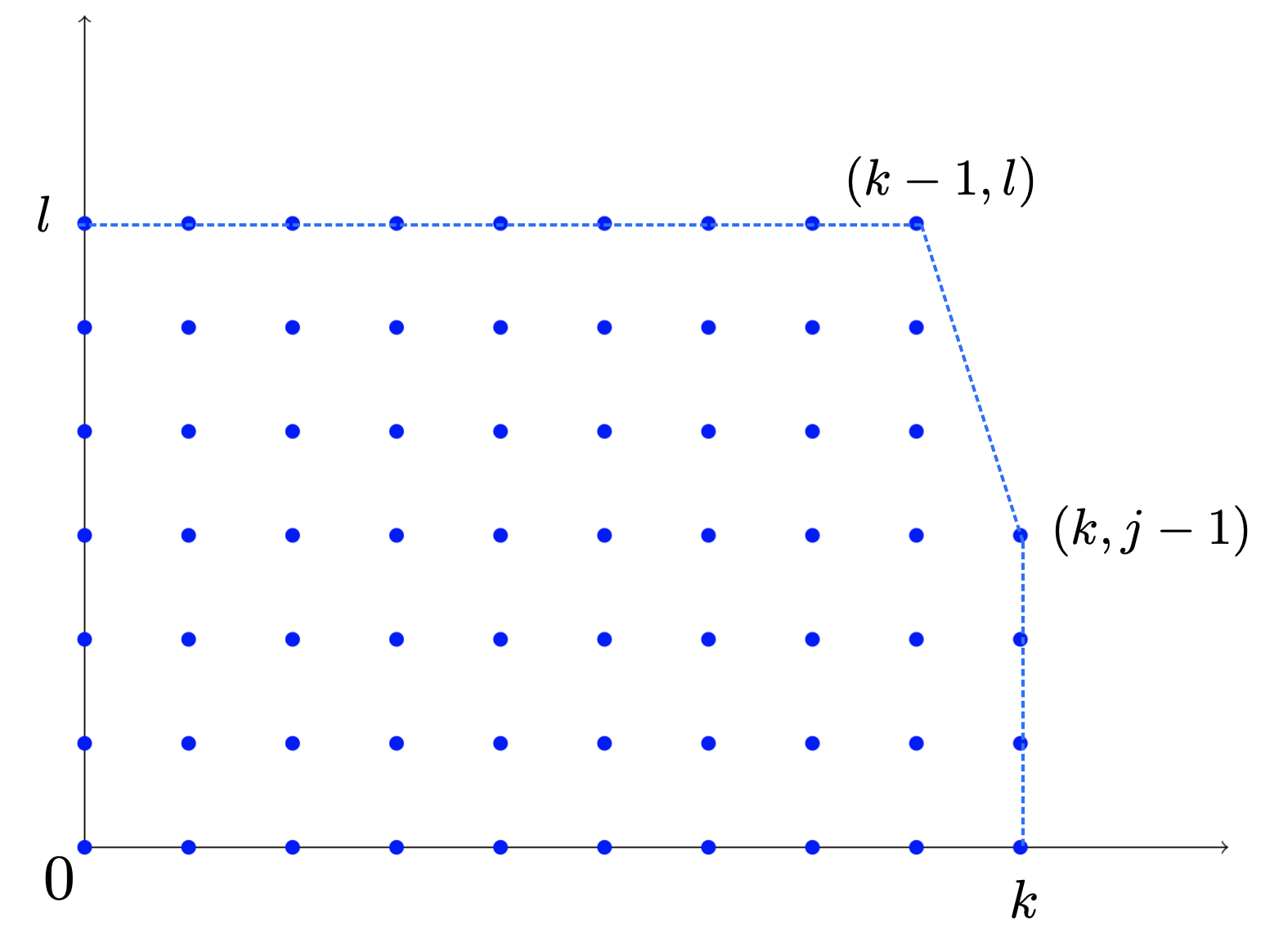}
\caption{$p_{k,l}^j(x,y) \perp x^{i}y^{s}$ where $(i,s)$ are the lattice points indicated above.}
\end{figure}

In this case $H_{l,\cB_l} (y)$ in \eqref{4.2} is a Hankel matrix.
Similarly, we can define the polynomials $\tilde p^j_{k,l}$ using reverse lexicographical ordering on the monomials in a vertical strip. Note that the construction and the orthogonality properties of these polynomials are very different from the ones obtained by the classical approach \cite{DX} based on total degree. Another important difference is that the vector polynomials $P_{k,l}(x,y)$ and $\Pt_{k,l}(x,y)$  defined above satisfy recurrence relations with square matrices, which can be used to develop scattering and inverse scattering techniques by exploring the connection with matrix-valued orthogonal polynomials. This is how the weight in \eqref{1.13} was discovered in \cite{GeI1} by applying inverse scattering techniques to a functional defined implicitly in terms of its moments. We will use a similar strategy in this section to characterize the Bernstein--Szeg\H{o} measures.

\subsection{One-sided factorization: spectral properties and characterization}\label{ss4.1}
We fix below $n,m\in\Nset_0$. We can formulate the first main result of the paper as follows.
\begin{Theorem}\label{th4.1}
\noindent
{\rm{(I)}} Suppose that for some $n_1\leq n$, the polynomials $q(x)\in\Rset_{2n_1}[x]$ and $p(x,w)\in \Rset_{n-n_1,2m}[x,w]$ are such that
\begin{enumerate}[\rm(i)] 
\item $q(x)>0$ and $p(x,w)\neq 0$ when $x\in(-1,1)$, $w\in\Dset$,
\item $\iint_{(-1,1)^2}\frac{\sqrt{1-x^2}\sqrt{1-y^2}}{q(x)p(x,w)p(x,1/w)}\,dx\,dy<\infty.$
\end{enumerate}
Then the polynomial $\cF_{n,m}(\cL;\cdot)$ defined in \eqref{1.18} for the linear functional
\begin{equation}\label{4.3}
\cL(f)=\frac{4}{\pi^2}\iint_{(-1,1)^2}f(x,y)\frac{\sqrt{1-x^2}\sqrt{1-y^2}}{q(x)p(x,w)p(x,1/w)}\,dx\,dy, 
\end{equation}
coincides with the polynomial $\cK_m(q,p;\cdot)$ constructed from $q(x)$ and $p(x,w)$ in \eqref{1.19}
\begin{align}\label{4.4}
\cF_{n,m}(\cL;\cdot)=\cK_m(q,p;\cdot),
\end{align}
and the recurrence coefficients of $\cL$  satisfy 
\begin{equation}\label{4.5}
A_{k+1,l}=\frac{1}{2}I_{l+1} \quad\text{ and }\quad B_{k,l}=0 \quad \text{ for all }\quad k \geq n,\quad  l\geq m.
\end{equation}

\noindent
{\rm{(II)}} Conversely, let $\cL: \Rset_{2n,2m}[x,y]\to\Rset$ be a positive linear functional, and suppose that
there exist polynomials  $q(x)\in\Rset_{2n_1}[x]$, $p(x,w)\in \Rset_{n-n_1,2m}[x,w]$ for some $n_1\leq n$ satisfying  \eqref{4.4}. If 
\begin{enumerate}[\rm(a)]
\item $p(x,w)\neq 0$ for $x\in (-1,1)$, $w\in(-1,1)$, and 
\item $\hat{\Psi}^m_{n}(z)=z^{n}(P^m_{n}(x)-2zA_{n,m}^tP^m_{n-1}(x))$ is invertible for $z\in(-1,1)$,
\end{enumerate}
then $q(x)$ and $p(x,w)$ satisfy conditions {\rm{(i)-(ii)}} in part {\rm{(I)}}, and \eqref{4.3} holds for all $f\in\Rset_{2n,2m}[x,y]$. In particular,  \eqref{4.3} extends $\cL$ to a positive linear functional on $\Rset[x,y]$ and \eqref{4.5} holds.
\end{Theorem}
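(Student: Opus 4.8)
The plan is to reduce everything to the matrix Bernstein--Szeg\H{o} theory of Section~\ref{se3}, applied in the $x$--variable, using the one--variable theory of Section~\ref{se2} to dispatch the $y$--integration. For Part~(I) I would fix the standard bases $\cB_l=(1,y,\dots,y^l)$, $\cBt_k=(1,x,\dots,x^k)$, so that by \eqref{4.1}--\eqref{4.2} the matrix polynomials $\{P^m_k(x)\}_k$ are orthonormal for the positive matrix--valued functional $\cL_{l+1}(f)=\cL(f(x)H_{l,\beta}(y))$, with three--term recurrence \eqref{1.11a}. By (i) the density in \eqref{4.3} is nonnegative (since $p(x,w)p(x,1/w)=|p(x,e^{i\theta})|^2$), and by (ii) it is integrable, so $\cL$ is a genuine positive measure and integrating out $y$ gives, for all $f\in\Rset[x]$,
\[
\cL_{l+1}(f)=\frac{2}{\pi}\int_{-1}^{1}f(x)\,\frac{\sqrt{1-x^2}}{q(x)}\,H_l(x)\,dx ,
\]
where $H_l(x)$ is the $(l+1)\times(l+1)$ Hankel matrix with entries $h_{i+j}(x)$, $h_j(x)=\frac{2}{\pi}\int_{-1}^{1}y^j\frac{\sqrt{1-y^2}}{p(x,w)p(x,1/w)}\,dy$. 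I would then check that $W^{(l)}(x):=q(x)H_l(x)^{-1}$ satisfies the hypotheses of Theorem~\ref{th3.6}: by (i) and (ii) (the latter forces $p(x,\cdot)$ to be nonzero on $\Tset\setminus\{\pm1\}$, else the $y$--integral diverges), $\pm p(x,\cdot)$ is the stable Fej\'er--Riesz factor of the one--dimensional weight $\frac{2}{\pi}\frac{\sqrt{1-y^2}}{p(x,w)p(x,1/w)}\chi_{(-1,1)}$, so by Corollary~\ref{co2.5} the entries of $H_l(x)^{-1}$ are homogeneous quadratic polynomials in the coefficients of $p(x,w)$, hence polynomials in $x$ of degree $\le n-n_1$; thus $W^{(l)}(x)\in\Sym(\Rset^{(l+1)\times(l+1)}_{2n}[x])$, it is positive definite on $(-1,1)$ (as $H_l(x)\succ0$ there and $q(x)>0$), and it has at most simple zeros at $x=\pm1$ (a double zero in a direction $v$ would, by Cauchy--Schwarz, make $\frac{2}{\pi}\int_{-1}^1\sqrt{1-x^2}\,v^tW^{(l)}(x)^{-1}v\,dx=v^t\cL_{l+1}(1)v$ diverge, contradicting finiteness of $\cL_{l+1}(1)$, which holds by (ii) and $|y^{i+j}|\le1$). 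Theorem~\ref{th3.6} then yields that $\hat{\Psi}^m_n(z)$ is invertible on $(-1,1)$, that $W^{(m)}(x)=\hat{\Psi}^m_n(1/z)^t\hat{\Psi}^m_n(z)$, and — since $\cL_{l+1}$ is already the unique extension to $\Rset[x]$ of the form \eqref{3.30} — that $A_{k+1,l}=\tfrac12 I_{l+1}$ and $B_{k,l}=0$ for all $k\ge n$; running this over all $l\ge m$ is exactly \eqref{4.5}.

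For the identity \eqref{4.4} I would substitute $P_{k,m}(x,y)=P^m_k(x)[\beta_0(y),\dots,\beta_m(y)]^t$ and use \eqref{3.3} together with the symmetry $(P^m_{n-1})^tA_{n,m}P^m_n=(P^m_n)^tA^t_{n,m}P^m_{n-1}$ coming from \eqref{3.8}: the right side of \eqref{4.4} then collapses to $[1,y,\dots,y^m]\,\hat{\Psi}^m_n(1/z)^t\hat{\Psi}^m_n(z)\,[1,y_1,\dots,y_1^m]^t = [1,y,\dots,y^m]\,W^{(m)}(x)\,[1,y_1,\dots,y_1^m]^t$. On the other hand, by \eqref{2.13} applied in the variable $w$ to the polynomial $q(w)=p(x,w)$ (with $k=m$, $H=H_m(x)$), the left side of \eqref{4.4} equals $q(x)\,[1,y,\dots,y^m]\,H_m(x)^{-1}\,[1,y_1,\dots,y_1^m]^t$. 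Since $W^{(m)}(x)=q(x)H_m(x)^{-1}$ by the previous paragraph, the two sides agree.

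For Part~(II): from (b) and Theorem~\ref{th3.6} ((ii)$\Rightarrow$(i)), $W_n(x):=\hat{\Psi}^m_n(1/z)^t\hat{\Psi}^m_n(z)$ is a symmetric matrix polynomial of degree $\le2n$, positive definite on $(-1,1)$ with at most simple zeros at $\pm1$, and $\cL_{m+1}(f)=\frac{2}{\pi}\int_{-1}^1 f(x)\sqrt{1-x^2}W_n(x)^{-1}dx$ on $\Rset_{2n}[x]$. Reading \eqref{4.4} as above gives $W_n(x)=q(x)B_p(x)$, where $B_p(x)$ is the Bezoutian matrix of $p(x,\cdot)$ (polynomial entries in $x$, namely the matrix of the quadratic form on the left of \eqref{4.4} divided by $q(x)$). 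Positivity of $W_n(x)$ on the connected set $(-1,1)$ forces $q(x)\ne0$ there and $B_p(x)$ to be definite of constant signature; combining this with (a) and the Bezoutian/root--location dictionary of Proposition~\ref{pr2.6} and Remarks~\ref{re2.7}--\ref{re2.8} rules out the ``anti--stable'' alternative and yields $q(x)>0$ and $B_p(x)\succ0$ on $(-1,1)$ with $\pm p(x,\cdot)$ the stable Fej\'er--Riesz factor; in particular $p(x,w)\ne0$ for $w\in\Dset$, which is (i). Then $B_p(x)^{-1}=q(x)W_n(x)^{-1}$ is the Hankel moment matrix of $\frac{2}{\pi}\frac{\sqrt{1-y^2}}{p(x,w)p(x,1/w)}\chi_{(-1,1)}$, so finiteness of the $(0,0)$--entry of $\cL_{m+1}(1)$ is precisely (ii). Finally, by Part~(I) applied to $(q,p)$ the functional $\cL'$ defined by the right side of \eqref{4.3} has the same matrix weight $W'_n(x)=q(x)B_p(x)=W_n(x)$, so $\cL'_{m+1}=\cL_{m+1}$ on $\Rset_{2n}[x]$, i.e. $\cL(f(x)y^{i+j})=\cL'(f(x)y^{i+j})$ for all $f\in\Rset_{2n}[x]$ and $0\le i,j\le m$; since $\{y^{i+j}\}$ spans $\Rset_{2m}[y]$ and the products $f(x)g(y)$ span $\Rset_{2n,2m}[x,y]$, we get $\cL=\cL'$ there, which is \eqref{4.3}, and the extension to $\Rset[x,y]$ together with \eqref{4.5} follows from Part~(I).

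The main obstacle is the branch selection in Part~(II): showing that \eqref{4.4}, (a), (b) force not only $q>0$ but also $p(x,w)\ne0$ on the \emph{whole} disk $\Dset$, not merely on the real segment $(-1,1)$ granted by (a). This is exactly where the one--variable Fej\'er--Riesz/Bezoutian analysis of Section~\ref{se2} is indispensable — how the signature of the Bezoutian of $p(x,\cdot)$ records the number of zeros inside versus outside $\Tset$, and the automatic nonvanishing of a $\hat{\Psi}$--type factor off the real axis — mirroring the role played by the nonvanishing condition \eqref{1.7} in one variable. A secondary point, but one that genuinely rests on Section~\ref{se2}, is that the entries of $H_l(x)^{-1}$ are honest polynomials in $x$ of degree $\le2(n-n_1)$ (not merely rational functions), which is what places $W^{(l)}(x)$ in $\Rset^{(l+1)\times(l+1)}_{2n}[x]$ so that Theorem~\ref{th3.6} can be applied.
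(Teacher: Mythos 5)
Your proposal follows essentially the same path as the paper's proof: in Part~(I) you integrate out $y$ to reduce $\cL$ to the matrix-valued functional $\cL_{m+1}$, identify its weight via the Hankel $H_m(x)$, invoke Theorem~\ref{th3.6} to get \eqref{4.5}, and collapse the vector identity \eqref{4.4} via \eqref{4.11}, \eqref{2.6}, \eqref{2.13}; in Part~(II) you fix $x$, read \eqref{4.4} as a Bezoutian$=$inverse-Hankel statement, recover (i)--(ii), and conclude. Two points, however, are not actually carried out.

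First, in Part~(I) you assert ``$H_l(x)\succ0$ there'' on the whole interval $(-1,1)$, but Tonelli only gives that the $y$-integral converges (and hence $H_l(x)$ is finite and positive definite) for \emph{a.e.}\ $x\in(-1,1)$. Your parenthetical ``else the $y$-integral diverges'' is not a complete argument: divergence of $\int\frac{\sqrt{1-y^2}}{|p(x_0,w)|^2}dy$ at a single $x_0$ does not, by itself, contradict (ii). The paper closes this by observing that for the a.e.\ good $x$ the integral $R(x)$ is (by \reref{re2.8}) a \emph{rational} function of $x$, then applying Fatou's lemma to show that $R$ would have an even-order pole at any bad $x_0\in(-1,1)$, which would make $\int\frac{\sqrt{1-x^2}}{q(x)}R(x)\,dx$ diverge. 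Without this (or a substitute) the hypothesis ``positive definite for $x\in(-1,1)$'' of Theorem~\ref{th3.6}(\ref{th3.6.1}) is not verified.

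Second, in Part~(II) you appeal to a ``Bezoutian/root-location dictionary of Proposition~\ref{pr2.6} and Remarks~\ref{re2.7}--\ref{re2.8}'' to rule out the anti-stable alternative and conclude $p(x,w)\ne0$ on $\Dset$. Those statements contain no root-location information: Proposition~\ref{pr2.6} only says that the Bezoutian form equals an inverse Hankel iff $H$ is Hankel, and Remark~\ref{re2.7} only gives the uniqueness of $q(w)$ given $H$; neither asserts that $q(w)$ is a \emph{stable} Fej\'er--Riesz factor. The mechanism the paper actually uses is to treat, for each fixed $x$, the Hankel $H_m(x)$ as the moment matrix of a scalar positive functional $\cL^x$ on $\Rset_{2m}[y]$, identify $\sqrt{q(x)}\,p(x,\cdot)$ with the $\hat\Psi_m(w)$ of $\cL^x$ via Remark~\ref{re2.7} and \eqref{2.16}, and then invoke Theorem~\ref{th3.6} with $l=1$ (which requires the nonvanishing on $w\in(-1,1)$, i.e., condition~(a)) together with \leref{le3.2} (nonvanishing of $\hat\Psi$ automatically extends from $(-1,1)$ to $\oDset\setminus\{\pm1\}$). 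You do gesture at this in your closing paragraph (``automatic nonvanishing of a $\hat\Psi$-type factor off the real axis''), but the body of your argument misattributes it to \seref{se2}; the precise source is \seref{se3}, and without citing it explicitly the crucial step ``in particular $p(x,w)\ne0$ for $w\in\Dset$'' is unsupported.
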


\begin{Remark}\label{re4.2}
Recall that the construction of the polynomials $P_{k,l}(x,y)$, and the recurrence coefficients $A_{k,l}$ depend on the basis $\cB_l$ of the space $\Rset_l[y]$. However,  it is easy to see that if \eqref{4.5} holds for one basis $\cB_l$ of $\Rset_l[y]$, then \eqref{4.5} holds for any basis $\hat{\cB}_l$ of $\Rset_l[y]$. Moreover, if \eqref{4.5} holds  and if $P_{k,l}(x,y)$ and $\hat{P}_{k,l}(x,y)$ denote the vectors polynomials constructed using the bases $\cB_l$ and  $\hat{\cB}_l$, respectively, then there exists an $(l+1)\times (l+1)$ orthogonal matrix $U_l$ such that $\hat{P}_{k,l}(x,y)=U_l P_{k,l}(x,y)$ for all $k\geq n$.
\end{Remark}

\begin{Remark}\label{re4.3}
We can naturally extend the isomorphism \eqref{2.14} to an isomorphism 
\begin{align}
\Rset_{n-n_1,m}[x,y]\times \Rset_{n-n_1,m-1}[x,y]&\to \Rset_{n-n_1,2m}[x,w]\nonumber \\
(p_{m}(y;x),p_{m-1}(y;x)) &\to p(x,w)=w^{m}(p_{m}(y;x)-wp_{m-1}(y;x)), \label{4.6}
\end{align}
with inverse given by
\begin{equation}\label{4.7}
p_j(y;x) = \frac{w^{j+1}p(x,1/w) - w^{-(j+1)}p(x,w)}{w-\frac 1w}, \text{ for } j=m \text{ and }j=m-1.
\end{equation}
Using this correspondence and the arguments in \prref{pr2.6}, we see that there exists  $p(x,w)\in \Rset_{n-n_1,2m}[x,w]$ satisfying \eqref{4.4} if and only if there exist $p_{m}(y;x)\in \Rset_{n-n_1,m}[x,y]$ and $p_{m-1}(y;x)\in \Rset_{n-n_1,m-1}[x,y]$, satisfying
\begin{align}\label{4.8}
&\cF_{n,m}(\cL;x,y,y_1)=\nonumber\\
&\quad q(x)\left(\frac{p_m(y_1;x)p_{m-1}(y;x)-p_m(y;x)p_{m-1}(y_1;x)}{2(y_1-y)}
+p_m(y;x) p_m(y_1;x)\right).
\end{align}
Moreover, if the last equation holds, then $\sqrt{q(x)}p_m(y;x)$ and $\sqrt{q(x)}p_{m-1}(y;x)$ are uniquely determined from the left-hand side of \eqref{4.8}, up to a simultaneous sign change, and can be computed from the left-hand side following the steps in \reref{re2.7}. This shows that if $q(x)$ and $p(x,w)$ satisfy \eqref{4.4}, then $\sqrt{q(x)}p(x,w)$ is uniquely determined from the left-hand side, up to a sign. Clearly, if $p(x,w)$ has a factor $r(x)$ depending only on $x$ and $p(x,w)=r(x)p_1(x,w)$, then we can replace the pair $(q(x),p(x,w))$ with $(q(x)r^2(x),p_1(x,w))$, and up to such transformations $q(x)$ and $p(x,w)$ are uniquely determined from the left-hand side of \eqref{4.4}. If we want to have uniqueness, we can normalize $p(x,w)$ so that it does not have nonconstant factors depending only on $x$, then to fix the constant multiples, we can add $q(0)=1$, and finally, we can fix the sign of $p(x,w)$ by asking that $p(0,0)>0$. With this normalization, $q(x)$ and $p(x,w)$ will be uniquely determined from \eqref{4.4}. 
\end{Remark}

\begin{proof}[Proof of \thref{th4.1}, Part (I)]
We show first that if (i) and (ii) hold, then 
\begin{equation}\label{4.9}
\int_{(-1,1)}\frac{\sqrt{1-y^2}}{p(x,w)p(x,1/w)}\,dy<\infty  
\end{equation}
for all $x\in(-1,1)$. Indeed, from Tonelli's theorem we know that \eqref{4.9} holds for a.e. $x\in(-1,1)$. Note that if we fix such $x$, then $p(x,w)\neq 0$ for $w\in \oDset\setminus\{\pm 1\}$, and $p(x,w)$ can have only simple zeros at $w=\pm 1$. This means that $R(x)= \int_{(-1,1)}\frac{\sqrt{1-y^2}}{p(x,w)p(x,1/w)}\,dy$ is a rational function for a.e. $x\in(-1,1)$ for which \eqref{4.9} holds, see \reref{re2.8}. If $x_0\in(-1,1)$ is such that $\int_{(-1,1)}\frac{\sqrt{1-y^2}}{p(x_0,w)p(x_0,1/w)}\,dy=\infty $, then  by Fatou's lemma we conclude that $\lim_{x\to x_0}R(x)=\infty$. This implies that $R(x)$ has a pole of even order at $x_0$, hence  $\int_{(-1,1)}\frac{\sqrt{1-x^2}}{q(x)} R(x)\,dx=\infty$, which contradicts (ii).

We fix now $x\in(-1,1)$, and applying \leref{le2.3} we see that the right-hand side of \eqref{4.8} is equal to the reproducing kernel $K^x_m(y,y_1)$ for the measure $d\mu_x(y)=\frac{2\sqrt{1-y^2}}{\pi | \sqrt{q(x)} p(x,w)|^2}\chi_{(-1,1)}(y)dy$. We can also compute $K^x_m(y,y_1)$ using formula \eqref{2.6}, where 
\begin{equation}\label{4.10}
H_m(x)=
\frac{2}{\pi} \int_{(-1,1)}
\left[
\begin{matrix}
 1& y & \adots & y^{m}\\
y & y^{2}& \adots & \\[4pt]
\adots & \adots & & \adots \\[4pt]
y^{m} &  & \adots & y^{2m}
\end{matrix}
\right]\frac{\sqrt{1-y^2}} {|\sqrt{q(x)} p(x,w)|^2} dy.
\end{equation}

Clearly, $H_m(x)$ is a positive definite matrix for every $x\in(-1,1)$ and from \coref{co2.5} we conclude that $H_m(x)^{-1}$ is a matrix polynomial in $x$ of degree at most $2n$. If we use the standard basis $\cB_m=(1,y,\dots,y^m)$ of $\Rset_m[y]$, then the vector polynomials $P_{j,m}(x,y)$ can be represented as 
\begin{equation}\label{4.11}
P_{j,m}(x,y)= P^m_j(x) [1,y,\dots,y^m]^t,
\end{equation}
where $\{P^m_j(x)\}$ are the matrix orthogonal polynomials associated with the positive matrix functional $\cL_{m+1}: \Rset[x]\to \Sym(\Rset^{(m+1)\times (m+1)})$ defined as follows
\begin{equation}\label{4.12}
\cL_{m+1}(f)=\cL\left( f(x) \left[
\begin{matrix}
 1& y & \adots & y^{m}\\
y & y^{2}& \adots & \\[4pt]
\adots & \adots & & \adots \\[4pt]
y^{m} &  & \adots & y^{2m}
\end{matrix}
\right]  \right).
\end{equation}
This combined with \eqref{4.3} and \eqref{4.10} shows that 
$$\cL_{m+1}(f)= \frac{2}{\pi} \int_{(-1,1)} f(x)  \sqrt{1-x^2} H_m(x) dx.$$
From \thref{th3.6} it follows that \eqref{4.5} holds and that $H_m(x)^{-1} = \Psi^m_n(1/z)^t \Psi^m_n(z)$. The latter combined with \eqref{4.11} and \eqref{2.6} shows that the  reproducing kernel $K^x_m(y,y_1)$ is equal to the left-hand side of \eqref{4.8}, thus establishing \eqref{4.4}, and completing the proof of Part (I). \qed\\

\noindent
{\it Proof of  \thref{th4.1}, Part (II).} Starting with $\cL$, we define the matrix orthogonal polynomials $\{P^m_j(x)\}_{j=0,\dots,n}$ associated with the positive matrix functional $\cL_{m+1}: \Rset_{2n}[x]\to \Sym(\Rset^{(m+1)\times (m+1)})$ in \eqref{4.12} and related to the vector polynomials by \eqref{4.11}. If we set $H_m(x) = [\hat{\Psi}^m_n(1/z)^t \hat{\Psi}^m_n(z)]^{-1}$, then from condition (b) and  \thref{th3.6} we see  that $H_m(x)$ is positive definite for $x\in(-1,1)$ and
\begin{equation} \label{4.13}
\cL_{m+1}(f)=\frac{2}{\pi}\int_{(-1,1)}f(x)\sqrt{1-x^2}\,H_m(x)\,dx, \text{ for all }f\in\Rset_{2n}[x].
\end{equation}
Moreover, 
$$\cF_{n,m}(\cL;x,y,y_1)= [1,y,\dots ,y^{m}]\,H_m(x)^{-1} [1,y_1,\dots ,y_1^{m}]^t.$$
Therefore, for fixed $x\in(-1,1)$, equation \eqref{4.4} and \prref{pr2.6} tell us that $H_m(x)$ is a Hankel matrix, i.e. 
\begin{equation}\label{4.14}
H_m(x)=
\left[
\begin{matrix}
h_{0}(x)& h_{1}(x) & \adots & h_{m}(x)\\
h_{1}(x) & h_{2}(x)& \adots & \\[4pt]
\adots & \adots & & \adots \\[4pt]
h_{m}(x) &  & \adots & h_{2m}(x)
\end{matrix}
\right].
\end{equation}
Thus, for fixed $x\in(-1,1)$, we can think of $H_m(x)$ as the moment matrix of a positive linear functional $\cL^x:\Rset_{2m}[y]\to \Rset$, and \eqref{4.8} is its reproducing kernel $K_m^{x}(y,y_1)$. If $p_{m,m}(x)$  denotes the coefficient of $y^m$ in $p_m(y;x)$, then comparing the coefficients of $y^my_1^m$  on both sides of equation \eqref{4.8} we obtain
$$q(x)p_{m,m}^2(x) =[0,0,\dots,1] H_m(x)^{-1} [0,0,\dots, 1]^t>0\quad \text{ for }x\in(-1,1),$$
establishing the first condition in (i).
Using \thref{th3.6} for the functional $\cL^x$ (note that $l=1$ here, and $z$ and $\hat{\Psi}_N(z)$ there are replaced by $w$ and $\sqrt{q(x)}p(x,w)$, respectively, in view of \reref{re2.7} and \eqref{2.16}) and condition (a)
 we conclude that 
 \begin{equation} \label{4.15}
\cL^x(g)=\frac{2}{\pi}\int_{(-1,1)}\frac{\sqrt{1-y^2}}{q(x)p(x,w)p(x,1/w)}g(y)\,dy, \text{ for all }g\in\Rset_{2m}[y].
\end{equation}
Moreover, for every fixed $x\in (-1,1)$, \leref{le3.2} tells us that $p(x,w)\neq 0$ when $w\in \oDset\setminus\{\pm1\}$, thus establishing condition (i) in Part (I).
If we take $g(y)=y^j$ in \eqref{4.15} we see that 
 \begin{equation} \label{4.16}
h_j(x)=\cL^x(y^j)=\frac{2}{\pi}\int_{(-1,1)}\frac{\sqrt{1-y^2}}{q(x)p(x,w)p(x,1/w)} y^j\,dy, \text{ for }j=0,\dots,2m.
\end{equation}
Using now equations \eqref{4.12}-\eqref{4.13} with $f(x)=x^k$, $k=0,1,\dots,2n$, and then \eqref{4.14} and \eqref{4.16}, we see that 
\begin{align}
\cL(x^ky^j)&= \frac{2}{\pi}\int_{(-1,1)}x^k \sqrt{1-x^2}\,h_j(x)\,dx  \nonumber \\
&  =\frac{4}{\pi^2}\int_{(-1,1)}\left( \int_{(-1,1)}x^ky^j\,\frac{\sqrt{1-x^2} \sqrt{1-y^2}}{q(x)p(x,w)p(x,1/w)}\,dy \right)\,dx,\label{4.17}
\end{align}
for all $k=0,1,\dots, 2n$ and $j=0,1,\dots,2m$. In particular, if we take $k=j=0$ in the last formula and by Tonelli's theorem we deduce that  $\iint_{(-1,1)^2}\frac{\sqrt{1-x^2}\sqrt{1-y^2}}{q(x)p(x,w)p(x,1/w)}\,dx\,dy=\frac{\pi^2}{4}\cL(1)<\infty$, which yields condition (ii) in Part (I). Using now equation \eqref{4.17}, Fubini's theorem and the linearity of $\cL$ we see that \eqref{4.3} holds.
\end{proof}

\begin{Remark}\label{re4.4}
Note that if either conditions (i)-(ii) in Part (I) of \thref{th4.1} hold, or conditions (a)-(b) in Part (II) hold, then the proof shows that:
\begin{itemize}
\item $p(x,w)\neq 0$ for $x\in(-1,1)\times \oDset\setminus\{\pm1\}$, and  
\item $\hat{\Psi}^m_{n}(z)$ is invertible for all $z\in \oDset\setminus\{\pm1\}$. 
\end{itemize}
In the next lemma, we show that these conditions also imply that $p(x,w)\neq 0$ when $(x,w)\in \{\pm1 \}\times \Dset$. This means that when we look at the closure $[-1,1]\times \oDset$, the polynomial $p(x,w)$ can only vanish on  $\{\pm1 \}\times \Tset$ and $(-1,1)\times \{\pm 1\}$. There are examples where $p(x,w)$ vanishes on these parts of the boundary and  \thref{th4.1} can be applied with appropriate polynomials $q(x)$. For instance:\\
$\bullet$ $p(x,w)=2+x+w^2$ vanishes when $(x,w)=(-1,\pm i)$, and 
$$\frac{2}{\pi}\int_{-1}^{1}\frac{\sqrt{1-y^2}}{p(x,w)p(x,1/w)}\,dy=\frac{1}{(2+x)(1+x)},$$
by Example~\ref{Ex2.10}, hence \thref{th4.1} can be applied if $q(x)>0$ for $x\in[-1,1)$, possibly having a simple zero at $x=1$.\\
$\bullet$ $p(x,w)=(x-x_0)^2+1+w$ vanishes when $(x,w)=(x_0,-1)$, and 
$$\frac{2}{\pi}\int_{-1}^{1}\frac{\sqrt{1-y^2}}{p(x,w)p(x,1/w)}\,dy=\frac{1}{\big((x-x_0)^2+1\big)^2},$$
by Example~\ref{Ex2.10}, hence \thref{th4.1} can be applied if $q(x)>0$ for $x\in(-1,1)$, possibly having simple zeros at $x=\pm1$.
\end{Remark}

\begin{Lemma}\label{le4.5}
Suppose that $p(x,w)\in \Rset[x,w]$ is such that
\begin{enumerate} [\rm(1)] 
\item $p(x,w)\neq 0$ when $(x,w)\in(-1,1)\times\Dset$,
\item $\iint_{(-1,1)^2}\frac{\sqrt{1-x^2}\sqrt{1-y^2}}{p(x,w)p(x,1/w)}\,dx\,dy<\infty$.
\end{enumerate}
Then 
$$p(x,w)\neq 0 \quad \text{for }\quad (x,w) \in \big((-1,1)\times\oDset\setminus\{\pm1\}\big)\cup \big(\{\pm1 \}\times \Dset\big).$$
\end{Lemma}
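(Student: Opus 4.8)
The plan is to split the conclusion into the part that is already available and the genuinely new part. Applying the first paragraph of the proof of \thref{th4.1}(I) with $q(x)\equiv1$ (so that conditions (i)--(ii) there reduce to hypotheses (1)--(2) here), one gets that the one-variable integral $\frac{2}{\pi}\int_{-1}^1\frac{\sqrt{1-y^2}}{p(x,w)p(x,1/w)}\,dy$ is finite for \emph{every} $x\in(-1,1)$; the one-variable Bernstein--Szeg\H{o} theory of \seref{se2} then forces $p(x,\cdot)$ to be nonvanishing on $\oDset\setminus\{\pm1\}$ for every such $x$ (a zero there would make that $y$-integrand non-integrable). This is also recorded as the first item of \reref{re4.4}, and it gives $p(x,w)\ne0$ on $(-1,1)\times(\oDset\setminus\{\pm1\})$. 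So all that remains is to prove $p(\pm1,w)\ne0$ for $w\in\Dset$; I would treat $x=1$, the case $x=-1$ being symmetric.

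For this I would argue by contradiction. Suppose $p(1,w_*)=0$ for some $w_*\in\Dset$, and write $p(x,w)=(x-1)^{a}g(x,w)$ with $a\ge0$ chosen maximal, so that $g(1,\cdot)\not\equiv0$ in $\Rset[w]$; I claim each of the cases $a=0$ and $a\ge1$ is impossible. If $a=0$, then $w_*$ is an isolated zero of the nonzero polynomial $p(1,\cdot)=g(1,\cdot)$; choosing $r>0$ with $\overline{D(w_*,r)}\subset\Dset$ and with $p(1,\cdot)$ nonvanishing on $|w-w_*|=r$, the uniform continuity of $p$ on a compact neighborhood of $(1,w_*)$ gives $|p(x,w)-p(1,w)|<\min_{|w-w_*|=r}|p(1,w)|$ on that circle for all $x$ near $1$, so Rouch\'e's theorem produces a zero of $p(x,\cdot)$ inside $D(w_*,r)\subset\Dset$; choosing such an $x\in(-1,1)$ contradicts (1). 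If $a\ge1$, I would use (2) instead: on a small box $B=(1-\delta,1)\times(y_0-\delta,y_0+\delta)\subset(-1,1)^2$ (with $y_0\in(-1,1)$ fixed and $\delta>0$ small), writing $w=e^{i\theta}$, one has
\[
p(x,w)\,p(x,1/w)=\bigl|p(x,e^{i\theta})\bigr|^{2}=(1-x)^{2a}\bigl|g(x,e^{i\theta})\bigr|^{2}\le C(1-x)^{2a}
\]
with $|g(x,e^{i\theta})|^2$ bounded on the relevant compact set; since also $\sqrt{1-y^2}\ge m>0$ on $B$, Tonelli's theorem gives
\[
\iint_{(-1,1)^2}\frac{\sqrt{1-x^2}\,\sqrt{1-y^2}}{p(x,w)\,p(x,1/w)}\,dx\,dy\ \ge\ c\int_{1-\delta}^{1}\frac{\sqrt{1-x^2}}{(1-x)^{2a}}\,dx\ \ge\ c\int_{1-\delta}^{1}(1-x)^{\tfrac12-2a}\,dx\ =\ +\infty,
\]
since $\tfrac12-2a\le-\tfrac32$; this contradicts (2). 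In either case we have a contradiction, so $p(1,\cdot)$ has no zero in $\Dset$.

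The hard part, to the extent there is one, lives in the first step rather than in the dichotomy above: passing from ``the $y$-integral is finite for a.e.\ $x$'' (all Tonelli gives directly) to ``for every $x\in(-1,1)$'' rests on the rationality in $x$ of the one-variable moment and the even order of its possible poles (\reref{re2.8}), combined with lower semicontinuity via Fatou --- but all of that is already packaged in \reref{re4.4}, so I would just cite it. The one point needing care in my own argument is the degenerate subcase $p(1,\cdot)\equiv0$, which the factor $(x-1)^{a}$ isolates and which is then excluded by the non-integrability of $(1-x)^{1/2-2a}$ at $x=1$.
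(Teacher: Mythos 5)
Your proof is correct and follows the paper's route: both first cite the proof of \thref{th4.1} (as recorded in \reref{re4.4}) to dispose of $(-1,1)\times(\oDset\setminus\{\pm1\})$, and both then derive a contradiction from a hypothetical zero of $p$ at $(\pm 1,w_*)$ with $w_*\in\Dset$. The only packaging difference is that the paper applies Hurwitz's theorem directly to the family $w\mapsto p(x_k,w)$, $x_k\to\pm 1$, to conclude $p(\pm 1,\cdot)\equiv 0$ (hence $(x\mp1)$ divides $p$, contradicting (2)), whereas you unpack this into the dichotomy $a=0$ vs.\ $a\ge 1$, ruling out $a=0$ via Rouch\'e (which is precisely what Hurwitz's theorem encapsulates) and $a\ge 1$ via the explicit divergence of $\int(1-x)^{1/2-2a}\,dx$, a computation the paper leaves implicit when asserting that condition (2) cannot hold.
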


\begin{proof}
The proof of  \thref{th4.1} shows that $p(x,w)\neq 0$ when $x\in (-1,1)$ and $w\in \oDset\setminus\{\pm1\}$, so we need to show that this is also true when $(x,w)\in \{\pm1 \}\times \Dset$. Suppose that $p(x_0,w_0)=0$ for some $x_0\in\{\pm 1\}$ and $w_0\in \Dset$. For $k\in\Nset$, set $x_k=x_0-x_0/k\in(-1,1)$ and consider the sequence of polynomials $f_k(w)=p(x_k,w)$ which do not vanish on $\Dset$. Since $f_k(w)$ converges to $f(w)=p(x_0,w)$ uniformly on $\Dset$ and $f(w_0)=0$, Hurwitz's theorem tells us that $f(w)=p(x_0,w)$ must be identically equal to $0$.  This means that $(x-x_0)$ divides $p(x,w)$ and therefore condition (2) cannot hold.
\end{proof}

As we explained in \reref{re4.3}, the polynomial $q(x)p(x,w)p(x,1/w)$ in the denominator of \eqref{4.3} is uniquely determined from $\cL$. More generally, the lemma below shows that there exists at most one polynomial $Q(x,y)\in\Rset_{2n,2m}[x,y]$ such that $\cL(f)=\frac{4}{\pi^2}\iint_{(-1,1)^2}f(x,y)\frac{\sqrt{1-x^2}\sqrt{1-y^2}}{Q(x,y)}\,dx\,dy$.

\begin{Lemma}\label{le4.6}
Let $Q_1(x,y), Q_2(x,y)\in\Rset_{2n,2m}[x,y]$ be positive for $(x,y)\in(-1,1)^2$ and such that $\iint_{(-1,1)^2}\frac{\sqrt{1-x^2}\sqrt{1-y^2}}{Q_j(x,y)}\,dx\,dy<\infty$ for $j=1$ and $j=2$. If 
\begin{equation}\label{4.18}
\iint_{(-1,1)^2}f(x,y)\frac{\sqrt{1-x^2}\sqrt{1-y^2}}{Q_1(x,y)}dx\,dy=
\iint_{(-1,1)^2}f(x,y)\frac{\sqrt{1-x^2}\sqrt{1-y^2}}{Q_2(x,y)}dx\,dy
\end{equation}
for all $f(x,y)\in\Rset_{2n,2m}[x,y]$, then  $Q_1(x,y)= Q_2(x,y)$.
\end{Lemma}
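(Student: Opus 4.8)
The plan is to test the identity \eqref{4.18} against $f=Q_2-Q_1$ itself. This is legitimate because $Q_1,Q_2\in\Rset_{2n,2m}[x,y]$ forces $Q_2-Q_1\in\Rset_{2n,2m}[x,y]$, so $f$ is an admissible test function. Moreover $Q_2-Q_1$ is a polynomial, hence bounded on the square $(-1,1)^2$, say $|Q_2-Q_1|\le C$ there; consequently each side of \eqref{4.18} with this choice of $f$ is dominated in absolute value by $C\iint_{(-1,1)^2}\frac{\sqrt{1-x^2}\sqrt{1-y^2}}{Q_j(x,y)}\,dx\,dy$, which is finite by hypothesis. Thus both integrals in \eqref{4.18} converge absolutely, and we may subtract them and combine the two integrands into one.

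Carrying this out, and using $\frac{1}{Q_1}-\frac{1}{Q_2}=\frac{Q_2-Q_1}{Q_1Q_2}$, the identity \eqref{4.18} with $f=Q_2-Q_1$ should reduce to
\[
\iint_{(-1,1)^2}\frac{(Q_2-Q_1)^2}{Q_1Q_2}\,\sqrt{1-x^2}\sqrt{1-y^2}\,dx\,dy=0 .
\]
Since $Q_1,Q_2>0$ on $(-1,1)^2$, the integrand here is nonnegative, so a nonnegative function with vanishing integral must be zero almost everywhere. As the factor $\sqrt{1-x^2}\sqrt{1-y^2}/(Q_1Q_2)$ is strictly positive a.e.\ on the open square, this forces $(Q_2-Q_1)^2=0$ on a set of full Lebesgue measure in $(-1,1)^2$. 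Finally, a polynomial vanishing on a set of positive Lebesgue measure is identically zero, so $Q_2-Q_1\equiv 0$, i.e.\ $Q_1=Q_2$.

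I do not anticipate a genuine obstacle in this argument; it is essentially immediate once the right test function is used. The one point that deserves a word of care is the passage from \eqref{4.18} to the single combined integral displayed above, i.e.\ the use of linearity of the Lebesgue integral to write the difference of two integrals as the integral of the difference. This is exactly where the boundedness of the polynomial $Q_2-Q_1$ on $(-1,1)^2$ and the finiteness hypothesis $\iint_{(-1,1)^2}\frac{\sqrt{1-x^2}\sqrt{1-y^2}}{Q_j}\,dx\,dy<\infty$ are invoked, guaranteeing that both integrands lie in $L^1$.
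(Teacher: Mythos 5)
Your proof is correct. You take a slightly different route from the paper's: the paper tests \eqref{4.18} against $f=Q_1$ and $f=Q_2$ separately to obtain $\iint (Q_2/Q_1)\,d\nu = \iint (Q_1/Q_2)\,d\nu = 1$ (where $d\nu$ is the normalized Chebyshev measure) and then invokes the equality case of the Cauchy--Schwarz inequality; you instead test against the single function $f=Q_2-Q_1$ and exploit the algebraic identity $(Q_2-Q_1)\left(\tfrac{1}{Q_1}-\tfrac{1}{Q_2}\right)=\tfrac{(Q_2-Q_1)^2}{Q_1Q_2}$, yielding a nonnegative integrand with vanishing integral. The underlying phenomenon is the same (your square is essentially what makes Cauchy--Schwarz sharp), but your version is more elementary in that it avoids invoking an inequality and its equality case, and your preliminary remarks justifying the use of linearity of the integral are a clean touch. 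Both arguments correctly conclude by noting that a bivariate polynomial vanishing a.e.\ on $(-1,1)^2$ is identically zero.
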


\begin{proof}
Set  $d\nu(x,y)=\frac{4}{\pi^2}\sqrt{1-x^2}\sqrt{1-y^2}\chi_{(-1,1)^2}\,dx\,dy,$
and note that \eqref{4.18} implies that 
\begin{equation*}
\iint_{(-1,1)^2}\frac{Q_2(x,y)}{Q_1(x,y)}d\nu(x,y) =
\iint_{(-1,1)^2}\frac{Q_1(x,y)}{Q_2(x,y)}d\nu(x,y)=1.
\end{equation*}
Applying the Cauchy-Schwarz inequality in the space $L^2(\nu)$,
we see that
\begin{align*}
1&=\left(\iint_{(-1,1)^2}\sqrt{\frac{Q_2(x,y)}{Q_1(x,y)}}\sqrt{\frac{Q_1(x,y)}{Q_2(x,y)}}d\nu(x,y)\right)^2\\
&\leq \iint_{(-1,1)^2}\frac{Q_2(x,y)}{Q_1(x,y)}d\nu(x,y)
\iint_{(-1,1)^2}\frac{Q_1(x,y)}{Q_2(x,y)}d\nu(x,y)=1.
\end{align*}
Thus, the polynomials $Q_1(x,y)$ and $Q_2(x,y)$ must be equal a.e. on $(-1,1)^2$, and therefore they coincide. 
\end{proof}

\begin{Remark}\label{re4.7}
Equation \eqref{4.5} in \thref{th4.1} tells us that for every fixed $M\geq m$, we can construct orthonormal bases of the spaces $\sP_{N,M;\cL}[x,y]$ for the functional $\cL$ in \eqref{4.3} satisfying a ``Chebyshev relation" in $N$ when $N\geq n$. With the notations in the theorem and using the one-dimensional theory outlined in \seref{se2}, it follows that if $\{U^q_j(x)\}_{j\in\Nset_0}$ are the orthonormal polynomials with respect to the measure $\frac{2}{\pi}\frac{\sqrt{1-x^2}}{q(x)}\chi_{(-1,1)}(x)dx$ on $\Rset$ and if we set 
\begin{equation}\label{4.19}
p_M(y;x)=\frac{w^{M+1} p (x,1/w)- w^{-M-1} p (x,w)}{w-1/w},
\end{equation}
then $p_M(y;x)\in\Rset_{n-n_1,M}[x,y]$ and the polynomials $\{p_M(y;x)U^q_j(x)\}_{j=0}^{N-(n-n_1)}$ are orthonormal elements of $\sPt_{N,M;\cL}[x,y]$ when $N\geq n$, $M\geq m$. \\
(i) We explain in \seref{se6} how this orthonormal set can be extended to an orthonormal basis of $\sPt_{N,M;\cL}[x,y]$ which also satisfies a Chebyshev relation in $N$ in the case when $p(x,w)$ does not vanish on $[-1,1]\times \oDset$ using an appropriate basis of spaces associated with Bernstein--Szeg\H{o} measures on the torus $\Tset^2$, see \thref{th6.2}, \reref{re6.3}(ii) and \coref{co6.4}.\\
(ii) Note that the coefficient of $y^M$ in $p_M(y;x)$ is $2^Mp(x,0)$. This means that the basis constructed in (i) above can be characterized as follows. We start with the set $(p(x,0),xp(x,0),\dots, x^{N-(n-n_1)}p(x,0))$ in $\Rset_N[x]$ and we complete it to a basis $\cBt_N$ of $\Rset_N[x]$. Then the basis of $\sPt_{N,M;\cL}[x,y]$ in (i) corresponds to the orthonormal polynomials built using $\cBt_N$ as explained in Section~\ref{ss1.2}. In particular, if $p(x,0)$ is a positive constant, we can take $\cBt_N$ to be the standard basis $(1,x,\dots,x^N)$ of $\Rset_N[x]$.
\end{Remark}

\begin{Remark}\label{re4.8}
The explicit examples in \seref{se7} show that equation \eqref{4.4} does not imply conditions (a) and (b) in \thref{th4.1}(II).
\end{Remark}

\begin{Remark}\label{re4.9}
Clearly, we can reverse the roles of $x$ and $y$ in \thref{th4.1} and consider functionals of the form
\begin{equation}\label{4.20}
\cL(f)=\frac{4}{\pi^2}\iint_{(-1,1)^2}f(x,y)\frac{\sqrt{1-x^2}\sqrt{1-y^2}}{q(y)\ph(z,y)\ph(1/z,y)}\,dx\,dy, 
\end{equation}
where $q(y)\in\Rset_{2m_1}[y]$ and $\ph(z,y)\in \Rset_{2n,m-m_1}[z,y]$ are such that
\begin{enumerate}[\rm(i)] 
\item $q(y)>0$ and $\ph(z,y)\neq 0$ when $(z,y)\in \Dset \times (-1,1)$,
\item $\iint_{(-1,1)^2}\frac{\sqrt{1-x^2}\sqrt{1-y^2}}{q(y)\ph(z,y)\ph(1/z,y)}\,dx\,dy<\infty.$
\end{enumerate}
If we define polynomials in three variables analogous to \eqref{1.18}-\eqref{1.19} by reversing the roles of $x$ and $y$
\begin{align}\label{4.21}
\cFt_{k,l}(\cL;x,x_1,y)=& \Pt_{k,l}(x,y)^t \Pt_{k,l}(x_1,y)-4y\Pt_{k,l}(x,y)^t\At^t_{k,l}\Pt_{k,l-1}(x_1,y)\nonumber\\
&\qquad\qquad\qquad+4\Pt_{k,l-1}(x,y)^t\At_{k,l}\At^t_{k,l}\Pt_{k,l-1}(x_1,y),
\end{align}
and
\begin{align}\label{4.22}
&\cKt_{n}(q,\ph;x,x_1,y) = \nonumber\\
&\quad\frac{q(y)}{(z-1/z)(z_1-1/z_1)}\Big(\frac{(zz_1)^{-n-1}\ph(z,y)\ph(z_1,y)-(zz_1)^{n+2}\ph(1/z,y)\ph(1/z_1,y)}{1-zz_1} \nonumber\\
&\qquad+\frac{(z_1/z)^{n+1}z_1\ph(z,y)\ph(1/z_1,y)-(z/z_1)^{n+1}z\ph(1/z,y)\ph(z_1,y)}{z-z_1}\Big),
\end{align}
then we can formulate an analog of \thref{th4.1} by replacing \eqref{4.4} with 
\begin{align}\label{4.23}
\cFt_{n,m}(\cL;\cdot)=\cKt_n(q,\ph;\cdot),
\end{align}
and \eqref{4.5} by its tilde analog 
\begin{equation}\label{4.24}
\At_{k,l+1}=\frac{1}{2}I_{k+1} \quad\text{ and }\quad \Bt_{k,l}=0 \quad \text{ for all }\quad k \geq n,\quad  l\geq m.
\end{equation}
\end{Remark}
\subsection{Bernstein--Szeg\H{o} measures: spectral properties and characterization}\label{ss4.2}
For functionals which admit both representations \eqref{4.3} and \eqref{4.20}, the recurrence coefficients on both sides will satisfy equations \eqref{4.5} and \eqref{4.24}. Before we formulate the precise statement, we prove a lemma which tells that the inverse of the corresponding weights can only vanish at the four points $(\pm 1,\pm 1)$ on the boundary of $\Dset^2$.

\begin{Lemma}\label{le4.10}
Let $\omega(z,w)\in\Rset[z,w]$ be such that $\omega(z,w)\neq 0$ for $(z,w)\in\Dset^2$ and 
\begin{equation}\label{4.25}
\iint_{(-1,1)^2}\frac{\sqrt{1-x^2}\sqrt{1-y^2}}{\omega(z,w)\omega(1/z,w)\omega(z,1/w)\omega(1/z,1/w)}\,dx\,dy<\infty.
\end{equation}
Then 
\begin{equation}\label{4.26}
\omega(z,w)\neq 0\quad \text{ for }\quad 
 (z,w)\in\oDset^2\setminus \{(-1,-1),(-1,1),(1,-1),(1,1)\}.
 \end{equation}
\end{Lemma}

\begin{proof}
Similarly to the proof of \leref{le4.5}, we can use Hurwitz's theorem to deduce that $\omega(z,w)$ cannot vanish on $(\Dset\times \Tset)\cup (\Tset\times\Dset)$, unless it has a factor depending on just one of the variables vanishing on $\Tset$, which would contradict \eqref{4.25}. Thus we can restrict our attention to $\Tset^2$ and we need to show that $\omega(z,w)$ can vanish there only at $(\pm 1,\pm1)$. This follows by applying arguments similar to the ones we used at the beginning of the proof of \thref{th4.1}(I). We set 
\begin{equation}\label{4.27}
p(x,w)=\omega(z,w)\omega(1/z,w)
\end{equation}
and thus the denominator in the integrand in \eqref{4.25} is equal to $p(x,w)p(x,1/w)$. By Tonelli’s theorem, equation~\eqref{4.9} holds for a.e. $x\in(-1,1)$. Moreover, if $x=x_0$ is such that \eqref{4.9} holds then $p(x_0,w)\neq 0$ for $w\in \oDset\setminus\{\pm 1\}$ and $p(x_0,w)$ can have only simple zeros at $w=\pm 1$. But if $z_0\in\Tset$ is such that $x_0=\frac{1}{2}\left(z_0+\frac{1}{z_0}\right)$, then \eqref{4.27} shows that the real zeros of $p(x_0,w)$ have even multiplicities. Therefore, if $x=x_0$ is such that \eqref{4.9} holds then $p(x_0,w)\neq 0$ for $w\in \oDset$. Applying Fatou's lemma, it follows that \eqref{4.9} holds for all $x\in (-1,1)$ and therefore 
\begin{subequations}
\begin{align}
&\omega(z,w)\neq 0\qquad\text{ when }(z,w)\in (\Tset\setminus\{\pm 1\})\times \oDset.\label{4.28a}\\
\intertext{Using the same argument but reversing the roles of $z$ and $w$, we see that }
&\omega(z,w)\neq 0\qquad\text{ when }(z,w)\in \oDset\times  (\Tset\setminus\{\pm 1\}).\label{4.28b}
\end{align}
\end{subequations}
The proof follows from equations \eqref{4.28a}-\eqref{4.28b}.
\end{proof}

Note that $\omega_{\epsilon_1,\epsilon_2}(z,w) =2+\epsilon_1 z+\epsilon_2 w$ where $\epsilon_1,\epsilon_2\in\{-1,1\}$ provide examples of polynomials satisfying \eqref{4.25} and vanishing at the four points $(\pm 1,\pm 1)$.

The next theorem describes the characteristic properties of the Bernstein--Szeg\H{o} measures on $\Rset^2$ for which both \thref{th4.1} and \reref{re4.9} apply.

\begin{Theorem}[Bernstein--Szeg\H{o} measures on $\Rset^2$]\label{th4.11}
Suppose that \\ 
$\bullet$ $\omega(z,w)\in\Rset_{n_0,m_0}[z,w]$ is nonzero for $(z,w)\in \Dset^2$, \\
$\bullet$ $q_1(x)\in \Rset_{2n_1}[x]$ is positive for $x\in (-1,1)$, \\
$\bullet$ $q_2(y)\in \Rset_{2m_1}[y]$ is positive for $y\in (-1,1)$,\\
and 
\begin{equation}\label{4.29}
Q(x,y)=q_1(x)q_2(y)\omega(z,w)\omega(1/z,w)\omega(z,1/w)\omega(1/z,1/w)
\end{equation}
is such that 
\begin{equation}\label{4.30}
\iint_{(-1,1)^2}\frac{\sqrt{1-x^2}\sqrt{1-y^2}}{Q(x,y)}\,dx\,dy<\infty.
\end{equation}
Then the recurrence coefficients of the linear functional $\cL:\Rset[x,y]\to\Rset$ defined by
\begin{equation}\label{4.31}
\cL(f)=\frac{4}{\pi^2}\iint_{(-1,1)^2}\frac{f(x,y)\sqrt{1-x^2}\sqrt{1-y^2}}{Q(x,y)}\,dx\,dy, 
\end{equation}
satisfy 
\begin{subequations}\label{4.32}
\begin{align}
&A_{k+1,l}=\frac{1}{2}I_{l+1},  && B_{k,l}=0, && \text{ for all }\quad k \geq n,\quad  l\geq m, \label{4.32a} \\
&\tilde{A}_{k,l+1}=\frac{1}{2}I_{k+1},  && \tilde{B}_{k,l}=0, && \text{ for all }\quad k \geq n,\quad  l\geq m, \label{4.32b}
\end{align}
\end{subequations}
where $n=n_0+n_1$, $m=m_0+m_1$. Moreover, if $\qt_1(z)\in\Rset_{2n_1}[z]$, $\qt_2(w)\in  \Rset_{2m_1}[w]$ denote the stable Fej\'er--Riesz factors of $q_1(x)$ and $q_2(y)$, respectively,  and if we set 
\begin{subequations}\label{4.33}
\begin{align}
&p(x,w)=\qt_2(w)\omega(z,w)\omega(1/z,w),\label{4.33a}\\
&\ph(z,y)=\qt_1(z)\omega(z,w)\omega(z,1/w),\label{4.33b}
\end{align}
\end{subequations}
then 
\begin{equation}\label{4.34}
\cF_{n,m}(\cL;\cdot)=\cK_m(q_1,p;\cdot) \qquad\text{and}\qquad \cFt_{n,m}(\cL;\cdot)=\cKt_n(q_2,\ph;\cdot).
\end{equation}
\end{Theorem}

\begin{proof}
From \leref{le4.10} we know that $\omega(z,w)$ is nonzero on $\oDset^2$, except possibly at the four points $(\pm 1,\pm 1)$. Combining this with \eqref{4.33a} we see that $p(x,w)\neq 0$ for $(x,w)\in(-1,1)\times  (\oDset\setminus  \{-1,1\})$. 
Note also that $p(x,w)\in \Rset_{n_0,2m}[x,w]$ and the denominator on the right-hand side of \eqref{4.31} can be rewritten as 
$$q_1(x) p(x,w)p(x,1/w).$$
Thus \eqref{4.32a} and the first identity in \eqref{4.34} follow from \thref{th4.1}. The proof of \eqref{4.32b} and the second identity in \eqref{4.34} can be obtained by applying the same arguments, exchanging the roles of $z$ and $w$.
\end{proof}

The next theorem goes in the opposite direction and characterizes the Bernstein--Szeg\H{o} measures in \eqref{4.29}-\eqref{4.31}. 

\begin{Theorem}\label{th4.12}
Let $\cL: \Rset_{2n,2m}[x,y]\to\Rset$ be a positive linear functional such that $\hat{\Psi}^m_{n}(z)=z^{n}(P^m_{n}(x)-2zA_{n,m}^tP^m_{n-1}(x))$ is invertible for $z\in(-1,1)$, while $\hat{\tilde{\Psi}}^n_{m}(w)=w^{m}(P^n_{m}(y)-2w\At_{n,m}^t\Pt^n_{m-1}(y))$ is invertible for $w\in(-1,1)$. Suppose that for some $n_1\leq n$ and $m_1\leq m$ there exist polynomials
\begin{enumerate}[\rm(a)]
\item $p(x,w)\in \Rset_{n-n_1,2m}[x,w]$ which is nonzero for $(x,w)\in(-1,1)^2$, 
\item $\ph(z,y)\in \Rset_{2n,m-m_1}[z,y]$ which is nonzero for $(z,y)\in(-1,1)^2$, and
\item $q_1(x)\in\Rset_{2n_1}[x]$, $q_2(y)\in\Rset_{2m_1}[y]$,
\end{enumerate}
such that \eqref{4.34} holds. 
Then $q_1(x)>0$, $q_2(y)>0$ for $x,y\in (-1,1)$, and there exists $\omega(z,w)\in \Rset_{n-n_1,m-m_1}[z,w]$ which is nonzero for $(z,w)\in\Dset^2$ and such that if we set 
\begin{equation}\label{4.35}
Q(x,y)=q_1(x)q_2(y)\omega(z,w)\omega(1/z,w)\omega(z,1/w)\omega(1/z,1/w),
\end{equation}
then \eqref{4.31} holds for all $f\in\Rset_{2n,2m}[x,y]$.
\end{Theorem}

Besides \thref{th4.1}(II) and \reref{re4.9}, for the proof of the above theorem we need a few lemmas. The first one provides a natural extension of a well-known criterion \cite[Theorem 1]{Strintzis} for stability of bivariate polynomials on $\oDset^2$, by replacing $\oDset^2$ with $\Dset^2$ and by allowing vanishing at some points on the boundary of $\Dset^2$.

\begin{Lemma}\label{le4.13}
Let $\omega(z,w)\in\Cset[z,w]$. \\
{\rm (a)} If $z_0\in \oDset$ and if $\hat{T}_2$ is a dense subset of $\Tset$ such that
\begin{subequations}\label{4.36}
\begin{align}
&\omega(z_0,w)\neq 0, && \text {for all }w\in\oDset, \label{4.36a}\\
&\omega(z,w)\neq 0, &&\text {for }(z,w)\in \Dset \times \hat{T}_2  ,\label{4.36b}
\end{align}
\end{subequations}
then 
\begin{equation}\label{4.37}
\omega(z,w)\neq 0\qquad  \text {for }(z,w)\in \Dset^2. 
\end{equation}
{\rm (b)}  In particular, if $T_1$ and $T_2$ are countable subsets of $\Tset$ and
\begin{subequations}\label{4.38}
\begin{align}
&\omega(z,w)\neq 0, && \text {for }(z,w)\in (\Tset\setminus T_1)\times (\oDset\setminus T_2), \label{4.38a}\\
&\omega(z,w)\neq 0, &&\text {for }(z,w)\in \Dset \times (\Tset\setminus T_2),\label{4.38b}
\end{align}
\end{subequations}
then \eqref{4.37} holds.
\end{Lemma}

\begin{proof}
The statements in both parts are trivial if $\omega(z,w)$ depends only on one of the variables. Moreover, if $\omega(z,w)$ factors in $\Cset[z,w]$ then equations \eqref{4.36} in (a) or \eqref{4.38} in (b) will hold for each of the factors and \eqref{4.37} will be true for $\omega(z,w)$ if and only if it is true for each of the factors. Thus, without any restriction, we can assume in the proof that $\omega(z,w)$ has no factors depending only on $z$ or $w$.

We start by proving (a). It is easy to see that $\omega(z,w)\neq 0$ for all $(z,w)\in \Dset\times \Tset$. By \eqref{4.36b}, we just need to show that this is true if $w\notin \hat{T}_2$. We fix $w_0\in\Tset\setminus \hat{T}_2 $ and we consider a sequence $\hat{w}_k$ of points in $\hat{T}_2$ such that $\hat{w}_k\to w_0$ as $k\to\infty$. Since $\omega(z,\hat{w}_k)\to \omega(z,w_0)$ uniformly on $\Dset$, the proof follows from Hurwitz's theorem.

Let $\{K_n\}_{n=1}^{\infty}$ be a sequence of connected, compact subsets of $\Dset\cup\{z_0\}$ such that $z_0\in K_n$ and 
\begin{equation}\label{4.39}
\cup_{n=1}^{\infty}K_n= \Dset\cup\{z_0\}. 
\end{equation}
For instance, we can take $K_n$ to be the convex hull of the closed disk $\oDset_{n/(n+1)}$ centered at the origin with radius $n/(n+1)$ and $\{z_0\}$. Then for every fixed $z\in K_n$, $\omega(z,w)\neq 0$ when $w\in \Tset$ and the number of zeros of $\omega(z,w)$ for $w\in\Dset$ is 
$$\cZ(z)=\frac{1}{2\pi i}\oint_{\Tset}\frac{\partial_w \omega(z,w)}{\omega(z,w)}\,dw.$$
By the dominated converge theorem, $\cZ:K_n\to \Nset_0$ is continuous and since $K_n$ is connected, $\cZ$ must be a constant. Since $\cZ(z_0)=0$, we see that $\cZ(z)=0$ for all $z\in K_n$, i.e.
$\omega(z,w)\neq 0$ when $(z,w)\in K_n\times \Dset$. Equation \eqref{4.37} now follows from \eqref{4.39}, completing the proof of (a)

If we set $\hat{T}_2=\Tset\setminus T_2$, then clearly \eqref{4.38b} implies \eqref{4.36b}, hence (b) will follow from (a) if we can show that equation \eqref{4.38a} implies \eqref{4.36a}. For each $w_0\in T_2$ the polynomial $\omega(z,w_0)$ is not identically $0$ and has finitely many roots in $\Cset$. Let $z_0\in \Tset\setminus T_1$ be such that $\omega(z_0,w_0)\neq 0$ for all $w_0\in T_2$. Then by construction and using \eqref{4.38a} we see that  \eqref{4.36a} holds.  
\end{proof}

\begin{Lemma}\label{le4.14}
Let $\xi(z,w)\in\Rset[z,w]$ be an irreducible polynomial which depends on both $z$ and $w$, such that
\begin{equation}\label{4.40}
\xi(z,w)=cz^k\xi(1/z,w),
\end{equation}
for some $k\in\Nset$ and $c\in\Rset$. Then $c=1$, $k$ is even and there exists $(z_0,w_0)\in\Dset^2$ such that $\xi(z_0,w_0)=0$.
\end{Lemma}

\begin{proof}
Iterating \eqref{4.40} we see that $\xi(z,w)=cz^k\xi(1/z,w)=c^2\xi(z,w)$ which shows that $c=\pm1$. If we assume that $c=-1$ then setting $z=1$ in \eqref{4.40} gives $\xi(1,w)=0$ which is impossible because this would imply that $\xi(z,w)$ is divisible by $z-1$. Thus $c=1$ and it is easy to see that $k$ must be even since otherwise \eqref{4.40} with $z=-1$ would imply that $\xi(z,w)$ is divisible by $z+1$.

Equation \eqref{4.40} and the irreducibility of $\xi(z,w)$ show that  $\xi(z,w)$ has the expansion 
$$\xi(z,w)=\sum_{j=0}^{k}\xi_j(w)z^j,$$
where $\xi_j(w)=\xi_{k-j}(w)$ for $j=0,\dots,k$ and the highest coefficient $\xi_k(w)$ is not identically equal to $0$. 
If we take any $w_1\in\Dset$ such that $\xi_k(w_1)\neq 0$, the product of the roots of $\xi(z,w_1)=0$ will be $1$ and therefore, if none of them is in $\Dset$, they all must lie on $\Tset$. The proof will follow if we can show that this cannot be true for all such $w_1\in\Dset$. Fix $w_1\in\Dset$ which is neither a zero of the highest coefficient $\xi_k(w)$ nor a zero of the discriminant of $\xi(z,w)$ viewed as a polynomial of $z$. If $z_1$ is any root of the equation $\xi(z,w_1)=0$, there exists an open disk $\Dset_{\epsilon}(w_1)\subset\Dset$ containing $w_1$ and a nonconstant holomorphic function $f:\Dset_{\epsilon}(w_1)\to\Cset$ such that $f(w_1)=z_1$ and $\xi(f(w),w)=0$. Since the set $f(\Dset_{\epsilon}(w_1))$  
is open, it cannot be a subset of $\Tset$.
\end{proof}

\begin{Lemma}\label{le4.15}
Let $Q(x,y)\in \Rset[x,y]$ be a polynomial which is positive for $(x,y)\in(-1,1)^2$. If there exist polynomials $q_1(x)$, $q_2(y)$, $p(x,w)$ and $\ph(z,y)$ with real coefficients such that 
\begin{enumerate}[\rm(i)]
\item $p(x,w)\neq 0$ for $(x,w)\in(-1,1)\times  (\oDset\setminus  \{-1,1\})$, 
\item $\ph(z,y)\neq 0$ for $(z,y)\in (\oDset\setminus  \{-1,1\}) \times (-1,1)$,
\end{enumerate}
and
\begin{equation}\label{4.41}
Q(x,y)= q_1(x)p(x,w)p(x,1/w)=q_2(y)\ph(z,y)\ph(1/z,y),
\end{equation}
then there exists a polynomial $\omega(z,w)\in\Rset[z,w]$ which is nonzero for $(z,w)\in \Dset^2$ and equation \eqref{4.29} holds.
\end{Lemma}

\begin{proof}
Equation~\eqref{4.41} shows that $q_1(x)>0$ for $x\in(-1,1)$ and $q_1(x)$ divides $Q(x,y)$. Moreover, if $\qt_1(z)\in\Rset[z]$ is the stable Fej\'er--Riesz factor of $q_1(x)$, then it is easy to see that $\qt_1(z)$ divides $\ph(z,y)$. Thus, we can cancel $q_1(x)$ in \eqref{4.41} and the remaining polynomials will satisfy the conditions in the lemma. This means that we can assume that $q_1(x)=1$ and a similar argument shows that we can take $q_2(y)=1$. In view of this, we assume below that $q_1(x)=q_2(y)=1$ and therefore \eqref{4.41} reduces to 
\begin{equation}\label{4.42}
Q(x,y)= p(x,w)p(x,1/w)=\ph(z,y)\ph(1/z,y).
\end{equation}
We can further assume that $p(x,w)$ and $\ph(z,y)$ have no factors depending on just one of the variables. Indeed, $p(x,w)$ has a factor $q_0(x)$ if and only if $q_0^2(x)$ divides $Q(x,y)$ and $\ph(z,y)$ has a factor $\hat{q}_0(z)$ such that $q_0^2(x)=\hat{q}_0(z)\hat{q}_0(1/z)$. This means that if we know how to construct $\omega(z,w)$ satisfying \eqref{4.35} with $Q$ replaced by $Q(x,y)/q_0^2(x)$, we can multiply it by $\hat{q}_0(z)$ to obtain $\omega(z,w)$ for $Q(x,y)$. A similar argument can be applied for factors depending only on $y$ and $w$. We can also assume that $p$ and $\ph$ are normalized so that $p(0,0)>0$ and $\ph(0,0)>0$.

Next, we replace $x$ by $\frac{1}{2}\left(z+\frac{1}{z}\right)$ and we factor $p(x,w)$ into a product of irreducible factors in the unique factorization domain $\Rset[z^{\pm 1},w]$. With the normalization above, we know that in this factorization, $p(x,w)$ cannot have irreducible factors depending only on $z$ or $w$. 
We show next that this factorization cannot contain an irreducible factor $\xi(z,w)$ which is associate to $\xi(1/z,w)$ in $\Rset[z^{\pm 1},w]$. Indeed, assume that $p(x,w)$ has such irreducible factor, i.e. \eqref{4.40} holds and by multiplying by a unit element of the form $az^j$ where $a\in\Rset\setminus\{0\}$ and $j\in\Zset$ we can assume that  $\xi(z,w)$ is an irreducible element in $\Rset[z,w]$.
Condition (i) shows that 
\begin{subequations}
\begin{equation}\label{4.43a}
\xi(z,w)\neq 0\quad\text{ when }(z,w)\in(\Tset\setminus\{-1,1\})\times  (\oDset\setminus  \{-1,1\}). 
\end{equation}
We replace $y$ by $\frac{1}{2}\left(w+\frac{1}{w}\right)$ and we consider \eqref{4.42} in the ring $\Rset[z^{\pm 1},w^{\pm 1}]$ of Laurent polynomials of $z$ and $w$ which is also a unique factorization domain. This shows that $\xi(z,w)$ must also be an irreducible factor in both $\ph(z,y)$ and $\ph(1/z,y)$ in the ring $\Rset[z^{\pm 1},w^{\pm 1}]$. Therefore, condition (ii) tells us that
\begin{equation}\label{4.43b}
\xi(z,w)\neq 0\quad\text{ when }(z,w)\in (\oDset\setminus  \{-1,1\})\times (\Tset\setminus\{-1,1\}). 
\end{equation}
\end{subequations}
Using equations \eqref{4.43a}-\eqref{4.43b} and \leref{le4.13}(b) with $T_1=T_2=\{-1,1\}$ it follows that $\xi(z,w)\neq 0$ for $(z,w)\in\Dset^2$ which contradicts \leref{le4.14}.

Since $p(x,w)$ is invariant under the involution $z\to 1/z$ and has no irreducible factor $\xi(z,w)$ which is associated to $\xi(1/z,w)$, we can factor it as follows  
\begin{equation}\label{4.44}
p(x,w)=\prod_{j=1}^{l}\omega_j(z,w)\omega_j(1/z,w), 
\text{ where }\omega_j(z,w)\in\Rset[z,w] \text{ are irreducible.}
\end{equation}
From condition (i) it follows that for all $j=1,\dots,l$ we have
\begin{equation}\label{4.45}
\omega_j(z,w)\neq 0\quad \text{ for } (z,w)\in(\Tset\setminus\{-1,1\})\times  (\oDset\setminus  \{-1,1\}).
\end{equation}
Note that we still have some freedom in choosing the polynomials $\omega_j(z,w)$: if $n_j\in\Nset$ is the minimal positive integer such that $z^{n_j}\omega_j(1/z,w)\in\Rset[z,w]$ (i.e. $n_j$ is the degree of $\omega_j(z,w)$ viewed as a polynomial in $z$ for generic $w$), then we can replace $\omega_j(z,w)$ by  $z^{n_j}\omega_j(1/z,w)$ and both the representation in \eqref{4.44} and the nonvanishing condition in \eqref{4.45} will still hold. For each $j$, we will pick one of these two terms below and include it in $\omega(z,w)$.

Similarly to the constructions above, we replace $y$ by $\frac{1}{2}\left(w+\frac{1}{w}\right)$ and we can factor $\ph(z,y)$ as follows
\begin{equation}\label{4.46}
\ph(z,y)=\prod_{k=1}^{l}\hat{\omega}_k(z,w) \hat{\omega}_k(z,1/w), \text{ where }\hat{\omega}_k(z,w)\in\Rset[z,w] \text{ are irreducible.}
\end{equation}
Using condition (ii) we see that for $k=1,\dots,l$ we have
\begin{equation}\label{4.47}
\hat{\omega}_k(z,w)\neq 0\quad \text{ for } (z,w)\in (\oDset\setminus  \{-1,1\})\times (\Tset\setminus\{-1,1\}).
\end{equation}
If $\hat{m}_k\in\Nset$ is the minimal positive integer such that $w^{\hat{m}_k} \hat{\omega}_k(z,1/w)\in\Rset[z,w]$, then we can replace $\hat{\omega}_k(z,w)$ by $w^{\hat{m}_k} \hat{\omega}_k(z,1/w)$ and both the representation in \eqref{4.46} and the nonvanishing condition in \eqref{4.47} will still hold.

We substitute \eqref{4.44} and \eqref{4.46} into \eqref{4.42} and consider this equation in the unique factorization domain $\Rset[z^{\pm 1},w^{\pm 1}]$. It follows that for every $j$, there exists a unique $k$ such that exactly one of the following holds:
\begin{enumerate}[\rm(a)]
\item $\omega_j(z,w)$ is associate to $\hat{\omega}_k(z,w)$ or $\hat{\omega}_k(z,1/w)$ in $\Rset[z^{\pm 1},w^{\pm 1}]$;
\item $\omega_j(z,w)$ is associate to $\hat{\omega}_k(1/z,w)$ or $\hat{\omega}_k(1/z,1/w)$ in $\Rset[z^{\pm 1},w^{\pm 1}]$.
\end{enumerate}
If (b) holds for some $j$, then we will use the freedom to replace  $\omega_j(z,w)$ by  $z^{n_j}\omega_j(1/z,w)$, and thus we can assume that (a) holds for all $j=1,\dots,l$. Next, if $\omega_j(z,w)$ is associate to $\hat{\omega}_k(z,1/w)$ then we replace $\hat{\omega}_k(z,w)$ by $w^{\hat{m}_k} \hat{\omega}_k(z,1/w)$. With this normalization, for every $j\in\{1,\dots,l\}$, there exists a unique $k\in\{1,\dots,l\}$ such that $\omega_j(z,w)$ is associate to $\hat{\omega}_k(z,w)$ in  $\Rset[z^{\pm 1},w^{\pm 1}]$. Note that the units in $\Rset[z^{\pm1},w^{\pm1}]$ are of the form $cz^jw^s$, where $c\in\Rset\setminus\{0\}$, $j,r\in\Zset$ and the polynomials $\omega_j(z,w)$, $\hat{\omega}_k(z,w)$ are not divisible by $z$ or $w$, so we must have 
$$\omega_j(z,w)=c_k\hat{\omega}_k(z,w),\quad\text{ where }c_k \text{ is a nonzero constant}.$$ 
Combining the last equation with equations \eqref{4.45} and \eqref{4.47}, and applying \leref{le4.13}(b) with $T_1=T_2=\{-1,1\}$, we see that $\omega_j(z,w)\neq 0$ for $(z,w)\in\Dset^2$. Since this is true for every $j$, the proof follows by setting $\omega(z,w)=\prod_{j=1}^{l}\omega_j(z,w)$.
\end{proof}
 
\begin{proof}[Proof of \thref{th4.12}]
\thref{th4.1}(II) and \reref{re4.4} tell us that $q_1(x)>0$ for $x\in(-1,1)$,  
$$p(x,w)\neq 0\quad\text{ for }(x,w)\in(-1,1)\times  (\oDset\setminus  \{-1,1\})$$ 
and if we set 
$$Q(x,y)=q_1(x)p(x,w)p(x,1/w),$$
then \eqref{4.31} holds for all $f\in\Rset_{2n,2m}[x,y]$. Similarly, reversing the roles of $x$ and $y$ using \reref{re4.9} we see that 
$q_2(y)>0$ for $y\in(-1,1)$,  
$$\ph(z,y)\neq 0\quad\text{ for }(z,y)\in (\oDset\setminus  \{-1,1\})\times (-1,1)$$ 
and if we set 
$$Q_1(x,y)=q_2(y)\ph(z,y)\ph(1/z,y),$$
then \eqref{4.31} holds for all $f\in\Rset_{2n,2m}[x,y]$ with $Q$ replaced by $Q_1$. By \leref{le4.6}, $Q$ and $Q_1$ coincide, i.e.
$$Q(x,y)=q_1(x)p(x,w)p(x,1/w)=q_2(y)\ph(z,y)\ph(1/z,y).$$
The proof now follows from \leref{le4.15}.
\end{proof}

\section{Orthogonal decompositions and extra orthogonality for Bernstein--Szeg\H{o} measures on the torus}\label{se5}
We denote by $\Cset[z^{\pm 1},w^{\pm 1}]=\Cset[z,z^{-1},w,w^{-1}]$ the space of Laurent polynomials with complex coefficients in $z$ and $w$. For every polynomial $g(z,w)$ we set
$$\gb(z,w)=\overline{g(\zb,\wb)},$$
and we extend this notation to subspaces of $\Cset[z^{\pm 1},w^{\pm 1}]$, i.e. if $V[z,w]$ is a subspace of $\Cset[z^{\pm 1},w^{\pm 1}]$, then $\overline{V}[z,w]$ is the subspace 
$$\overline{V}[z,w]=\{\gb(z,w): g(z,w)\in V[z,w] \}.$$ 

Recall that $\Cset_{\nt,\mt}[z,w]$ denotes the space of polynomials with complex coefficients in $z$ and $w$ of degrees at most $\nt$ in $z$ and $\mt$ in $w$. Throughout this section, we fix 
\begin{subequations}
\begin{equation}\label{5.1a}
p_c(z,w)\in \Cset_{\nt,\mt}[z,w]\qquad \text{ such that }\quad p_{c}(z,w)\neq 0\text{ for }(z,w)\in\Tset\times \oDset,
\end{equation}
and we define a positive linear functional on $\Cset[z^{\pm 1},w^{\pm 1}]$ by
\begin{equation}\label{5.1b}
\cL_{p_c}(z^kw^l)=\frac{1}{(2\pi)^2}\iint_{\Tset^2}z^kw^l\, \frac{|dz|\,|dw|}{|p_c(z,w)|^2}.
\end{equation}
\end{subequations}
The functional $\cL_{p_c}$ induces an inner product on $\Cset[z,w]$ by 
$$\langle f,g\rangle_{\cL_{p_c}} =\cL_{p_c}(f(z,w)\gb(1/z,1/w)),\quad\text{where } f,g\in \Cset[z,w],$$
and we consider the spaces 
\begin{subequations}\label{5.2}
\begin{align}
&\sP_{k,l;\cL_{p_c}}[z,w]=\Cset_{k,l}[z,w]\ominus \Cset_{k-1,l}[z,w], \label{5.2a} \\
&\sPt_{k,l;\cL_{p_c}}[z,w]=\Cset_{k,l}[z,w]\ominus \Cset_{k,l-1}[z,w], \label{5.2b}
\end{align}
\end{subequations}
of orthogonal polynomials with respect to this inner product. 
One of the interesting facts discovered in \cite{GeI2} is that for $k$ and $l$ sufficiently large, the stability condition \eqref{5.1a} allows us to decompose the space $\sPt_{k,l;\cL_{p_c}}[z,w]$ into the direct sum of two orthogonal subspaces $\sPt_{k,l}^{1}$ and $\sPt_{k,l}^{2}$ which have a simple spectral characterization called the {\em split-shift orthogonality condition} in \cite{GeIK}, namely $\sPt_{k,l}^{1}\perp z\sPt_{k,l}^{2}$ and $\sPt_{k,l}^{1}, z\sPt_{k,l}^{2} \subset \sPt_{k+1,l;\cL_{p_c}}[z,w]$. Moreover, these spaces possess a lot of ``extra orthogonality properties'' which will play a crucial role in the construction of the Szeg\H{o} mapping later, replacing the Cauchy--Goursat  theorem in the one-dimensional setting in the proof of \prref{pr2.1}. More precisely, with the above notations, we can summarize some of the results in \cite{GeI2,GeIK} as follows.
\begin{Theorem}\label{th5.1}
For $M\geq \mt$, there exist unique sub-spaces $\sPt_{\nt-1,M;\cL_{p_c}}^{1},\sPt_{\nt-1,M;\cL_{p_c}}^{2}$ of $\sPt_{\nt-1,M;\cL_{p_c}}[z,w]$ such that
\begin{subequations}\label{5.3}
\begin{align}
&\sPt_{\nt-1,M;\cL_{p_c}}[z,w] =\sPt_{\nt-1,M;\cL_{p_c}}^{1}\oplus \sPt_{\nt-1,M;\cL_{p_c}}^{2} , \label{5.3a} \\
& \sPt_{\nt,M;\cL_{p_c}}[z,w] =\sPt_{\nt-1,M;\cL_{p_c}}^{1}\oplus z\sPt_{\nt-1,M;\cL_{p_c}}^{2} \oplus \Span_{\Cset}\{ w^{M-\mt}\prs{c}(z,w)\} , \label{5.3b} 
\end{align}
\end{subequations}
where $\prs{c}(z,w)=z^{\nt}w^{\mt}\psb{c}(1/z,1/w)$. Moreover, these spaces satisfy the following orthogonality conditions
\begin{subequations}\label{5.4}
\begin{align}
&\sPt_{\nt-1,M;\cL_{p_c}}^{1}\perp z^{k}w^l, &&\text{ for all }\quad  0\leq k, \quad 0\leq l\leq M-1, \label{5.4a} \\
&\sPt_{\nt-1,M;\cL_{p_c}}^{1}\perp z^{k}w^{M-\mt}\prs{c}(z,w), &&\text{ for all }\quad  0\leq k,  \label{5.4b}\\
&\sPt_{\nt-1,M;\cL_{p_c}}^{2}\perp z^{k}w^l, &&\text{ for all }\quad   k\leq \nt-1, \quad 0\leq l\leq M-1, \label{5.4c} \\
&z^{k}\sPt_{\nt-1,M;\cL_{p_c}}^{2}\perp w^{M-\mt}\prs{c}(z,w), &&\text{ for all }\quad  1\leq k,  \label{5.4d}\\
&\sPt_{\nt-1,M;\cL_{p_c}}^{1}\perp z^k\sPt_{\nt-1,M;\cL_{p_c}}^{2}, &&\text{ for all }\quad  0\leq k. \label{5.4e}
\end{align}
\end{subequations}
\end{Theorem}

Note that the orthogonality conditions \eqref{5.4} tell us that for all $N\geq \nt$ and $M\geq \mt$ we have
\begin{equation*}
\sPt_{N,M;\cL_{p_c}}[z,w] =\sPt_{\nt-1,M;\cL_{p_c}}^{1}\oplus z^{N-\nt+1}\sPt_{\nt-1,M;\cL_{p_c}}^{2} \bigoplus_{k=0}^{N-\nt} \Span_{\Cset}\{ z^k w^{M-\mt}\prs{c}(z,w)\},
\end{equation*}
i.e. orthonormal bases of the spaces  $\sPt_{\nt-1,M;\cL_{p_c}}^{1} $ and $\sPt_{\nt-1,M;\cL_{p_c}}^{2}$ in \eqref{5.3a} can be used to construct a simple orthonormal basis of  $\sPt_{N,M;\cL_{p_c}}[z,w] $ for every $N\geq \nt$.
In the lemma below we prove another orthogonality relation which will be useful later.
\begin{Lemma}\label{le5.2}
For $k\geq 0$ we have
\begin{equation*}
z^{k+\nt-1}w^{M-1}\overline{\sPt_{\nt-1,M;\cL_{p_c}}^{1}}[1/z,1/w] \perp  \sPt_{\nt-1,M;\cL_{p_c}}^{1}[z,w].
\end{equation*}
\end{Lemma}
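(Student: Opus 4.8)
On $\Tset^2$ one has $1/z=\bar z$, $1/w=\bar w$, so $\langle f,g\rangle_{\cL_{p_c}}=\cL_{p_c}\!\big(f\,\bar g(1/z,1/w)\big)$ is the ordinary $L^2$ inner product against $d\sigma:=\frac{|dz|\,|dw|}{(2\pi)^2|p_c|^2}$ (extended verbatim to Laurent polynomials), and ``$\bar g(1/z,1/w)$'' restricts to $\overline{g}$. Fix $p,q\in\sPt^1_{\nt-1,M}$ and $k\ge 0$, and set $p^\sharp(z,w):=z^{\nt-1}w^{M}\bar p(1/z,1/w)\in\Cset_{\nt-1,M}[z,w]$, so that $z^{k+\nt-1}w^{M-1}\bar p(1/z,1/w)=z^{k}w^{-1}p^\sharp(z,w)$. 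Since $\bar p(1/z,1/w)\bar q(1/z,1/w)=\overline{pq}(1/z,1/w)$, conjugating the defining integral gives
\[
\big\langle z^{k+\nt-1}w^{M-1}\bar p(1/z,1/w),\,q\big\rangle_{\cL_{p_c}}=\overline{\big\langle pq,\;z^{k+\nt-1}w^{M-1}\big\rangle_{\cL_{p_c}}},
\]
so the lemma is equivalent to the single assertion $\langle z^{k}w^{-1}p^\sharp(z,w),\,q\rangle_{\cL_{p_c}}=0$ for all $p,q\in\sPt^1_{\nt-1,M}$ and $k\ge 0$ (equivalently, $pq\perp z^{k+\nt-1}w^{M-1}$).

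\textbf{Peeling off the easy part.} Expand $p^\sharp=\sum_{j=0}^{M}p^\sharp_j(z)w^{j}$ with $p^\sharp_j\in\Cset_{\nt-1}[z]$, so $z^{k}w^{-1}p^\sharp(z,w)=z^{k}p^\sharp_0(z)w^{-1}+\sum_{j=1}^{M}z^{k}p^\sharp_j(z)w^{j-1}$. For $1\le j\le M$ the summand $z^{k}p^\sharp_j(z)w^{j-1}$ is a polynomial in $z$ (nonnegative powers, since $k\ge 0$) times $w^{l}$ with $0\le l=j-1\le M-1$, hence is orthogonal to $\sPt^1_{\nt-1,M}$ by \eqref{5.3a}. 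Thus it suffices to prove $z^{k}p^\sharp_0(z)w^{-1}\perp\sPt^1_{\nt-1,M}$ for $k\ge 0$, where $p^\sharp_0(z)=z^{\nt-1}\overline{P_M}(1/z)$ and $P_M\in\Cset_{\nt-1}[z]$ is the leading ($w^M$-) coefficient of $p$. Transferring the $z$-polynomial $p^\sharp_0$ across the inner product and carrying out the $w$-integration (using $\overline{p^\sharp_0}(1/z)=z^{1-\nt}P_M(z)$ and $\int_{\Tset}q(z,w)w\,|p_c(z,w)|^{-2}|dw|=2\pi\,\phi_{-1}(z)$, where $\phi_{-1}(z)$ is the $w^{-1}$-Fourier coefficient of $q/|p_c|^2$), one finds that $\langle z^{k}p^\sharp_0(z)w^{-1},\,q\rangle_{\cL_{p_c}}$ equals, up to a harmless constant, the coefficient of $z^{k+\nt-1}$ in $P_M(z)\phi_{-1}(z)$. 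So the claim reduces to: $P_M(z)\phi_{-1}(z)\in\Span\{z^{c}:c\le \nt-2\}$.

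\textbf{The crux, via \eqref{5.3b}.} Here one must use $\sPt^1_{\nt-1,M}\perp z^{k}w^{M-\mt}\overleftarrow{p_c}(z,w)$ ($k\ge 0$), the only hypothesis reaching beyond $w$-degrees in $[0,M-1]$. Writing $w^{M-\mt}\overleftarrow{p_c}(z,w)=z^{\nt}w^{M}\overline{p_c}(1/z,1/w)$ and running it through the same simplification — $|p_c(z,w)|^{-2}=z^{\nt}w^{\mt}\big(p_c(z,w)\overleftarrow{p_c}(z,w)\big)^{-1}$ on $\Tset^2$, together with the facts that for $z\in\Tset$ the polynomial $p_c(z,\cdot)$ has no zeros in $\oDset$ while $\overleftarrow{p_c}(z,\cdot)$ has all of its zeros in $\Dset$ — translates \eqref{5.3b} into the information one needs both on $\phi_{-1}$ and on which polynomials can occur as leading coefficients $P_M$ of $\sPt^1_{\nt-1,M}$: concretely, $P_M$ must be divisible by the factor of the degree-$\nt$ polynomial $z^{\nt}\overline{p_c}(1/z,0)$ (the leading $w$-coefficient of $\overleftarrow{p_c}(z,\cdot)$, nonvanishing on $\Tset$) whose zeros lie outside $\oDset$. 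Feeding this, together with the bound $\deg P_M\le\nt-1$, into the reduced statement of the previous paragraph yields $P_M\phi_{-1}\in\Span\{z^{c}:c\le\nt-2\}$ and hence the lemma.

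\textbf{Main obstacle.} The genuinely hard step is the last one: \eqref{5.3a}--\eqref{5.3e} are one-sided orthogonalities located at $w$-degrees in $[0,M-1]$ or $[M-\mt,M]$, whereas the term $z^{k}p^\sharp_0(z)w^{-1}$ sits at $w$-degree $-1$, so its orthogonality to $\sPt^1_{\nt-1,M}$ is invisible to \eqref{5.3a} and must be produced by a genuine cancellation extracted from \eqref{5.3b}. One also has to be careful with the $z$-degree bookkeeping (the cutoff $k+\nt-1$ is forced by $\deg P_M\le\nt-1$) and with the exceptional points of $\Tset$ at which the $w$-degree of $p_c$, or of $\overleftarrow{p_c}$, drops.
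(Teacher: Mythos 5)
You correctly reduce the lemma to showing $z^k p^\sharp_0(z)\,w^{-1}\perp \sPt^1_{\nt-1,M;\cL_{p_c}}$ after peeling off the $w$-degrees $0,\dots,M-1$ via \eqref{5.3a}; this matches the first half of the paper's argument. But the remaining step --- which you yourself flag as ``the genuinely hard step'' --- is only gestured at, not carried out, and this is precisely where the proof lives.

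Concretely, you assert that \eqref{5.3b} ``translates into the information one needs,'' namely that the leading $w^M$-coefficient $P_M$ of any element of $\sPt^1_{\nt-1,M;\cL_{p_c}}$ is divisible by the factor of $z^{\nt}\overline{p_c}(1/z,0)$ with zeros outside $\oDset$, and that a Fourier-coefficient bookkeeping with $\phi_{-1}$ then closes the argument. Neither claim is proved. The divisibility of $P_M$ is exactly the content of the structure theorem the paper invokes (\cite[Theorem 2.3]{GeIK}, reproduced in \eqref{5.4}); deriving it from \eqref{5.3b} alone is a nontrivial piece of work that does not follow by ``running \eqref{5.3b} through the same simplification.'' And even granting that structure, your plan to conclude via the bound $\deg P_M\le\nt-1$ and the claim $P_M\phi_{-1}\in\Span\{z^c:c\le\nt-2\}$ is not substantiated --- no argument is given for why the relevant Fourier coefficient vanishes beyond asserting that some ``cancellation'' must occur.

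The paper finishes differently, and this is the part you miss: after invoking \eqref{5.4}, the troublesome term reduces to $w^{-1}p_u(z)\,\Cset[z]$, where $p_u$ is the unstable (zeros-in-$\Dset$) factor of $p_c(z,0)$. The key move is the partial-fraction identity
\begin{equation*}
\frac{p_u(z)}{w}=\frac{p_c(z,w)}{w\,p_s(z)}-\frac{p_c(z,w)-p_c(z,0)}{w\,p_s(z)},
\end{equation*}
where $p_s$ is the stable factor: the second term is a $w$-polynomial of degree at most $\mt-1$ with coefficients holomorphic on $\oDset$, hence lies in the $L^2$-closure of $\Span\{z^kw^l:k\ge 0,\;0\le l\le M-1\}$ and is annihilated by \eqref{5.3a}, while the first term is handled by a direct $w$-contour integral (the $p_c$ in the numerator cancels one factor of the weight, and Cauchy--Goursat finishes it). This algebraic splitting --- rather than a Fourier-coefficient/degree count --- is the mechanism that produces the cancellation you correctly sense must be there. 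As written, your proposal is an accurate outline with the crux left unproved.
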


\begin{proof}
In the proof, it will be convenient to use the fact that the inner product $\langle  ,\rangle_{\cL_{p_c}} $ extends to the Hilbert space $L^2(\Tset^2, \mu_c)$ where $d\mu_c=\frac{|dz|\,|dw|}{(2\pi)^2|p_c(z,w)|^2}$. Suppose that
$$p_c(z,0)=p_s(z)p_u(z),$$
where 
\begin{itemize}
\item $p_s(z)$ is nonzero for $|z|\leq 1$;
\item $p_u(z)$ is nonzero for $|z|\geq 1$, 
\end{itemize}
and let $\nt_0=\deg_z p_u(z)$. If $f\in \sPt_{\nt-1,M;\cL_{p_c}}^{1}$, then by \cite[Theorem~2.3]{GeIK} we have
\begin{align}
f(z,w)=&h(z) z^{\nt_0}\psb{u}(1/z)w^M + g(z,w),\label{5.5}\\
 \text{where }\quad &h(z)\in\Cset_{\nt-\nt_0-1}[z]\quad \text{ and }\quad g(z,w)\in \Cset_{\nt-1,M-1}[z,w]. \nonumber
\end{align}
Therefore, 
\begin{equation*}
z^{k+\nt-1}w^{M-1}\overline{f}(1/z,1/w)=z^{k+\nt-\nt_0-1} \hb(1/z) p_u(z) \frac{1}{w} + z^{k+\nt-1}w^{M-1}\gb(1/z,1/w).
\end{equation*}
Note that $z^{k+\nt-1}w^{M-1}\gb(1/z,1/w) \perp \sPt_{\nt-1,M;\cL_{p_c}}^{1}$ by \eqref{5.4a}, and therefore it is enough to show that $w^{-1} p_u(z) \Cset[z]\perp \sPt_{\nt-1,M;\cL_{p_c}}^{1} $. Moreover, 
$$\frac{p_u(z)}{w}=\frac{p(z,0)}{wp_s(z)}=\frac{p(z,w)}{wp_s(z)}- \underbrace{\frac{p(z,w)-p(z,0)}{wp_s(z)}}_{r(z,w)},$$
where $r(z,w)$ is a polynomial in $w$ of degree at most $\mt-1$, whose coefficients are holomorphic on the closed disk $|z|\leq 1$, and therefore $r(z,w)\Cset[z]\perp  \sPt_{\nt-1,M;\cL_{p_c}}^{1}$ by \eqref{5.4a}. Finally, it is easy to $\frac{p(z,w)}{wp_s(z)} \Cset[z]\perp \sPt_{\nt-1,M;\cL_{p_c}}^{1}$ by using the definition of the inner product and by computing the $w$-integral.
\end{proof}

We show below that the spaces $\sPt_{\nt-1,M;\cL_{p_c}}^{1}$ and  $\sPt_{\nt-1,M;\cL_{p_c}}^{2}$ are simply related to each other if $p_c(z,w)$ is invariant under a reflection in $z$. This fact will play a key role in the definition of the bivariate Szeg\H{o} mapping in the plane.

\begin{Proposition}\label{pr5.3}
If $\nt$ is even and 
\begin{equation}\label{5.6}
z^{\nt}p_c(1/z,w)=p_c(z,w),
\end{equation}
then 
\begin{equation}\label{5.7}
\sPt_{\nt-1,M;\cL_{p_c}}^{2}[z,w]=z^{\nt-1}\sPt_{\nt-1,M;\cL_{p_c}}^{1}[1/z,w].
\end{equation}
\end{Proposition}

\begin{proof}
For $k\in\Nset_0$, consider the involution $\cR_z^{k}$ on $\Cset[z^{\pm 1},w^{\pm 1}]$ defined by 
$$\cR_z^{k}(g(z,w))=z^{k}g(1/z,w).$$
Equation \eqref{5.6} shows that $\cR_z^{k}$ preserves the inner product $\langle , \rangle_{p_c}$, and since 
$$\cR_z^{k}(\Cset_{k,\ell}[z,w])=\Cset_{k,\ell}[z,w], \text{ for all }k,\ell \in\Nset_0$$
we see that  $\sPt_{k,\ell;\cL_{p_c}}[z,w]$ is an invariant subspace of $\cR_z^{k}$. In particular, applying $\cR_z^{\nt-1}$ to \eqref{5.3a} yields 
\begin{subequations}\label{5.8}
\begin{equation} \label{5.8a}
\sPt_{\nt-1,M;\cL_{p_c}} =\cR_z^{\nt-1}(\sPt_{\nt-1,M;\cL_{p_c}}^{1})\oplus \cR_z^{\nt-1}(\sPt_{\nt-1,M;\cL_{p_c}}^{2}).
\end{equation}
Equation \eqref{5.6} implies that $\cR_z^{\nt}( \prs{c}(z,w))= \prs{c}(z,w)$ and since 
$$\cR_z^{\nt}(\sPt_{\nt-1,M;\cL_{p_c}}^{1}) = z \cR_z^{\nt-1}(\sPt_{\nt-1,M;\cL_{p_c}}^{1}), \quad   \cR_z^{\nt}(z\sPt_{\nt-1,M;\cL_{p_c}}^{2} )=\cR_z^{\nt-1}(\sPt_{\nt-1,M;\cL_{p_c}}^{2} ),$$
applying $\cR_z^{\nt}$ to \eqref{5.3b} shows that
\begin{equation} \label{5.8b}
\sPt_{\nt,M;\cL_{p_c}} =z\cR_z^{\nt-1}(\sPt_{\nt-1,M;\cL_{p_c}}^{1})\oplus  \cR_z^{\nt-1}(\sPt_{\nt-1,M;\cL_{p_c}}^{2} )\oplus \Span_{\Cset}\{ w^{M-\mt}\prs{c}(z,w)\}.
\end{equation}
\end{subequations}
From equations \eqref{5.8a}-\eqref{5.8b} and the uniqueness of the subspaces $\sPt_{\nt-1,M;\cL_{p_c}}^{j}$ satisfying   \eqref{5.3a}-\eqref{5.3b} it follows that \eqref{5.7} holds.
\end{proof}

Note that \eqref{5.6} cannot hold if $\nt$ is odd because $p_c(z,w)$ will vanish when $z=-1$.

\begin{Remark}\label{re5.4}
A subclass of measures on $\Tset^2$ associated with polynomials 
\begin{equation}\label{5.9}
p_c(z,w)  \quad \text{ which are nonzero for }\quad (z,w)\in\oDset^2 
\end{equation}
was introduced and studied in \cite{GeWo}. 
In this case, there are several simplifications: 
\begin{align*}
\sPt_{\nt-1,M;\cL_{p_c}}^{2}=\{0\},\quad \sPt_{\nt-1,M;\cL_{p_c}}[z,w] =\sPt_{\nt-1,M;\cL_{p_c}}^{1}[z,w]=w^{M-\mt}\sPt_{\nt-1,\mt;\cL_{p_c}}[z,w]
\end{align*}
and
\begin{equation} \label{5.10}
\sPt_{\nt-1,\mt;\cL_{p_c}}\perp z^{k}w^l,\qquad\text{ for all }\quad  0\leq k, \quad l \leq \mt-1.
\end{equation}
\end{Remark}
There is an important class of polynomials which do not satisfy the stability condition \eqref{5.9}, but can be built from two such polynomials $p_{1}$ and $p_{2}$  by reversing one of the variables in one of them and then considering their product. These polynomials appear naturally in the classification of the Bernstein--Szeg\H{o} measures on $\Tset^{2}$ in \cite{GeI2}, and in that case, one can construct simple explicit bases of $\sPt_{\nt-1,M;\cL_{p_c}}^{1}$ and $\sPt_{\nt-1,M;\cL_{p_c}}^{2}$ for arbitrary $M$ sufficiently large, by using fixed orthonormal bases of the spaces associated with $p_{1}$ and $p_{2}$. The precise statement can be deduced from \cite[Theorem 2.7]{GeI2} and is given below.

\begin{Theorem}\label{th5.5}
Suppose that $p_1(z,w)\in \Cset_{\nt_1,\mt_1}[z,w]$, $p_2(z,w)\in \Cset_{\nt_2,\mt_2}[z,w]$ are polynomials nonvanishing for $(z,w)\in\oDset^2$, and 
$$p_c(z,w)=p_1(z,w)z^{\nt_2} p_2(1/z,w), \text{ where }\nt=\nt_1+\nt_2 \text{ and }\mt=\mt_1+\mt_2.$$
Then we have
\begin{subequations}\label{5.11}
\begin{align}
&\sPt_{\nt-1,\mt;\cL_{p_c}}^{1}[z,w] =(w^{\mt_2}\psb{2}(z,1/w) )\,\sPt_{\nt_1-1,\mt_1;\cL_{p_1}}[z,w] , \label{5.11a} \\
&\sPt_{\nt-1,\mt;\cL_{p_c}}^{2}[z,w] =(z^{\nt_1}w^{\mt_1}\psb{1}(1/z,1/w))  z^{\nt_2-1}\overline{\sPt_{\nt_2-1,\mt_2;\cL_{p_2}}}[1/z,w]. \label{5.11b} 
\end{align}
\end{subequations}
Moreover, for $M\geq \mt$ we have
\begin{subequations}\label{5.12}
\begin{align}
&\sPt_{\nt-1,M;\cL_{p_c}}^{1}[z,w] = w^{M-\mt}\sPt_{\nt-1,\mt;\cL_{p_c}}^{1}[z,w]  , \label{5.12a} \\
&\sPt_{\nt-1,M;\cL_{p_c}}^{2}[z,w] = w^{M-\mt}\sPt_{\nt-1,\mt;\cL_{p_c}}^{2}[z,w]. \label{5.12b} 
\end{align}
\end{subequations}
\end{Theorem}

Finally, if $p_c(z,w)$ has a factor depending only on $z$ we have the following decomposition.

\begin{Theorem}\label{th5.6}
Suppose that $p_1(z,w)\in \Cset_{\nt_1,\mt}[z,w]$ is nonzero for $(z,w)\in\Tset\times\oDset$, $p_2(z)\in \Cset_{\nt_2}[z]$ is nonzero for $z\in\oDset$, and let
$$p_c(z,w)=p_1(z,w) p_2(z), \text{ where }\nt=\nt_1+\nt_2.$$
If we set $\prs{1}(z,w)=z^{\nt_1}w^{\mt}\psb{1}(1/z,1/w)$ and $\prs{2}(z)=z^{\nt_2}\psb{2}(1/z)$, then for $M\geq \mt$ we have 
\begin{subequations}\label{5.13}
\begin{align}
&\sPt_{\nt-1,M;\cL_{p_c}}^{1}[z,w] =p_2(z) \sPt_{\nt_1-1,M;\cL_{p_1}}^{1}[z,w] \oplus w^{M-\mt} \prs{1}(z,w) \Cset_{\nt_2-1}[z], \label{5.13a} \\
&\sPt_{\nt-1,M;\cL_{p_c}}^{2}[z,w] = \prs{2}(z) \sPt_{\nt_1-1,M;\cL_{p_1}}^{2}[z,w]  . \label{5.13b} 
\end{align}
\end{subequations}
\end{Theorem}

\begin{proof}
We can check that with respect to $\cL_{p_c}$ we have:
\begin{align*}
&p_2(z) \sPt_{\nt_1-1,M;\cL_{p_1}}^{1}[z,w]\perp z^{k}w^l, \quad  \text{ for all } \quad  0\leq k, \quad 0\leq l\leq M-1, \\
& w^{M-\mt} \prs{1}(z,w) \Cset_{\nt_2-1}[z] \perp z^{k}w^l,  \quad  \text{ for all } \quad  0\leq k, \quad 0\leq l\leq M-1,  \\
&p_2(z) \sPt_{\nt_1-1,M;\cL_{p_1}}^{1}[z,w]\perp w^{M-\mt} \prs{1}(z,w) \Cset_{\nt_2-1}[z] ,   \\
& \prs{2}(z) \sPt_{\nt_1-1,M;\cL_{p_1}}^{2}[z,w]  \perp z^{k}w^l, \quad  \text{ for all } \quad  k\leq \nt-1, \quad 0\leq l\leq M-1, \\
&p_2(z) \sPt_{\nt_1-1,M;\cL_{p_1}}^{1}[z,w]\perp z^{k} \prs{2}(z) \sPt_{\nt_1-1,M;\cL_{p_1}}^{2}[z,w],  \quad  \text{ for all } \quad  0\leq k,  \\
&w^{M-\mt} \prs{1}(z,w) \Cset_{\nt_2-1}[z] \perp z^{k} \prs{2}(z) \sPt_{\nt_1-1,M;\cL_{p_1}}^{2}[z,w],  \quad  \text{ for all } \quad  0\leq k, \\
&p_2(z) \sPt_{\nt_1-1,M;\cL_{p_1}}^{1}[z,w]\perp w^{M-\mt} \prs{c}(z,w),  \\
&w^{M-\mt} \prs{1}(z,w) \Cset_{\nt_2-1}[z] \perp w^{M-\mt} \prs{c}(z,w), \\
&z  \prs{2}(z) \sPt_{\nt_1-1,M;\cL_{p_1}}^{2}[z,w]  \perp w^{M-\mt} \prs{c}(z,w), 
\end{align*}
and apply \thref{th5.1}.
\end{proof}

\begin{Remark}\label{re5.7}
If $p_c(z,w)$ has real coefficients then the spaces $\sPt_{\nt-1,M;\cL_{p_c}}^{1},\sPt_{\nt-1,M;\cL_{p_c}}^{2}$ in Theorem~\ref{th5.1} have bases consisting of polynomials with real coefficients, and we can replace the complex field with $\Rset$ in all statements above. 
More precisely, if 
$$p_c(z,w) \in \Rset_{\nt,\mt}[z,w]$$
and if we set 
\begin{align*}
&\sP_{k,l;\cL_{p_c};\Rset}[z,w]= \sP_{k,l;\cL_{p_c}}[z,w] \cap \Rset[z,w], \quad \sPt_{k,l;\cL_{p_c};\Rset}[z,w]= \sPt_{k,l;\cL_{p_c}}[z,w] \cap \Rset[z,w], \\
&\sPt_{\nt-1,M;\cL_{p_c};\Rset}^{j}[z,w] =\sPt_{\nt-1,M;\cL_{p_c}}^{j} [z,w]  \cap \Rset[z,w] , \quad \text{ for }j=1,2,
\end{align*}
then we can replace equations \eqref{5.3} with
\begin{subequations}\label{5.14}
\begin{align}
&\sPt_{\nt-1,M;\cL_{p_c};\Rset} =\sPt_{\nt-1,M;\cL_{p_c};\Rset}^{1}\oplus \sPt_{\nt-1,M;\cL_{p_c};\Rset}^{2} , \label{5.14a} \\
& \sPt_{\nt,M;\cL_{p_c};\Rset} =\sPt_{\nt-1,M;\cL_{p_c};\Rset}^{1}\oplus z\sPt_{\nt-1,M;\cL_{p_c};\Rset}^{2} \oplus \Span_{\Rset}\{ w^{M-\mt}\prs{c}(z,w)\}. \label{5.14b} 
\end{align}
\end{subequations}
\end{Remark}

\section{Szeg\H{o} mapping}\label{se6}
In this section, we explain how the decomposition of the spaces of polynomials on $\Tset^2$ in the previous section and their orthogonality properties can be used to construct bases of orthonormal polynomials for the Bernstein--Szeg\H{o} measures in the plane. The section is divided in two parts. In the first one, we consider in detail the functional $\cL$ in \thref{th4.1} and the construction of the orthonormal complement on the tilde side of the space spanned by product polynomials obtained in analogy with the one-dimensional theory, cf. \reref{re4.7}. The spaces associated with the  Bernstein--Szeg\H{o} measures in \thref{th4.11} are analyzed in the second subsection. In this case, the constructions simplify significantly and provide explicit orthonormal bases on both sides in terms of fixed orthonormal polynomials on the bi-circle contained in the finite-dimensional space $\Rset_{n_0,m_0}[z,w]$. 

\subsection{Szeg\H{o} mapping for the one-sided factorization case}
In this subsection, we consider the functional $\cL$ in \eqref{4.3} where $q(x)\in\Rset_{2n_1}[x]$, $p(x,w)\in \Rset_{n_0,2m}[x,w]$ are fixed polynomials such that $q(x)>0$ and $p(x,w)\neq 0$ when $(x,w)\in[-1,1]\times \oDset$. 
Let $\qt(z)$ be the stable Fej\'er--Riesz factor of $q(x)$ and let 
\begin{equation}
p_c(z,w)=\qt(z)\pt(z,w)\in\Rset_{2n,2m}[z,w],  \text{ where }n=n_1+n_0 \text{ and } \pt(z,w)=z^{n_0}p(x,w).
\end{equation}
With these notations, the denominator of the weight in \eqref{4.3} can be written simply as $p_c(z,w)p_c(1/z,1/w)$. Note that $p_c(z,w)$ is nonzero for $(z,w)\in\Tset\times \oDset$, and since it has real coefficients, we can use \reref{re5.7} and work with the spaces introduced in previous section over $\Rset$ where $\nt=2n$, $\mt=2m$, but we will omit the explicit $\Rset$ dependence in order to simplify the notation. Using  \thref{th5.6} we see that for $M\geq m$ we have 
\begin{subequations}\label{6.2}
\begin{align}
&\sPt_{2n-1,2M+1;\cL_{p_c}}^{1}[z,w] =\qt(z) \sPt_{2n_0-1,2M+1;\cL_{\pt}}^{1}[z,w] \oplus w^{2M+1} \pt(z,1/w) \Rset_{2n_1-1}[z], \label{6.2a} \\
&\sPt_{2n-1,2M+1;\cL_{p_c}}^{2}[z,w] = z^{2n_1}\qt(1/z) \sPt_{2n_0-1,2M+1;\cL_{\pt}}^{2}[z,w]. \label{6.2b} 
\end{align}
\end{subequations}
Since $z^{2n_0}\pt(1/z,w)=\pt(z,w)$, \prref{pr5.3} tells us that the spaces $\sPt_{2n_0-1,2M+1;\cL_{\pt}}^{1}[z,w] $ and $\sPt_{2n_0-1,2M+1;\cL_{\pt}}^{2}[z,w]$ in \eqref{6.2a} and \eqref{6.2b}, respectively, are  related by an appropriate reflection in $z$. In particular, this implies that 
$$\dim(\sPt_{2n_0-1,2M+1;\cL_{\pt}}^{1}[z,w])=\dim(\sPt_{2n_0-1,2M+1;\cL_{\pt}}^{2}[z,w])=n_0.$$

Let $S_{z,N}:\Rset[z]\to \Rset[x]$ denote the (linear) Szeg\H{o} mapping 
\begin{equation*}
S_{z,N}(f(z))=\frac{z^{N+1}f(1/z)-z^{-N-1}f(z)}{z-1/z}.
\end{equation*}
It is easy to see that $S_{z,N}:\Rset_{2n-1}[z]\to \Rset_{N}[x]$ for all $N\geq n$. Similarly, we have the  Szeg\H{o} mapping $S_{w,M-1}:\Rset[w]\to \Rset[y]$ defined by 
\begin{equation*}
S_{w,M-1}(g(w))=\frac{w^{M}g(1/w)-w^{-M}g(w)}{w-1/w},
\end{equation*}
and  $S_{w,M-1}:\Rset_{2M+1}[w]\to \Rset_{M}[y]$. Finally, we define the {\em bivariate Szeg\H{o} mapping}
$$\cS=\cS_{N,M-1}=S_{z,N}\circ S_{w,M-1}:\Rset[z,w]\to \Rset[x,y],$$ 
and therefore
$$\cS:\Rset_{2n-1,2M+1}[z,w]\to \Rset_{N,M}[x,y].$$ 
Explicitly, we have
\begin{equation*}
\begin{split}
\cS(f(z,w))=\frac{1}{(z-1/z)(w-1/w)}\Big[z^{N+1}w^{M}f(1/z,1/w)- z^{-N-1}w^{M}f(z,1/w)&\\
\qquad -z^{N+1}w^{-M}f(1/z,w)+z^{-N-1}w^{-M}f(z,w)\Big].&
\end{split}
\end{equation*}
To simplify the notation, we fix in this subsection $N\geq n$, $M\geq m$, and we will write $\cS$ instead of $\cS_{N,M-1}$, unless the $(N,M)$ dependence is important. Repeating the computation in \prref{pr2.1} for the $z$-integral and the $w$-integral it follows that for $g(x,y)\in\Rset[x,y]$ and $f(z,w)\in\Rset[z,w]$ we have
\begin{equation}\label{6.3}
\langle g(x,y), \cS(f(z,w))\rangle_{\cL}=\langle z^{N}w^{M-1}(z^2-1)(w^2-1)g(x,y) ,f(z,w) \rangle_{\cL_{p_c}}.
\end{equation}
Using this equation, it is easy to see that 
\begin{equation}\label{6.4}
\cS: \sPt_{2n-1,2M+1;\cL_{p_c}}^{1}[z,w] \to \sPt_{N,M;\cL}[x,y].
\end{equation}
Indeed, if $g(x,y)\in\Rset_{N,M-1}[x,y]$, then $ z^{N}w^{M-1}(z^2-1)(w^2-1)g(x,y)\in\Rset[z,w]$ is of degree at most $2M$ in $w$, hence \eqref{5.4a} shows that $z^{N}w^{M-1}(z^2-1)(w^2-1)g(x,y) \perp \sPt_{2n-1,2M+1;\cL_{p_c}}^{1}[z,w] $ with respect to $\cL_{p_c}$. Thus, 
$\cS(\sPt_{2n-1,2M+1;\cL_{p_c}}^{1}[z,w])$ is orthogonal to $\Rset_{N,M-1}[x,y]$ with respect to $\cL$, establishing \eqref{6.4}.

Note that if $f(z,w)=w^{2M+1}z^{n_0}p(x,1/w)g(z)\in w^{2M+1} \pt(z,1/w) \Rset_{2n_1-1}[z]$ is an element of the second space in \eqref{6.2a}, then 
\begin{align}
\cS(f(z,w))&=S_{w,M-1}(w^{2M+1}p(x,1/w))S_{z,N}(z^{n_0}g(z))\nonumber\\
&=-S_{w,M}(p(x,w))S_{z,N-n_0}(g(z))\in S_{w,M}(p(x,w)) \Rset_{N-n_0}[x],\label{6.5}
\end{align}
where $S_{w,M}(p(x,w))=p_M(y;x)$ is the polynomial in \reref{re4.7}. We show next that the restriction of $\cS$ to the first space in \eqref{6.2a} builds the orthogonal complement of the orthonormal set $\{p_M(y;x)U^q_j(x)\}_{j=0}^{N-n_0}$ in $ \sPt_{N,M;\cL}[x,y]$.

First, note that if $f(z,w)=\qt(z) h(z,w)\in \qt(z) \sPt_{2n_0-1,2M+1;\cL_{\pt}}^{1}[z,w]$ and $g(x,y)=p_M(y;x)r(x)\in p_M(y;x) \Rset_{N-n_0}[x]$, then by \eqref{6.3} we have
\begin{subequations}
\begin{align}
\langle g(x,y), \cS(f(z,w))\rangle_{\cL} =& \left\langle \frac{(z^2-1)z^{N-n_0}r(x)w^{2M+1}z^{n_0}p(x,1/w)}{\qt(z)} ,h(z,w)  \right\rangle_{\cL_{\pt}} \label{6.6a} \\
&-  \left\langle \frac{z^{N}(z^2-1)r(x)p(x,w)}{w\, \qt(z)} ,h(z,w)  \right\rangle_{\cL_{\pt}}. \label{6.6b} 
\end{align}
\end{subequations}
The right-hand side of \eqref{6.6a} is equal to $0$ by \eqref{5.4b}, and it is easy to see that the right-hand side of \eqref{6.6b} is equal to $0$ by computing the $w$-integral. This shows that the left-hand side of \eqref{6.6a} is also equal to $0$, and therefore 
\begin{equation*}
\cS:\qt(z) \sPt_{2n_0-1,2M+1;\cL_{\pt}}^{1}[z,w] \to \sPt_{N,M;\cL}[x,y] \ominus (p_M(y;x) \Rset_{N-n_0}[x]).
\end{equation*}
Note that both spaces in the last equation have the same dimension $n_0$, and we will prove next that $\cS$ is an isomorphism. However, in general, $\cS$ will not be an isometry if we use the natural inner product $\langle , \rangle_{\cL_{p_c}}$ on $\qt(z) \sPt_{2n_0-1,2M+1;\cL_{\pt}}^{1}[z,w]$.

\begin{Lemma}\label{le6.1}
The map $\cS:\qt(z) \sPt_{2n_0-1,2M+1;\cL_{\pt}}^{1}[z,w] \to \sPt_{N,M;\cL}[x,y]$ is injective. Moreover, for $f,g\in \qt(z) \sPt_{2n_0-1,2M+1;\cL_{\pt}}^{1}[z,w]$ we have
\begin{equation}\label{6.7}
\langle \cS(f(z,w)), \cS(g(z,w))\rangle_{\cL}=\langle  f(z,w), g(z,w)\rangle_{\cL_{p_c}}-\langle w^{2M} f(z,1/w), g(z,w)\rangle_{\cL_{p_c}}.
\end{equation}
\end{Lemma}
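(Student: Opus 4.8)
The plan is to establish the identity \eqref{6.7} first, by expanding $\langle\cS(f),\cS(g)\rangle_{\cL}$ through \eqref{6.3}, and then to deduce injectivity from \eqref{6.7} once one notices that $f\mapsto w^{2M}f(z,1/w)$ is an $\cL_{p_c}$-isometry.

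For \eqref{6.7}, apply \eqref{6.3} with the polynomial $\cS(f(z,w))\in\Rset_{N,M}[x,y]$ in the role of $g(x,y)$ and with $g(z,w)$ in the role of $f(z,w)$, so that
$$\langle\cS(f),\cS(g)\rangle_{\cL}=\big\langle\, z^{N}w^{M-1}(z^2-1)(w^2-1)\,\cS(f(z,w))\,,\,g(z,w)\,\big\rangle_{\cL_{p_c}}.$$
Since $(z^2-1)(w^2-1)=zw(z-1/z)(w-1/w)$, substituting the explicit formula for $\cS$ displayed just before the lemma yields
$$z^{N}w^{M-1}(z^2-1)(w^2-1)\,\cS(f(z,w))=z^{2N+2}w^{2M}f(1/z,1/w)-w^{2M}f(z,1/w)-z^{2N+2}f(1/z,w)+f(z,w).$$
Pairing against $g$, the last two summands give exactly $-\langle w^{2M}f(z,1/w),g\rangle_{\cL_{p_c}}$ and $\langle f,g\rangle_{\cL_{p_c}}$, which is the right-hand side of \eqref{6.7}; so it remains to show the other two pairings vanish. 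For the term $z^{2N+2}w^{2M}f(1/z,1/w)$: since $N\ge n$, write $z^{2N+2}=z^{k+2n-1}$ with $k=2(N-n)+3\ge 0$, and note that by \eqref{6.2a} both $f$ and $g$ lie in $\qt(z)\sPt^{1}_{2n_0-1,2M+1;\cL_{\pt}}[z,w]\subseteq\sPt^{1}_{2n-1,2M+1;\cL_{p_c}}[z,w]$, so \leref{le5.2} (applied with $\nt=2n$ and with $M$ taken to be $2M+1$) gives $\langle z^{2N+2}w^{2M}f(1/z,1/w),g\rangle_{\cL_{p_c}}=0$.

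Showing $\langle z^{2N+2}f(1/z,w),g\rangle_{\cL_{p_c}}=0$ is the main obstacle, precisely because it involves a reflection in $z$ alone, for which $\cL_{p_c}$ carries no invariance. I would first use \eqref{5.3a} to discard the part of $z^{2N+2}f(1/z,w)$ of $w$-degree $\le 2M$ (it is automatically $\cL_{p_c}$-orthogonal to $g$, since $z^{2N+2}f(1/z,w)$ has only nonnegative powers of $z$), which reduces the claim to the coefficient of $w^{2M+1}$. Writing $f=\qt(z)F$, $g=\qt(z)G$ with $F,G\in\sPt^{1}_{2n_0-1,2M+1;\cL_{\pt}}[z,w]$, the most transparent route seems to be a direct contour-integral evaluation: reduce the $\qt$-factors (on $\Tset^2$ one has $|\qt(z)|^2=\qt(z)\qt(1/z)$), use the palindromy $\pt(1/z,w)=z^{-2n_0}\pt(z,w)$ and \prref{pr5.3} to rewrite the integrand through $\pt(z,w)$, $\pt(z,1/w)$ and elements of $\sPt^{1}_{2n_0-1,2M+1;\cL_{\pt}}$ and $\sPt^{2}_{2n_0-1,2M+1;\cL_{\pt}}$, and then carry out the $z$-integral. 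Its integrand is analytic in $\Dset$ except at the zeros of $\pt(z,w)$ inside $\Dset$, and the residues there must cancel by virtue of the orthogonality relations \eqref{5.3a}--\eqref{5.3b} for $G$ and the direct-sum structure in \eqref{6.2a}, together with the fact that all zeros of $\qt$ lie outside $\oDset$ while all zeros of $\qtr$ lie in $\Dset$ (which holds because $q>0$ on $[-1,1]$). This verification is the technical heart of the lemma.

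Finally, injectivity follows from \eqref{6.7}. The measure $\mu_c$ is invariant under $w\mapsto 1/w$: since $p_c(z,w)=\qt(z)z^{n_0}p(x,w)$ and for $z\in\Tset$ the quantity $x=\tfrac12(z+1/z)$ is real, $p(x,\cdot)$ is a \emph{real} polynomial in $w$, whence $|p(x,1/w)|=|p(x,w)|$ on $\Tset$ and therefore $|p_c(z,1/w)|=|p_c(z,w)|$ on $\Tset^2$. Consequently $T\colon h(z,w)\mapsto w^{2M}h(z,1/w)$ is an isometry of $L^2(\Tset^2,\mu_c)$, and \eqref{6.7} with $g=f$ reads $\|\cS(f)\|_{\cL}^2=\|f\|_{\cL_{p_c}}^2-\langle w^{2M}f(z,1/w),f\rangle_{\cL_{p_c}}$. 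By Cauchy--Schwarz together with $\|w^{2M}f(z,1/w)\|_{\cL_{p_c}}=\|f\|_{\cL_{p_c}}$, we get $\langle w^{2M}f(z,1/w),f\rangle_{\cL_{p_c}}\le\|f\|_{\cL_{p_c}}^2$, with equality only if $w^{2M}f(z,1/w)=f(z,w)$; but comparing coefficients of $w^{2M+1}$ then forces $\deg_w f\le 2M$, i.e.\ $f\in\Rset_{2n-1,2M}[z,w]$, so that $\langle f,f\rangle_{\cL_{p_c}}=0$ because $f\in\sPt_{2n-1,2M+1;\cL_{p_c}}[z,w]=\Rset_{2n-1,2M+1}[z,w]\ominus\Rset_{2n-1,2M}[z,w]$. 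Hence $\|\cS(f)\|_{\cL}>0$ for every nonzero $f\in\qt(z)\sPt^{1}_{2n_0-1,2M+1;\cL_{\pt}}[z,w]$, i.e.\ $\cS$ is injective.
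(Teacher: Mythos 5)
Your expansion of $\langle\cS(f),\cS(g)\rangle_{\cL}$ via \eqref{6.3} into four pairings is exactly the paper's route, and your treatment of the $z^{2N+2}w^{2M}f(1/z,1/w)$ term via \leref{le5.2} is correct. The genuine gap is the third term $\langle z^{2N+2}f(1/z,w),g\rangle_{\cL_{p_c}}$: you identify it as ``the technical heart of the lemma'' and outline a residue computation, but you do not actually carry it out, and the orthogonality relations you cite --- \eqref{5.3a}--\eqref{5.3b} and the decomposition \eqref{6.2a} --- are not the ones that close the argument. The relation you need is \eqref{5.3e}, namely $\sPt^{1}\perp z^{k}\sPt^{2}$ for all $k\ge 0$, combined with \prref{pr5.3}. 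Concretely, writing $g=\qt(z)g_1$ with $g_1\in\sPt^{1}_{2n_0-1,2M+1;\cL_{\pt}}$, \prref{pr5.3} (applicable since $z^{2n_0}\pt(1/z,w)=\pt(z,w)$) gives $z^{2n_0-1}g_1(1/z,w)\in\sPt^{2}_{2n_0-1,2M+1;\cL_{\pt}}$, so $z^{2N+2}g_1(1/z,w)\in z^{2(N-n_0)+3}\sPt^{2}_{2n_0-1,2M+1;\cL_{\pt}}$, which by \eqref{5.3e} is $\cL_{\pt}$-orthogonal to $f_1\in\sPt^{1}_{2n_0-1,2M+1;\cL_{\pt}}$; the paper then transfers this back to $\cL_{p_c}$ (its eq.~\eqref{6.10}). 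No residue calculus is needed. Until this term is settled, \eqref{6.7} is not proved.

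Your injectivity argument, by contrast, is a genuinely different route from the paper's and is worth noting: rather than showing (as the paper does) that the leading $y^M$-coefficient of $\cS(f)$ is a nonzero polynomial by a relative-primality argument for $\qt(z)h(z)\gamma(z)$, you derive injectivity directly from the positivity of the right-hand side of \eqref{6.7}, using that $T\colon h\mapsto w^{2M}h(z,1/w)$ is a self-adjoint isometric involution of $L^2(\Tset^2,\mu_c)$, so $\langle Tf,f\rangle_{\cL_{p_c}}\le\|f\|_{\cL_{p_c}}^2$ with equality forcing $Tf=f$, hence $f\in\Rset_{2n-1,2M}[z,w]\cap\sPt_{2n-1,2M+1;\cL_{p_c}}[z,w]=\{0\}$. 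This is correct and arguably cleaner than the paper's argument, but of course it rests on \eqref{6.7}, so it inherits the gap above.
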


\begin{proof}
Suppose first that $f(z,w)\in \qt(z) \sPt_{2n_0-1,2M+1;\cL_{\pt}}^{1}[z,w]$ is a nonzero element. We will show that the coefficient of $y^{M}$ in $\cS(f(z,w))$ is a nonzero polynomial, thus proving that $\cS$ is injective. If $h(z)$ is the stable Fej\'er--Riesz factor of $p(x,0)$, then by \eqref{5.5} we have
$$f(z,w)= \qt(z)h(z)\gamma(z)w^{2M+1}\mod \Rset_{2n-1,2M}[z,w], \text{ where }0\neq \gamma(z)\in\Rset_{n_0-1}[z].$$
Since $S_{w,M-1}(\Rset_{2M}[w])\subset \Rset_{M-1}[y]$, we see that 
\begin{align*}
\cS(f(z,w))= S_{z,N}(\qt(z)h(z)\gamma(z))S_{w,M-1}(w^{2M+1}) \mod \Rset_{N,M-1}[x,y],
\end{align*}
i.e. up to a nonzero constant factor, the coefficient of $y^{M}$ is $S_{z,N}(\qt(z)h(z)\gamma(z))$. For simplicity, we set $\alpha(z)=\qt(z)h(z)\in\Rset_{n_0+2n_1}[z]$ which has no zeros in $\oDset$. If we assume that $S_{z,N}(\alpha(z)\gamma(z))=0$, then 
\begin{equation}\label{6.8}
\alpha(z)\gamma(z)=z^{2N+2}\alpha(1/z)\gamma(1/z)=(z^{2N+3-n_0}\alpha(1/z))(z^{n_0-1}\gamma(1/z)).
\end{equation}
The polynomials $\alpha(z)$ and $(z^{2N+3-n_0}\alpha(1/z))$ are relatively prime in $\Rset[z]$, because the first one 
has no zeros in $\oDset$, while all the zeros of the second one are in $\Dset$. From \eqref{6.8} we see that $\alpha(z)$ must divide $z^{n_0-1}\gamma(1/z)$ and therefore we can rewrite this equation as 
\begin{equation*}
\gamma(z)=(z^{2N+3-n_0}\alpha(1/z))\, \frac{z^{n_0-1}\gamma(1/z)}{\alpha(z)},
\end{equation*}
where the right-hand side is a polynomial of degree at least $2N+3-n_0>n_0$ which is impossible. The contradiction shows that the coefficient of $y^M$ in $\cS(f(z,w))$ is nonzero and therefore $\cS$ is injective.

For the second part, note that if we replace $g(x,y)$ in \eqref{6.3} by $\cS(g(z,w))$ we obtain 
\begin{subequations}\label{6.9}
\begin{align}
&\langle \cS(g(z,w)), \cS(f(z,w))\rangle_{\cL}\nonumber \\
&\qquad=\langle  g(z,w), f(z,w)\rangle_{\cL_{p_c}}-\langle w^{2M} g(z,1/w), f(z,w)\rangle_{\cL_{p_c}}\label{6.9a}\\
&\qquad\quad+\langle  z^{2N+2}w^{2M}g(1/z,1/w), f(z,w)\rangle_{\cL_{p_c}}\label{6.9b}\\
&\qquad\quad-\langle  z^{2N+2}g(1/z,w), f(z,w)\rangle_{\cL_{p_c}}.\label{6.9c}
\end{align}
\end{subequations}
If $f,g\in \sPt_{2n-1,2M+1;\cL_{p_c}}^{1}[z,w] $, then the term in \eqref{6.9b} is $0$ by \leref{le5.2}. If we take $ f(z,w)=\qt(z)f_1(z,w)$ and $ g(z,w)=\qt(z)g_1(z,w)$ to be two elements in  $\qt(z)\sPt_{2n_0-1,2M+1;\cL_{\pt}}^{1}[z,w]$, then the term in \eqref{6.9c} can be rewritten as follows
\begin{equation}\label{6.10}
\langle  z^{2N+2}g(1/z,w), f(z,w)\rangle_{\cL_{p_c}}= \langle  z^{2N+2}g_1(1/z,w), f_1(z,w)\rangle_{\cL_{\pt}}.
\end{equation}
Since $z^{2n_0}\pt(1/z,w)=\pt(z,w)$, \prref{pr5.3} tells us that 
$$z^{2n_0-1}g_1(1/z,w)\in \sPt_{2n_0-1,2M+1;\cL_{\pt}}^{2}[z,w].$$
This shows that $z^{2N+2}g_1(1/z,w) \in z^{2(N-n_0)+3}\sPt_{2n_0-1,2M+1;\cL_{\pt}}^{2}[z,w] $
which combined with \eqref{5.4e} proves that the term in \eqref{6.10} is $0$. Since the terms in \eqref{6.9b}-\eqref{6.9c} are zero, equation \eqref{6.9a} reduces to \eqref{6.7}.
\end{proof}
For $f(z,w)\in\Rset[z,w]$ we denote by $\cM_{f(z,w)}:\Rset[z,w]\to \Rset[z,w]$ the multiplication by $f(z,w)$, i.e. 
$$\cM_{f(z,w)}(g(z,w))=f(z,w)g(z,w).$$
Note that $\cM_{\qt(z)}: \sPt_{2n_0-1,2M+1;\cL_{\pt}}^{1}[z,w] \to \sPt_{2n-1,2M+1;\cL_{p_c}}^{1}[z,w]$ is an isometry.
Summarizing the statement so far we obtain the following theorem.
\begin{Theorem}\label{th6.2}
Let $\cL$ be the functional defined by \eqref{4.3} where $q(x)\in\Rset_{2n_1}[x]$, $p(x,w)\in \Rset_{n_0,2m}[x,w]$ are such that $q(x)>0$, $p(x,w)\neq 0$ when $(x,w)\in[-1,1]\times \oDset$ and let $\pt(z,w)=z^{n_0}p(x,w)$. Let $N\geq n_0+n_1$ and $M\geq m$. 
For  $f,g\in \sPt_{2n_0-1,2M+1;\cL_{\pt}}^{1}[z,w]$, the bilinear form 
\begin{equation}\label{6.11}
\langle f, g \rangle:=\langle  f(z,w), g(z,w)\rangle_{\cL_{\pt}}-\langle w^{2M} f(z,1/w), g(z,w)\rangle_{\cL_{\pt}},
\end{equation}
defines an inner product on  $ \sPt_{2n_0-1,2M+1;\cL_{\pt}}^{1}[z,w]$, and with this inner product the map 
$$\cS_{N,M-1}\circ \cM_{\qt(z)}: \sPt_{2n_0-1,2M+1;\cL_{\pt}}^{1}[z,w]  \to  \sPt_{N,M;\cL}[x,y], $$
is an isometry. Moreover, if $\{U^q_j(x)\}_{j=0}^{N-n_0}$ are orthonormal polynomials with respect to the measure $\frac{2}{\pi}\frac{\sqrt{1-x^2}}{q(x)}\chi_{(-1,1)}(x)dx$ on $\Rset$, and if we set 
$$p_M(y;x)=\frac{w^{M+1} p (x,1/w)- w^{-M-1} p (x,w)}{w-1/w},$$
then 
\begin{equation}\label{6.12}
\sPt_{N,M;\cL}[x,y] = \cS_{N,M-1}(\qt(z) \sPt_{2n_0-1,2M+1;\cL_{\pt}}^{1}[z,w]) \bigoplus_{k=0}^{N-n_0} \Span_{\Rset}\{p_M(y;x)U^q_k(x) \}.
\end{equation}
\end{Theorem}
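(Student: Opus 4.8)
The plan is to obtain \thref{th6.2} by packaging \leref{le6.1} together with the orthogonality and dimension facts already established in this subsection. First I would rewrite the identity \eqref{6.7} with its arguments taken in the form $F=\qt(z)f$, $G=\qt(z)g$, where $f,g\in\sPt_{2n_0-1,2M+1;\cL_{\pt}}^{1}[z,w]$, and convert the right-hand side from $\langle\,,\,\rangle_{\cL_{p_c}}$ to $\langle\,,\,\rangle_{\cL_{\pt}}$. The key observation is that on $\Tset^2$ we have $|p_c(z,w)|^2=|\qt(z)|^2|\pt(z,w)|^2$, so in both $\langle\qt(z)f,\qt(z)g\rangle_{\cL_{p_c}}$ and $\langle w^{2M}\qt(z)f(z,1/w),\qt(z)g(z,w)\rangle_{\cL_{p_c}}$ the common factor $|\qt(z)|^2$ cancels against the denominator (regardless of the $w$-argument), leaving $\langle f,g\rangle_{\cL_{\pt}}$ and $\langle w^{2M}f(z,1/w),g(z,w)\rangle_{\cL_{\pt}}$ respectively. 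Substituting into \eqref{6.7} gives
\[
\langle (\cS_{N,M-1}\circ\cM_{\qt(z)})(f),\,(\cS_{N,M-1}\circ\cM_{\qt(z)})(g)\rangle_{\cL}=\langle f,g\rangle ,
\]
with $\langle\,,\,\rangle$ the bilinear form defined in \eqref{6.11}.

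From this the first two assertions are formal. The composite $\cS_{N,M-1}\circ\cM_{\qt(z)}$ is injective on $\sPt_{2n_0-1,2M+1;\cL_{\pt}}^{1}[z,w]$: multiplication by $\qt(z)$ is obviously injective, and \leref{le6.1} asserts that $\cS_{N,M-1}$ is injective on $\qt(z)\sPt_{2n_0-1,2M+1;\cL_{\pt}}^{1}[z,w]$. Since the displayed identity exhibits $\langle\,,\,\rangle$ as the pullback, along this injective linear map, of the inner product $\langle\,,\,\rangle_{\cL}$ (which is real, symmetric, and positive definite since $\cL$ is positive), $\langle\,,\,\rangle$ is itself a symmetric positive definite bilinear form, i.e.\ an inner product; and with respect to it $\cS_{N,M-1}\circ\cM_{\qt(z)}$ is an isometry by construction. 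That its image lies in $\sPt_{N,M;\cL}[x,y]$ follows from \eqref{6.2a} (which yields $\qt(z)\sPt_{2n_0-1,2M+1;\cL_{\pt}}^{1}[z,w]\subseteq\sPt_{2n-1,2M+1;\cL_{p_c}}^{1}[z,w]$) together with \eqref{6.4}.

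For the decomposition \eqref{6.12} I would combine three ingredients. Both summands on the right sit inside $\sPt_{N,M;\cL}[x,y]$: the first by the inclusion just noted, the second because $\{p_M(y;x)U^q_k(x)\}_{k=0}^{N-n_0}$ are orthonormal elements of $\sPt_{N,M;\cL}[x,y]$ by \reref{re4.7}. The two summands are mutually orthogonal: the computation in \eqref{6.6a}--\eqref{6.6b} shows $\cS_{N,M-1}(\qt(z)\sPt_{2n_0-1,2M+1;\cL_{\pt}}^{1}[z,w])\perp p_M(y;x)\Rset_{N-n_0}[x]$, and $\bigoplus_{k=0}^{N-n_0}\Span_{\Rset}\{p_M(y;x)U^q_k(x)\}=p_M(y;x)\Rset_{N-n_0}[x]$ because the $U^q_k$ form a basis of $\Rset_{N-n_0}[x]$; hence the sum is direct. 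Finally a dimension count closes the argument: the first summand has dimension $n_0$ by the injectivity above and the equality $\dim\sPt_{2n_0-1,2M+1;\cL_{\pt}}^{1}[z,w]=n_0$ recorded after \eqref{6.2}; the second has dimension $N-n_0+1$, since the coefficient of $y^{M}$ in $p_M(y;x)U^q_k(x)$ is $2^{M}p(x,0)U^q_k(x)$, of exact degree $k$ in $x$ (as $p(x,0)$ is not the zero polynomial), so these $N-n_0+1$ polynomials are linearly independent; and $n_0+(N-n_0+1)=N+1=\dim\sPt_{N,M;\cL}[x,y]$, forcing the direct sum, being a full-dimensional subspace of $\sPt_{N,M;\cL}[x,y]$, to be all of it.

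The substantive content is all contained in \leref{le6.1}—in particular its injectivity argument and the vanishing of the terms \eqref{6.9b}--\eqref{6.9c}, which rely on \leref{le5.2} and \prref{pr5.3}. Once \leref{le6.1} is available, the remaining work here is the bookkeeping translation between the $\cL_{p_c}$- and $\cL_{\pt}$-inner products and the dimension count, neither of which I expect to be an obstacle. The one point calling for minor care is verifying that the factor $\qt(z)$ genuinely cancels in the second term of \eqref{6.7}: there $F$ is evaluated at $(z,1/w)$ while $G$ is evaluated at $(z,w)$, but in both cases the $\qt(z)$ factor is unchanged, and after forming the inner product it contributes $\qt(z)\qt(1/z)=|\qt(z)|^2$ on $\Tset^2$, which cancels against $|\qt(z)|^2$ in $|p_c(z,w)|^2$; so the reduction to $\cL_{\pt}$ goes through cleanly.
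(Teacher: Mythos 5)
Your proof is correct and follows essentially the same route as the paper, which does not give a separate proof of \thref{th6.2} but rather assembles it from \eqref{6.2}--\eqref{6.7}, \leref{le6.1}, and the remark that $\cM_{\qt(z)}$ is an isometry; your write-up simply makes explicit the bookkeeping (cancellation of $|\qt(z)|^2$, pullback positive-definiteness, dimension count) that the paper leaves implicit in its ``summarizing'' sentence.
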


\begin{Remark}\label{re6.3}
(i) We can now relax the conditions on $q(x)$ and show that the decomposition \eqref{6.12} holds even if $q(x)$ has simple zeros at $x=\pm 1$. Indeed, suppose that $q(x)=(1-x)^{\nu_1}(1+x)^{\nu_2}q_0(x)$, where $\nu_1,\nu_2\in \{0,1\}$ and $q_0(x)$ is nonzero for $x\in[-1,1]$. Then, for $\epsilon>0$, we can consider \eqref{6.12} with $q(x)$ replaced by $q(x;\epsilon)=(1+\epsilon-x)^{\nu_1}(1+\epsilon+x)^{\nu_2}q_0(x)$. Note that $\epsilon$ will not appear in the inner product \eqref{6.11}, and the only terms depending on $\epsilon$ on the right-hand side of \eqref{6.12} are $\qt(z)$ and $U^q_k(x)$, so taking the limit $\epsilon\to 0$ is straightforward.

(ii) From \thref{th3.6} with $l=1$ we know that the recurrence coefficients $a_{k+1}$ and $b_{k}$ of the polynomials $U^q_k(x)$ will be equal to $\frac{1}{2}$ and $0$, respectively, for $k\geq n_1$. Clearly, the orthonormal  elements $p_M(y;x)U^q_k(x)$ of $\sPt_{N,M;\cL}[x,y]$ will satisfy the same recurrence relation. Note that the inner product in \eqref{6.11} depends only on $M$, but not on $N$. From \eqref{6.12} it follows that the orthonormal set $\{p_M(y;x)U^q_k(x)\}$ can be extended to an orthonormal basis of $\sPt_{N,M;\cL}[x,y]$ by adding elements which also satisfy a Chebyshev relation. The precise statement is given below.
\end{Remark}

\begin{Corollary}\label{co6.4}
With the notations in \thref{th6.2}, let $\{\phi_j^M(z,w)\}_{j=1}^{n_0}$ be an orthonormal basis of $\sPt_{2n_0-1,2M+1;\cL_{\pt}}^{1}[z,w]$ with respect to the inner product in \eqref{6.11}, and let
$$r_{j,N}^M(x,y)=\cS_{N,M-1}(\qt(z)\phi_j^M(z,w)).$$
The set 
$$ \{ p_M(y;x)U^q_k(x): k=0,\dots,N-n_0\} \cup \{r_{j,N}^M(x,y):j=1,\dots,n_0\}$$
is an orthonormal basis of $\sPt_{N,M;\cL}[x,y]$ and the elements $r_{j,N}^M(x,y)$ satisfy the Chebyshev relation 
\begin{equation}
r_{j,N}^M(x,y)+r_{j,N+2}^M(x,y)=2xr_{j,N+1}^M(x,y), \quad\text{ for } j=1,\dots,n_0.
\end{equation}
\end{Corollary}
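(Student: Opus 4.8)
The plan is to derive \coref{co6.4} directly from \thref{th6.2}, \reref{re4.7}, and an elementary three-term identity for the one-variable Szeg\H{o} map $S_{z,N}$. First I would record the degree bookkeeping: since $\phi_j^M(z,w)\in\sPt_{2n_0-1,2M+1;\cL_{\pt}}^{1}[z,w]\subset\Rset_{2n_0-1,2M+1}[z,w]$ and $\qt(z)\in\Rset_{2n_1}[z]$, we have $\qt(z)\phi_j^M(z,w)\in\Rset_{2n-1,2M+1}[z,w]$, so $S_{w,M-1}$ maps it into $\Rset_{2n-1,M}[z,y]$ and then $S_{z,N}$ maps it into $\Rset_{N,M}[x,y]$ for every $N\geq n=n_0+n_1$; in particular $r_{j,N}^M(x,y)$ is well defined and lies in $\sPt_{N,M;\cL}[x,y]$ by \eqref{6.4}.

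For the basis statement, I would argue as follows. By \thref{th6.2} the map $\cS_{N,M-1}\circ\cM_{\qt(z)}$ is an isometry of $\sPt_{2n_0-1,2M+1;\cL_{\pt}}^{1}[z,w]$, equipped with the inner product \eqref{6.11}, onto $\cS_{N,M-1}(\qt(z)\sPt_{2n_0-1,2M+1;\cL_{\pt}}^{1}[z,w])$; hence the images $r_{j,N}^M=\cS_{N,M-1}(\qt(z)\phi_j^M(z,w))$ of the orthonormal basis $\{\phi_j^M\}_{j=1}^{n_0}$ form an orthonormal basis of that $n_0$-dimensional subspace of $\sPt_{N,M;\cL}[x,y]$. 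On the other hand, \reref{re4.7} gives that $\{p_M(y;x)U^q_k(x)\}_{k=0}^{N-n_0}$ is an orthonormal set in $\sPt_{N,M;\cL}[x,y]$; since it consists of $N-n_0+1$ vectors lying in $p_M(y;x)\Rset_{N-n_0}[x]$ and $\{U^q_k\}_{k=0}^{N-n_0}$ is a basis of $\Rset_{N-n_0}[x]$, it is an orthonormal basis of $p_M(y;x)\Rset_{N-n_0}[x]$. Finally, the orthogonal direct sum decomposition \eqref{6.12} in \thref{th6.2} (the orthogonality being the inclusion into $\sPt_{N,M;\cL}[x,y]\ominus(p_M(y;x)\Rset_{N-n_0}[x])$ established before \leref{le6.1}) identifies $\sPt_{N,M;\cL}[x,y]$ with the orthogonal sum of these two subspaces, so the union of the two bases is an orthonormal basis of $\sPt_{N,M;\cL}[x,y]$.

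For the Chebyshev relation, set $g_j(z,y)=S_{w,M-1}(\qt(z)\phi_j^M(z,w))$, which does not depend on $N$, so that $r_{j,N}^M(x,y)=S_{z,N}(g_j(z,y))$, where $S_{z,N}$ acts only on the $z$-dependence. A short computation with $x=\tfrac12(z+1/z)$ shows that for any Laurent polynomial $f$,
\begin{align*}
S_{z,N}(f)+S_{z,N+2}(f)&=\frac{(z^{N+1}+z^{N+3})f(1/z)-(z^{-N-1}+z^{-N-3})f(z)}{z-1/z}\\
&=(z+1/z)\,S_{z,N+1}(f)=2x\,S_{z,N+1}(f).
\end{align*}
Applying this with $f=g_j(z,y)$ gives $r_{j,N}^M(x,y)+r_{j,N+2}^M(x,y)=2x\,r_{j,N+1}^M(x,y)$, as claimed.

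There is no real obstacle here: the substantive content — the inner-product formula \eqref{6.7}, the injectivity of $\cS$ on $\qt(z)\sPt_{2n_0-1,2M+1;\cL_{\pt}}^{1}[z,w]$, and the decomposition \eqref{6.12} — has already been established in \leref{le6.1} and \thref{th6.2}, and the three-term identity for $S_{z,N}$ is elementary. The only points requiring a little care are the degree counts ensuring the Szeg\H{o} maps land in the right spaces for all $N\geq n_0+n_1$, $M\geq m$, and the identification of $\bigoplus_{k=0}^{N-n_0}\Span_{\Rset}\{p_M(y;x)U^q_k(x)\}$ with $p_M(y;x)\Rset_{N-n_0}[x]$.
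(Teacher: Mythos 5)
Your proof is correct and follows essentially the same route the paper intends: Theorem 6.2 (the isometry and the orthogonal decomposition \eqref{6.12}) gives the orthonormal basis, and Remark 6.3(ii)'s observation that the inner product \eqref{6.11} is $N$-independent, combined with the elementary three-term identity for $S_{z,N}$ that you verify, gives the Chebyshev relation. The paper leaves this argument implicit; you have just written it out.
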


\begin{Remark}\label{re6.5} 
In general, the last term in \eqref{6.11} need not be zero, and therefore, $\cS$ is not an isometry, see for instance the example in \seref{ss7.2}, and the discussion in Subsection~\ref{ss7.2.1}.
\end{Remark}

\subsection{Szeg\H{o} mapping for the Bernstein--Szeg\H{o} weight}
In this subsection, we construct {\em explicit orthonormal bases} of the spaces $\sPt_{N,M;\cL}$ and $\sP_{N,M;\cL}$ for all $N$ and $M$ sufficiently large, when the Bernstein--Szeg\H{o} weight satisfies certain nonvanishing conditions. 
This can be regarded as a far-reaching extension of formula \eqref{1.4} on the real line, where $q$ is replaced by finitely elements in fixed orthonormal bases of polynomials on $\Tset^{2}$ determined from the weight. Here we see explicitly how the  extra orthogonality conditions replace the Cauchy--Goursat  theorem.

Throughout this subsection we assume that \\ 
$\bullet$ $\omega(z,w)\in\Rset_{n_0,m_0}[z,w]$ is nonzero for $(z,w)\in \oDset^2$, \\
$\bullet$ $q_1(x)\in \Rset_{2n_1}[x]$ is positive for $x\in [-1,1]$,  \\
$\bullet$  $q_2(y)\in \Rset_{2m_1}[y]$ is positive for $y\in [-1,1]$,\\
and we set $n=n_0+n_1$, $m=m_0+m_1$. The conditions on $q_1(x)$ and $q_2(y)$ will be relaxed later, see \reref{re6.7}(i).
Let $\qt_1(z)\in\Rset_{2n_1}[z]$ and $\qt_2(w)\in\Rset_{2m_1}[w]$ 
be the stable Fej\'er--Riesz factors of $q_1$ and $q_2$, respectively.
We can apply the constructions in the previous subsection with $q(x)$ replaced by $q_1(x)$ and 
\begin{equation}\label{6.14}
\pt(z,w)=\omega(z,w) \left( \qt_2(w) z^{n_0}\omega(1/z,w)\right).
\end{equation}
By \thref{th5.5}, we have
\begin{equation}\label{6.15}
\sPt_{2n_0-1,2m;\cL_{\pt}}^{1}[z,w]=w^{2m_1+m_0} \qt_2(1/w) \omega(z,1/w) \sPt_{n_0-1,m_0;\cL_{\omega}}[z,w],
\end{equation}
and for $M\geq m$
\begin{equation}\label{6.16}
\sPt_{2n_0-1,2M+1;\cL_{\pt}}^{1}[z,w]=w^{2M+1-m_0} \qt_2(1/w) \omega(z,1/w) \sPt_{n_0-1,m_0;\cL_{\omega}}[z,w].
\end{equation}
Using the last equation we can show that the last term in \eqref{6.11} vanishes, and therefore we do not need to modify the inner product on the space $\sPt_{2n_0-1,2M+1;\cL_{\pt}}^{1}$. Indeed, if $f,g\in \sPt_{2n_0-1,2M+1;\cL_{\pt}}^{1}$, then from \eqref{6.16} we have
\begin{align*}
f(z,w)&=w^{2M+1-m_0} \qt_2(1/w) \omega(z,1/w) f_1(z,w)\\
g(z,w)&=w^{2M+1-m_0} \qt_2(1/w) \omega(z,1/w) g_1(z,w),
\end{align*}
where $f_1,g_1\in  \sPt_{n_0-1,m_0;\cL_{\omega}}[z,w]\subset \Rset_{n_0-1,m_0}[z,w]$. Substituting these formulas in $\langle g(z,w),w^{2M} f(z,1/w)\rangle_{\cL_{\pt}}$ and computing just the $w$-integral, up to a constant factor, we obtain
$$\oint_{\Tset}\frac{w^{2(M-m)+2} g_1(z,w) (w^{2m_1} \qt_2(1/w)) f_1(1/z,w) }{ \qt_2(w) \omega(1/z,w)  \omega(z,w)} \frac{dw}{w}.$$
Since $M\geq m$, the numerator is a polynomial of $w$ while the denominator is nonzero for $w\in\oDset$, and therefore the integral is $0$ by the Cauchy--Goursat  theorem.

If we use the involution $\cR_w^{m_0}$  on $\Cset[z^{\pm 1},w^{\pm 1}]$ defined by 
$$\cR_w^{m_0}(g(z,w))=w^{m_0}g(z,1/w),$$
then a computation similar to the one in \eqref{6.5} shows that
\begin{equation*}
S_{w,M-1}\circ \cM_{w^{2M+1-m_0}g(z,1/w)}=-S_{w,M}\circ \cM_{g(z,w)}\circ \cR_w^{m_0}.
\end{equation*}

Combining the above with formula \eqref{6.16} and \thref{th6.2} we obtain the following theorem for the Bernstein--Szeg\H{o} weight defined in \eqref{4.29} and \eqref{4.31}.

\begin{Theorem}\label{th6.6}
Let $\cL$ be the functional 
\begin{equation}\label{new3}
\cL(f)=\frac{4}{\pi^2}\iint_{(-1,1)^2}\frac{f(x,y)\sqrt{1-x^2}\sqrt{1-y^2}}{q_1(x)q_2(y)\omega(z,w)\omega(1/z,w)\omega(z,1/w)\omega(1/z,1/w)}\,dx\,dy, 
\end{equation}
where\\
$\bullet$ $\omega(z,w)\in\Rset_{n_0,m_0}[z,w]$ is nonzero for $(z,w)\in \oDset^2$, \\
$\bullet$ $q_1(x)\in \Rset_{2n_1}[x]$ is positive for $x\in [-1,1]$,  \\
$\bullet$  $q_2(y)\in \Rset_{2m_1}[y]$ is positive for $y\in [-1,1]$,\\
and let $\qt_1(z)$ and $\qt_2(w)$  be the stable Fej\'er--Riesz factors of $q_1(x)$ and $q_2(y)$, respectively. 
For $N\geq n_0+n_1$ and $M\geq m_0+m_1$, define
\begin{subequations}
\begin{align}
\cTt_{N,M}&=\cS_{N,M}\circ \cM_{\qt_1 (z)\qt_2 (w)\omega(z,w)}\circ \cR_w^{m_0},\\
\cT_{N,M}&=\cS_{N,M}\circ \cM_{\qt_1 (z)\qt_2 (w)\omega(z,w)}\circ \cR_z^{n_0}.
\end{align}
\end{subequations}
Then
\begin{subequations}\label{6.18}
\begin{align}
&\cTt_{N,M}:  \sPt_{n_0-1,m_0;\cL_{\omega}}[z,w]\to  \sPt_{N,M;\cL}[x,y], \\
&\cT_{N,M}:  \sP_{n_0,m_0-1;\cL_{\omega}}[z,w]\to  \sP_{N,M;\cL}[x,y],
\end{align}
\end{subequations}
are isometries. Moreover, if \\
$\bullet$ $\{U^{q_1}_j(x)\}_{j=0}^{N-n_0}$ are orthonormal polynomials with respect to 
$\frac{2}{\pi}\frac{\sqrt{1-x^2}}{q_1(x)}\chi_{(-1,1)}(x)dx$, \\ 
$\bullet$ $\{U^{q_2}_j(y)\}_{j=0}^{M-m_0}$ are orthonormal polynomials with respect to 
$\frac{2}{\pi}\frac{\sqrt{1-y^2}}{q_2(y)}\chi_{(-1,1)}(y)dy$,\\ 
and if we set 
\begin{subequations}\label{6.19}
\begin{align}
&p (x,w)=\qt_2(w)\omega(z,w)\omega(1/z,w), && p_M(y;x)=\frac{w^{M+1} p (x,1/w)- w^{-M-1} p (x,w)}{w-1/w}, \label{6.19a}\\
&\ph (z,y)=\qt_1(z)\omega(z,w)\omega(z,1/w), && \ph_N(x;y)=\frac{z^{N+1} \ph (1/z,y)- z^{-N-1} \ph (z,y)}{z-1/z},\label{6.19b}
\end{align}
\end{subequations}
then 
\begin{subequations}\label{6.20}
\begin{align}
\sPt_{N,M;\cL}[x,y] &= \cTt_{N,M}(\sPt_{n_0-1,m_0;\cL_{\omega}}[z,w]) \bigoplus_{k=0}^{N-n_0} \Span_{\Rset}\{p_M(y;x)U^{q_1}_k(x) \},\label{6.20a}\\
\sP_{N,M;\cL}[x,y] &= \cT_{N,M}(\sP_{n_0,m_0-1;\cL_{\omega}}[z,w]) \bigoplus_{k=0}^{M-m_0} \Span_{\Rset}\{\ph_N(x;y)U^{q_2}_k(y) \}.\label{6.20b}
\end{align}
\end{subequations}
\end{Theorem}

\begin{Remark}\label{re6.7}
(i) Similarly to \reref{re6.3}(i), we can relax the conditions on $q_1(x)$ and $q_2(y)$ and show that the equations \eqref{6.18}-\eqref{6.20} hold even when $q_1(x)$ and $q_2(y)$ have simple zeros at $x=\pm 1$ and $y=\pm 1$. 

(ii) Note that the spaces $ \sPt_{n_0-1,m_0;\cL_{\omega}}[z,w]$ and $\sP_{n_0,m_0-1;\cL_{\omega}}[z,w]$ in  \eqref{6.18} and \eqref{6.20} are independent now of $N$ and $M$. This means that, besides the polynomials $q_{1},q_{2}$ and $\omega$ in the weight in \eqref{new3}, we can take $n_{0}$ polynomials which form an orthonormal basis of $ \sPt_{n_0-1,m_0;\cL_{\omega}}[z,w]$, $m_{0}$ polynomials which form an orthonormal basis of  $\sP_{n_0,m_0-1;\cL_{\omega}}[z,w]$ and we can use them to construct orthonormal bases of $\sPt_{N,M;\cL}[x,y]$ and $\sP_{N,M;\cL}[x,y] $ using the decompositions in equations \eqref{6.20}. Moreover, the multiplications by $x$ and $y$ with respect to these bases will be represented by Chebyshev relations. In particular, this provides a new proof of equations \eqref{4.32} in \thref{th4.11} in the case when $\omega(z,w)\neq 0$ for $(z,w)\in\oDset^2$.
\end{Remark}

\section{Explicit examples}\label{se7}

In this section we include examples which illustrate different aspects of the main results. In both examples, we start with a positive functional $\cL:\Rset_{2n,2m}\to\Rset$ by displaying its moment matrix $\fM=[h_{k,l}]$ where $h_{k,l}=\cL(x^ky^l)$ for $0\leq k\leq 2n$ and $0\leq l\leq 2m$.

\subsection{Example 1: Bernstein--Szeg\H{o} weight}\label{ss7.1}
Let $\cL:\Rset_{4,2}\to\Rset$ be the functional with moment matrix
\begin{equation}\label{7.1}
\fM=\left[
\begin{array}{ccc}
 1 & \frac{c_1 c_2}{2} & \frac{c_1^2 c_2^2+1 }{4}\\
 \frac{c_2}{2} & \frac{c_1 (c_2^2+1)}{4} & \frac{c_2 (c_1^2 c_2^2 +c_1^2+1)}{8}  \\
 \frac{ c_2^2+1}{4} & \frac{c_1 c_2 (c_2^2+2)}{8}& \frac{ (c_2^2+1) (c_1^2 c_2^2 +c_1^2+1)}{16} \\
 \frac{c_2 (c_2^2+2)}{8} & \frac{c_1 (c_2^2+1) (c_2^2+2)}{16} & \frac{c_2 (c_1^2 c_2^4+3 c_1^2 c_2^2+3 c_1^2+c_2^2+2)}{32} \\
 \frac{(c_2^2+1) (c_2^2+2)}{16} & \frac{c_1 c_2 (c_2^4+4 c_2^2+5)}{32} & \frac{(c_2^2+1) (c_1^2 c_2^4+3 c_1^2 c_2^2+3 c_1^2+c_2^2+2)}{64} \\
\end{array}
\right],
\end{equation}
where $c_1,c_2$ are free real parameters. This example was considered in \cite[Section 4]{GeI1}, using the parameters $s_{i,j}$ in \cite{DGIM} related to $c_1$ and $c_2$ above as follows: $c_1=s_{1,1}$, $c_2=2s_{1,0}$. In particular, the results in these papers imply that if 
\begin{equation}\label{7.2}
c_1\in(-1,1)\qquad \text{ and }\qquad c_2\in\Rset,
\end{equation}
then the functional $\cL$ 
extends to a positive linear functional on the space $\Rset[x,y]$ of all polynomials and equations \eqref{4.32} hold with $n=2$ and $m=1$. By abuse of notation, we will denote the extension also by $\cL$. 
The case $c_1=c_2=0$ corresponds to the Chebyshev measure on $\Rset^2$ in \eqref{1.8}.
Using the standard basis $\cB_1=(1,y)$ of $\Rset_1[y]$ we compute the vector orthonormal polynomials and we obtain
\begin{subequations}\label{7.3}
\begin{align}
P_{0,1}(x,y)&= 
\left[
\begin{array}{c}
1\\
2y-c_1c_2  \\
\end{array}
\right],\label{7.3a}\\
P_{1,1}(x,y)&= 
\left[
\begin{array}{c}
 \frac{2 x-2 c_1 y+c_1^2c_2-c_2}{\sqrt{1-c_1^2}} \\
4 x y -2 c_2 y-c_1 \\
\end{array}
\right],\label{7.3b}\\
P_{2,1}(x,y)&=\left[
\begin{array}{c}
 \frac{4 x^2-4 c_1 x y-2 c_2 x+2 c_1 c_2 y+c_1^2-1}{\sqrt{1-c_1^2}} \\
8 x^2 y -4 c_2 x y-2 c_1 x-2 y+c_1 c_2 \\
\end{array}
\right],\label{7.3c}\\
\intertext{and therefore}
A_{2,1}&=\frac{1}{2}I_{2}.\label{7.3d}
\end{align}
\end{subequations}
Substituting the above formulas into the left-hand side of \eqref{4.8} and following the steps outlined in Remarks \ref{re2.7} and \ref{re4.3}, we see that \eqref{4.8} holds when $n=2$, $m=1$ with
\begin{subequations}\label{7.4}
\begin{equation}\label{7.4a}
q(x)=1+c_2^2-2c_2x=(1-c_2z)(1-c_2/z)
\end{equation}
and 
\begin{equation}\label{7.4b}
p_1(y;x)=\frac{2}{\sqrt{1-c_1^2}}(y-c_1x), \qquad p_0(y;x)=\sqrt{1-c_1^2}.
\end{equation}
\end{subequations}
Equivalently, this means that \eqref{4.4} holds if we set $p(x,w)=w(p_1(y;x)-wp_0(y;x))$, or explicitly 
\begin{equation}\label{7.5}
p(x,w)=\frac{1}{\sqrt{1-c_1^2}}(c_1^2w^2-2c_1xw+1)=\frac{1}{\sqrt{1-c_1^2}}(c_1w-z)(c_1w-1/z).
\end{equation}
Equation \eqref{7.2} shows that $p(x,w)\neq 0$ for $(x,w)\in (-1,1)^2$, i.e. condition (a) in \thref{th4.1}(II) holds. For (b), we compute $\hat{\Psi}^1_{2}(z)$ and we find
\begin{equation}\label{7.6}
\hat{\Psi}^1_{2}(z)=(1-c_2z) \left[
\begin{array}{cc}
 \frac{c_1^2 z^2+1}{\sqrt{1-c_1^2}} & -\frac{2 c_1 z}{\sqrt{1-c_1^2}} \\
 -c_1 z & 2 \\
\end{array}
\right],  \text{ hence }\det (\hat{\Psi}^1_{2}(z)) = \frac{2(1-c_2z)^2 }{\sqrt{1-c_1^2}}.
\end{equation}
Therefore, $\hat{\Psi}^1_{2}(z)$ will be invertible for $z\in (-1,1)$ if and only if $c_2\in [-1,1]$. We continue by dividing this subsection into 2 parts. In the first part, we consider the case when $c_2\in [-1,1]$ and we illustrate Theorems \ref{th4.1} and \ref{th6.6}. In the second part, we describe the measure for the extension of  $\cL$ when $c_2\not\in [-1,1]$, which completes, extends and corrects the results in \cite[Theorem 4.1]{GeI1} where one range of the parameters was overlooked.

\subsubsection{The case $c_2\in [-1,1]$}\label{ss7.1.1} Throughout Subsection~\ref{ss7.1.1} we assume that 
$$(c_1,c_2)\in (-1,1)\times [-1,1].$$
Since both conditions in  \thref{th4.1}(II) are satisfied, it follows that \eqref{4.3} holds where $q(x)$ and $p(x,w)$ are given in equations \eqref{7.4a} and \eqref{7.5}, respectively. Thus $\cL$ can be extended to a positive functional on $\Rset[x,y]$, which we will denote also by $\cL$. If we set
 \begin{equation}\label{7.7}
\omega(z,w)=\frac{1-c_1zw}{\sqrt[4]{1-c_1^2}},
\end{equation}
then we can apply \thref{th4.11} with $q_1(x)=q(x)$ given in \eqref{7.4a} and $q_2(y)=1$ to conclude that equations \eqref{4.32} hold with $n=2$ and $m=1$. Moreover, we can use \thref{th6.6} to construct simple explicit bases of $\sP_{N,M;\cL}[x,y]$ and $\sPt_{N,M;\cL}[x,y]$ for all $N\geq 2$, $M\geq1$, using $p(x,w)$ in \eqref{7.5} and 
\begin{equation}\label{7.8}
\ph(z,y)=\frac{(1-c_2z)(1-2c_1zy+c_1^2z^2)}{\sqrt{1-c_1^2}}.
\end{equation}
Since $p(x,0)=\frac{1}{\sqrt{1-c_1^2}}$ and $\ph(0,y)=\frac{1}{\sqrt{1-c_1^2}}$ are positive constants, we can obtain orthonormal bases of $\sP_{N,M;\cL}[x,y]$ and $\sPt_{N,M;\cL}[x,y]$ corresponding to the standard bases of $\Rset_M[y]$ and $\Rset_N[x]$, see \reref{re4.7}(ii). We illustrate this for the space $\sP_{N,M;\cL}[x,y]$ which also shows how to obtain \eqref{7.3c} from \thref{th6.6}. From \eqref{7.8} and \eqref{6.19b} it follows that
\begin{equation}
\ph_N(x;y)=\frac{1}{\sqrt{1-c_1^2}}(U_N(x)-(c_2+2c_1y)U_{N-1}(x)+c_1(c_1 + 2 c_2 y)U_{N-2}(x)-c_1^2 c_2U_{N-3}(x)),
\end{equation}
where $U_k(x)=\frac{z^{k+1}-z^{-k-1}}{z-z^{-1}}$ are the Chebyshev polynomials of the second kind, with the convention that $U_{-1}(x)=0$ and $U_{k}(x)=-U_{-k-2}(x)$ if $k<-1$.
Note next that 
$$\cL_{\omega}(z^kw^l)=\frac{\sqrt{1-c_1^2}}{(2\pi)^2}\iint_{\Tset^2}z^kw^l\, \frac{|dz|\,|dw|}{(1-c_1zw)(1-c_1/(zw))},$$
and the space $\sP_{1,0;\cL_{\omega}}[z,w]$ is spanned by the unit element $\phi(z)=\sqrt[4]{1-c_1^2}\,z$. Since  $\qt(z)=1-c_2z$ is the stable Fej\'er--Riesz factor of $q(x)$, we have
\begin{align*}
&\cT_{N,M}(\phi(z))=\cS_{N,M}((1-c_2z)(1-c_1zw))\\
&\quad=U_{N}(x)U_{M}(y)-c_2 U_{N-1}(x)U_{M}(y) - c_1U_{N-1}(x)U_{M-1}(y) + c_1 c_2 U_{N-2}(x)U_{M-1}(y).
\end{align*}
From the above computations and equation \eqref{6.20b} it follows that for $N\geq 2$ and $M\geq 1$ the entries of the vector polynomials $P_{N,M}(x,y)$ with respect to the standard basis $\cB_M=(1,y,\dots,y^M)$ of $\Rset_M[y]$ are given by 
\begin{subequations}\label{7.10}
\begin{equation}
p_{N,M}^{k}(x,y)=\frac{(U_N(x)-(c_2+2c_1y)U_{N-1}(x)+c_1(c_1 + 2 c_2 y)U_{N-2}(x)-c_1^2 c_2U_{N-3}(x))U_k(y)}{\sqrt{1-c_1^2}},
\end{equation}
for $k=0,1,\dots,M-1$, and 
\begin{equation}
\begin{split}
p_{N,M}^{M}(x,y)&=U_{N}(x)U_{M}(y)-c_2 U_{N-1}(x)U_{M}(y) \\
&\qquad - c_1U_{N-1}(x)U_{M-1}(y) + c_1 c_2 U_{N-2}(x)U_{M-1}(y).
\end{split}
\end{equation}
\end{subequations}
It is easy to see that  equations \eqref{7.10} with $N=2$ and $M=1$ lead to \eqref{7.3c}. Note that the formulas in equations \eqref{7.10} work also when $N=M=1$ and give \eqref{7.3b}, which explains \eqref{7.3d} (cf. with the last part of \thref{th3.4}).
Similar formulas for the orthonormal basis $\{\pt_{N,M}^{j}(x,y)\}$ of  $\sPt_{N,M;\cL}[x,y]$ stem from \eqref{6.20a}.

\subsubsection{The case $c_2\not\in [-1,1]$} In this subsection we analyze the possible cases when $c_1\in(-1,1)$ while $c_2\not\in [-1,1]$. We set 
$$x_0=\frac{1}{2}\left(c_2+\frac{1}{c_2}\right),\quad  y_0=\frac{1}{2}\left(c_1c_2+\frac{1}{c_1c_2}\right), \text{ when }c_1\neq 0,$$ 
and we show that
\begin{enumerate}[(i)]
\item if $|c_1c_2|\leq 1<|c_2|$, then 
\begin{align}
\cL(f)&=\frac{4}{\pi^2}\iint_{(-1,1)^2}\frac{f(x,y)\,\sqrt{1-x^2}\sqrt{1-y^2}}{q(x)p(x,w)p(x,1/w)}dx\,dy\nonumber\\
&\quad
+\frac{2(c_2^2-1)}{\pi c_2^2}\int_{(-1,1)}\frac{ f(x_0,y)}{p(x_0,w)p(x_0,1/w)}\sqrt{1-y^2}dy,\label{7.11}
\end{align}
\item if $|c_1|<1<|c_1c_2|$, then 
\begin{align}
\cL(f)&=\frac{4}{\pi^2}\iint_{(-1,1)^2}\frac{f(x,y)\,\sqrt{1-x^2}\sqrt{1-y^2}}{q(x)p(x,w)p(x,1/w)}dx\,dy\nonumber\\
&\quad
+\frac{2(c_2^2-1)}{\pi c_2^2}\int_{(-1,1)}\frac{ f(x_0,y)}{p(x_0,w)p(x_0,1/w)}\sqrt{1-y^2}dy
+\frac{c_1^2c_2^2-1}{c_1^2c_2^2}f(x_0,y_0).\label{7.12}
\end{align}
\end{enumerate}
For $c_1,c_2$ satisfying \eqref{7.2} we define the functional $\cL_0:\Rset[x,y]\to \Rset$ by 
\begin{equation*}
\cL_0(f)=\frac{4}{\pi^2}\iint_{(-1,1)^2}f(x,y)\frac{\sqrt{1-x^2}\sqrt{1-y^2}}{q(x)p(x,w)p(x,1/w)}\,dx\,dy,
\end{equation*}
which coincides with $\cL$ when $(c_1,c_2)\in (-1,1)\times [-1,1]$ by the results in Subsection~\ref{ss7.1.1}. 
Write $z=e^{i\theta}$, $w=e^{i\phi}$, or equivalently $x=\cos\theta$, $y=\cos\phi$, and use the invariance of the integrand of $\cL_0$ under the transformations $z\to1/z$ and $w\to1/w$ to obtain
\begin{equation*}
\cL_0(f)=\frac{1}{\pi^2}\iint_{(-\pi,\pi)^2}\frac{f(\cos\theta,\cos\phi)\sin^2\theta\sin^2\phi} {q(x)p(x,w)p(x,1/w)} \,d\theta\, d\phi.
\end{equation*}

If $c_1=0$, equation \eqref{7.11} follows easily from the one-dimensional theory, so we will assume below that $c_1\neq 0$.
We begin with the region $0<|c_1|<|c_2|< 1$ and evaluate the above integral by residues. Thus we write
\begin{equation}\label{7.13}
\cL_0(f)=\frac{1}{\pi}\int_{-\pi}^{\pi}\cI(f,w)\sin^2\phi d\phi 
\end{equation}
where
\begin{equation}\label{7.14}
\cI(f,w)=\frac{1}{\pi}\int_{-\pi}^{\pi}\frac{f(\cos\theta,\cos\phi)\sin^2\theta}{q(x)p(x,w)p(x,1/w)}\,d\theta 
=-\frac{1-c_1^2}{4\pi i}\oint_{\Tset}\frac{f(x,y)w^2(1-z^2)^2}{\tau(z,w)}dz, 
\end{equation}
and
$$
\tau(z,w)=(c_1zw - 1)(c_1z - w)(zw - c_1)(c_1w - z)(c_2z - 1)(z -c_2).
$$
The residues of the above contour integral are at $z=0$, $z=c_1w$, $z=c_1/w$, and $z=c_2$. Hence
$$
\cI(f,w)=R_0(w)+R_1(w)+R_2(w)+R_3(w)
$$
where 
\begin{subequations}
\begin{align}
R_0(w)&=-\frac{1-c_1^2}{2}\text{res}_{z=0}\frac{f(x,y)w^2(1-z^2)^2}{\tau(z,w)},\label{7.15a}\\
R_1(w)&=-\frac{1}{2c_1}\frac{f(\frac{1}{2}(c_1w+\frac{1}{c_1w}),y)w(w^2c_1^2-1)}{(w^2 - 1)(c_1w - c_2)(c_1c_2w-1)},\label{7.15b}\\
R_2(w)&=R_1(1/w),\\
\intertext{and}
R_3(w)&=-\frac{(1-c_1^2)(1-c_2^2 )f(x_0,y)w^2}{2(c_1c_2w - 1)(w-c_1c_2)(c_2w - c_1)(c_1w - c_2)}.\label{7.15d}
\end{align}
\end{subequations}
Thus $\cL(f)=\cL_0(f)$ when $0<|c_1|<|c_2|< 1$ and
\begin{align}
\cL(f)&=\frac{1}{\pi}\int_{-\pi}^{\pi}(R_0(w)+2R_1(w)+R_3(w))\sin^2\phi d\phi \nonumber \\
&=-\frac{1}{4\pi i}\oint_{\Tset}(R_0(w)+2R_1(w)+R_3(w))\frac{(1-w^2)^2}{w^3} dw.\label{7.16}
\end{align}
Applying the Cauchy's residue theorem we obtain
\begin{align}
&\cL(f)=- \frac{1}{2}\res_{w=0}  \left( R_0(w)\frac{(1-w^2)^2}{w^3} \right)- \res_{w=0}  \left( R_1(w)\frac{(1-w^2)^2}{w^3} \right) \nonumber  \\
&\qquad-\frac{1}{2}\res_{w=0}  \left( R_3(w)\frac{(1-w^2)^2}{w^3} \right)- \frac{1}{2}\sum_{j=0}^{1}\res_{w=w_j}  \left( R_3(w)\frac{(1-w^2)^2}{w^3} \right),\label{7.17}
\end{align}
where $w_0=c_1c_2$, $w_1=c_1/c_2$. Equation~\eqref{7.17} provides an explicit formal extension of the functional $\cL$ for all $c_1,c_2$ satisfying \eqref{7.2}. In the remaining part, we explain how this formula coincides with the integral formulas given in \eqref{7.11}-\eqref{7.12}, depending on the values of the parameters. 

We consider now the region $|c_1c_2|\leq 1<|c_2|$ in (i). In this case the residue at $z=c_2$ in the evaluation of \eqref{7.14} is replaced by the residue at $z=\frac{1}{c_2}$ which gives
\begin{equation}\label{7.18}
\hat R_3(w)=\frac{(1-c_1^2)(1-c_2^2 )f(x_0,y)w^2}{2(c_1c_2w - 1)(w-c_1c_2)(c_2w - c_1)(c_1w - c_2)}=-R_3(w).
\end{equation}
Since, 
$$-\res_{w=0}  \left( R_3(w)\frac{(1-w^2)^2}{w^3} \right)-\sum_{j=0}^{1}\res_{w=w_j}  \left( R_3(w)\frac{(1-w^2)^2}{w^3} \right)=\frac{c_2^2-1}{c_2^2} f(x_0,y),$$
equations \eqref{7.13}, \eqref{7.14}, \eqref{7.17} and \eqref{7.18} yield
$$\cL(f)-\cL_0(f)=\frac{2(c_2^2-1)}{\pi c_2^2}\int_{(-1,1)}\frac{ f(x_0,y)}{p(x_0,w)p(x_0,1/w)}\sqrt{1-y^2}dy,$$
proving \eqref{7.11}.

We finally consider the case when $1<|c_1c_2|$. We denote by $\cL_1$ the functional in \eqref{7.11}, which coincides with $\cL$ and \eqref{7.16} holds for $\cL$ and $\cL_1$ when $|c_1c_2|\leq 1<|c_2|$. Note that if we apply the residue theorem for $\cL_1$ in \eqref{7.16} when $1<|c_1c_2|$, we will have to compute also the residues of $R_1(w)$ at $w=1/w_0$ and replace the residue of $R_3(w)$ with the residue at $w=1/w_0$. 
Since
\begin{align*}
&\res_{w=1/w_0}  \left( R_1(w)\frac{(1-w^2)^2}{w^3} \right)=\frac{c_1^2c_2^2-1}{2c_1^2c_2^2}f(x_0,y_0)\\
&\res_{w=1/w_0}  \left( R_3(w)\frac{(1-w^2)^2}{w^3} \right)=-\res_{w=w_0}  \left( R_3(w)\frac{(1-w^2)^2}{w^3} \right)= \frac{c_1^2c_2^2-1}{2c_1^2c_2^2}f(x_0,y_0),
\end{align*}
we see that
\begin{align*}
\cL(f)-\cL_1(f)&=-\frac{1}{2}\res_{w=w_0}  \left( R_3(w)\frac{(1-w^2)^2}{w^3} \right)+\frac{1}{2}\res_{w=1/w_0}  \left( R_3(w)\frac{(1-w^2)^2}{w^3}\right) \\
&\qquad +\res_{w=1/w_0}  \left( R_1(w)\frac{(1-w^2)^2}{w^3} \right)=\frac{c_1^2c_2^2-1}{c_1^2c_2^2}f(x_0,y_0),
\end{align*}
completing the proof of \eqref{7.12}.

\subsection{Example 2: one-sided factorization}\label{ss7.2}
Let $\cL:\Rset_{2,2}\to\Rset$ be the functional with moment matrix
\begin{equation}\label{7.19}
\fM=\left[
\begin{array}{ccc}
 1 & -\frac{c_0}{6}-\frac{173 c_1}{2688} & \frac{13 c_0^2}{432}+\frac{731 c_1 c_0}{24192}+\frac{38509 c_1^2}{3096576}+\frac{1}{3} \\
 \frac{113}{448} & -\frac{173 c_0}{2688}-\frac{70429 c_1}{1204224} & \frac{731 c_0^2}{48384}+\frac{38509 c_1 c_0}{1548288}+\frac{85970699 c_1^2}{9710862336}+\frac{2}{21} \\
 \frac{59809}{200704} & -\frac{70429 c_0}{1204224}-\frac{19352717 c_1}{539492352} & \frac{38509 c_0^2}{3096576}+\frac{85970699 c_1 c_0}{4855431168}+\frac{33713900827 c_1^2}{4350466326528}+\frac{61}{588} \\
\end{array}
\right],
\end{equation}
where $c_0,c_1$ are free real parameters. 
To simplify the notation, we set 
$$d_0=\sqrt{\left(28 c_0+53 c_1\right){}^2+112896}\quad\text{ and }\quad d_1=\sqrt{\left(28 c_0+53 c_1\right){}^2+1382976}.$$

Using the standard basis $\cB_1=(1,y)$ of $\Rset_1[y]$ we compute the vector orthonormal polynomials and we obtain
\begin{subequations}\label{7.20}
\begin{align}
P_{0,1}(x,y)&= 
\left[
\begin{array}{c}
1\\
\frac{\sqrt{3}}{8d_0}(2688 y +448 c_0+173 c_1 )\\
\end{array}
\right],\label{7.20a}\\
P_{1,1}(x,y)&=\left[
\begin{array}{c}
 \frac{112 d_0^2x+30240(28 c_0+53 c_1)y+(28 c_0+53 c_1) (4249 c_0+449 c_1)-3189312}{4 \sqrt{15}\, d_0 d_1} \\
 \frac{896 d_1^2xy+45(9408(448 c_0+173 c_1)-5 c_1 d_0^2)x-32 (49 d_0^2+4 d_1^2)y-45 (5 c_0 d_0^2+2688(14 c_0-311 c_1))}{224 \sqrt{15}\, d_0 d_1}\\  
 \end{array}
\right],\label{7.20b}
\intertext{and therefore}
A_{1,1}&=\left[
\begin{array}{cc}
 \frac{\sqrt{15} d_1}{28 d_0} & 0 \\
 \frac{3 \sqrt{5} (28 c_0+53 c_1)}{14 d_1} & \frac{252 \sqrt{5}}{d_1} \\
\end{array}
\right].\label{7.20c}
\end{align}
\end{subequations}
In particular, these equations show that $\cL$ is a positive linear functional. 
Substituting the above formulas into the left-hand side of \eqref{4.8}, we see that \eqref{4.8} holds when $n=m=1$ with $q(x)=1$ and 
\begin{equation}\label{7.21}
p_1(y;x)=\frac{32 (53-28 x) y+225(c_1 x+c_0)}{224 \sqrt{15}}, \qquad p_0(y;x)=\frac{1073-448 x}{224 \sqrt{15}}.
\end{equation}
Equivalently, this means that \eqref{4.4} holds if we take $p(x,w)=w(p_1(y;x)-wp_0(y;x))$, or explicitly 
\begin{equation}\label{7.22}
p(x,w)=\frac{16 (53-28 x)+225(c_1 x+c_0)w-225w^2}{224 \sqrt{15}}.
\end{equation}
Note that for $x\in(-1,1)$, the product of the roots $w_1,w_2$ of the equation $p(x,w)=0$ is 
\begin{equation}\label{7.23}
w_1w_2=-\frac{16 (53-28 x)}{225}<-\frac{16}{9}.
\end{equation}
This means that for $x\in(-1,1)$, the roots are real, have opposite signs and $|w_1w_2|>1$, i.e. at most one of them is in $\oDset$. Since
\begin{equation}\label{7.24}
p(x,w)=-\frac{15\sqrt{15}}{224}(w-w_1)(w-w_2),
\end{equation}
it follows that condition (a) in \thref{th4.1}(II) holds if and only if $p(x,1)\geq 0$ and $p(x,-1)\geq 0$ for $x\in(-1,1)$. These conditions can be rewritten as 
$$|c_0+c_1x|\leq \frac{7(89-64x)}{225}\quad \text{for all }\quad  x\in(-1,1).$$
It is easy to see that the last condition holds if and only if the inequality is true at $x=\pm 1$. This shows that $p(x,w)\neq 0$ for $(x,w)\in(-1,1)^2$ if and only if the parameters $c_0$ and $c_1$ satisfy the following conditions:
\begin{equation}\label{7.25}
|c_0+c_1|\leq \frac{7}{9}\quad \text{ and }\quad |c_0-c_1|\leq \frac{119}{25}. 
\end{equation}
For (b), we compute $\hat{\Psi}^1_{1}(z)$ and we find
\begin{equation}\label{7.26}
\det (\hat{\Psi}^1_{1}(z)) = \frac{1}{105} \left(\frac{7}{2}-z\right)^3 \left(\frac{32}{7}-z\right).
\end{equation}
Therefore, $\hat{\Psi}^1_{1}(z)$ is invertible for all $z\in (-1,1)$. Summarizing, we see that  the conditions in  \thref{th4.1}(II) hold with $q(x)=1$ and $p(x,w)$ given in equation \eqref{7.22} if and only if the parameters $c_0,c_1$ satisfy \eqref{7.25}. 
In particular, this means that if equation~\eqref{7.25} holds, then $\cL$ extends to a positive linear functional on $\Rset[x,y]$ and the recurrence coefficients satisfy \eqref{4.5} with $n=m=1$. Note that if we have equality in one or both inequalities in \eqref{7.25}, then $p(x,w)$ will vanish at some of the points $(\pm 1,\pm 1)$ on the boundary of $(-1,1)\times\Dset$ and we obtain an example similar to the ones discussed in \reref{re4.4}.

If $M\geq 1$, we can use \eqref{4.19} to construct $N$ orthonormal elements in $\sPt_{N,M;\cL}[x,y]$ for every $N\geq 1$. For instance, when $M=1$ we can take the polynomial $p_1(y;x)$ in \eqref{7.21} and the polynomials $\{p_1(y;x)U_j(x)\}_{j=0}^{N-1}$ will form an orthonormal set in $\sPt_{N,1;\cL}[x,y]$, which can be completed to a basis of  $\sPt_{N,1;\cL}[x,y]$ by adding just one element. If $p(x,w)\neq 0$ for $(x,w)\neq [-1,1]\times\oDset$ we can construct this element using \thref{th6.2}. We illustrate this below in the case $M=1$. A direct computation shows that when $N=1$ the polynomial 
\begin{equation}\label{7.27}
\pt_{1,1}^{1}(x,y)=\frac{\sqrt{3}\left(18816 xy+7(448 c_0+173 c_1)x -2688 y+2(311 c_1-14 c_0)\right)}{28 d_0}
\end{equation}
is orthogonal to $\pt_{1,1}^{0}(x,y)=p_1(y;x)$ and has norm $1$. Therefore, $(\pt_{1,1}^{0}(x,y), \pt_{1,1}^{1}(x,y))$ is an orthonormal basis of $\sPt_{1,1;\cL}[x,y]$ with vector polynomial
$$\Pt_{1,1}(x,y)= 
\left[
\begin{array}{c}
\pt_{1,1}^{0}(x,y)\\
\pt_{1,1}^{1}(x,y)
\end{array}
\right],$$
corresponding to the basis $\cBt_1=(53 - 28x,x)$ of $\Rset_1[x]$ by \reref{re4.7}(ii). Note that $p(x,0)$ is not a constant, and thus the polynomials are not associated with the standard basis $(1,x)$ of $\Rset_1[x]$. 
If we define the orthogonal matrix
$$\tilde{R}_{1,1}=\frac{1}{d_1}\left[\begin{array}{cc}
504 \sqrt{5} & -d_0\\
d_0& 504 \sqrt{5}
\end{array}\right],$$
then $\tilde{R}_{1,1}^t\Pt_{1,1}(x,y)$ yields the orthonormal basis of $\sPt_{1,1;\cL}[x,y]$ with respect to the standard basis $(1,x)$ of $\Rset_1[x]$.

\subsubsection{Illustration of \thref{th6.2}}\label{ss7.2.1}
In this subsection we illustrate  \thref{th6.2} when $M=1$. For simplicity, we consider first the case
\begin{equation}\label{7.28}
c_1=c_2=0
\end{equation}
which will be sufficient to explain some of the subtle points very explicitly, and in particular, the need to modify the inner product on $\sPt^{1}_{1,3;\cL_{p_c}}[z,w]$. Then we apply the constructions for arbitrary parameters $c_1,c_2$ satisfying the strict inequalities in \eqref{7.25}, but we omit some of the details since the explicit formulas for the polynomials on the torus $\Tset^2$ become very involved. \\

\noindent
{\em \underline{Special case: $c_1=c_2=0$}.} Note that for $c_1=c_2=0$ the polynomial $p(x,w)$ in \eqref{7.22} reduces to 
$$p(x,w)=\frac{16 (53-28 x)-225w^2}{224 \sqrt{15}},$$
and therefore
$$p_c(z,w)=zp(x,w)=-\frac{16 (2 z-7) (7 z-2)+225zw^2}{224 \sqrt{15}}.$$
With the above polynomial, we define the positive linear functional 
$$\cL_{p_c}(f)=\frac{1}{(2\pi)^2}\iint_{\Tset^2}\, \frac{f(z,w)\, |dz|\,|dw|}{|p_c(z,w)|^2},$$
on $\Rset[z^{\pm 1},w^{\pm 1}]$. Note that here $\qt(z)=1$ and therefore $\pt(z,w)=p_c(z,w)$ in the notations of \thref{th6.2}. A straightforward computation shows that the space $\sPt^{1}_{1,3;\cL_{p_c}}[z,w]$ is spanned by the element 
\begin{equation}\label{7.29}
\tilde{\phi}(z,w)=\frac{\sqrt{3} w \left(64 w^2 (2 z-7)+(7 \sqrt{17}-33) z+32 (9-\sqrt{17})\right)}{896}.
\end{equation}
The polynomial $\tilde{\phi}(z,w)$ is normalized so that 
\begin{subequations}\label{7.30}
\begin{align}
\langle  \tilde{\phi}(z,w), \tilde{\phi}(z,w)\rangle_{\cL_{p_c}}&=\frac{3}{56} (23-\sqrt{17})\label{7.30a}\\
\langle w^{2} \tilde{\phi}(z,1/w), \tilde{\phi}(z,w)\rangle_{\cL_{p_c}}&=\frac{1}{56} (13-3 \sqrt{17}).\label{7.30b}
\end{align}
\end{subequations}
This means that for the inner product defined in \eqref{6.11} we have
\begin{align*}
\langle  \tilde{\phi}(z,w), \tilde{\phi}(z,w)\rangle =\langle  \tilde{\phi}(z,w), \tilde{\phi}(z,w)\rangle_{\cL_{p_c}}-\langle w^{2} \tilde{\phi}(z,1/w), \tilde{\phi}(z,w)\rangle_{\cL_{p_c}}=1.
\end{align*}
Note, in particular that the term in \eqref{7.30b} is nonzero, and therefore this term cannot be omitted. 
By \thref{th6.2}, for every $N\geq 1$, the element 
\begin{align*}
\pt_{N,1}^{N}(x,y)=\cS_{N,0}( \tilde{\phi}(z,w))&=\frac{\sqrt{3}}{2}\,\cS_{N,0}\left(w^3 \left(-1+\frac{2}{7}z\right)\right)=\sqrt{3} \left(U_N(x)-\frac{2}{7}U_{N-1}(x)\right)y,
\end{align*}
completes the set $(p_1(y;x),p_1(y;x)U_1(y),\dots,p_1(y;x)U_{N-1}(y))$ to an orthonormal basis of $\sPt_{N,1;\cL}[x,y]$. It is straightforward to check that for $N=1$ the above formula coincides with $\pt_{1,1}^{1}(x,y)$ in \eqref{7.27} when $c_1=c_2=0$.\\

\noindent
{\em \underline{General case.}} Suppose now that $c_0$ and $c_1$ are real constants, such that
\begin{equation}\label{7.31}
|c_0+c_1|< \frac{7}{9}\quad \text{ and }\quad |c_0-c_1|<\frac{119}{25}. 
\end{equation}
We set $\pt(z,w)=p_c(z,w)=zp(x,w)$ where $p(x,w)$ is the polynomial in \eqref{7.22} and we consider the positive linear functional 
$$\cL_{p_c}(f)=\frac{1}{(2\pi)^2}\iint_{\Tset^2}\, \frac{f(z,w)\, |dz|\,|dw|}{|p_c(z,w)|^2},$$
on $\Rset[z^{\pm 1},w^{\pm 1}]$. A straightforward computation shows that the space $\sPt^{1}_{1,3;\cL_{p_c}}[z,w]$ endowed with the inner product in \eqref{6.11} is spanned by a unit element of the form
\begin{align}
\tilde{\phi}(z,w)&=(\tilde{\phi}_{0,0}+\tilde{\phi}_{1,0}z)(1+w^2)+(\tilde{\phi}_{0,1}+\tilde{\phi}_{1,1}z)w+\tilde{\phi}_0(z,w)\nonumber\\
\intertext{where}
\tilde{\phi}_0(z,w)&=\frac{\sqrt{3}}{56d_0}\left(4(14 c_0-311 c_1)z-7(448 c_0+173 c_1)\right)w^2+\frac{24 \sqrt{3} (2 z-7)w^3}{d_0},\label{7.32}
\end{align}
with $\tilde{\phi}_{0,0},\tilde{\phi}_{1,0},\tilde{\phi}_{0,1},\tilde{\phi}_{1,1}\in\Rset$. Since $S_{w,0}(w)=S_{w,0}(1+w^2)=0$, we have
\begin{align}
&\pt_{N,1}^N(x,y)=\cS_{N,0}( \tilde{\phi}(z,w))=\cS_{N,0}( \tilde{\phi}_0(z,w))=\frac{\sqrt{3}}{56d_0}\Big( 7(448 c_0+173 c_1)U_{N}(x) \nonumber\\
&\qquad-4(14 c_0-311 c_1)U_{N-1}(x)+2688 (7U_{N}(x)-2U_{N-1}(x))y  \Big).\label{7.33}
\end{align}
It is easy to see that when $N=1$ the above formula  agrees with \eqref{7.27}.
Recall that $p_1(y;x)$ is given in \eqref{7.21}. With these formulas, \thref{th6.2} tells us that if  $N\geq 1$ and we consider the basis 
 $$\cBt_N=(53 - 28x,(53 - 28x)x,(53 - 28x)x^2,\dots,(53 - 28x)x^{N-1},x^N)\text{ of } \Rset_N[x],$$
then 
$$\Pt_{N,1}(x,y)= 
\left[
p_1(y;x), \, p_1(y;x)U_1(x), \, \cdots, \, 
p_1(y;x)U_{N-1}(x), \, \pt_{N,1}^N(x,y)
\right]^t.$$

\subsubsection{Extension}\label{ss7.2.2} As in the previous example, there are extensions to regions where $p(x,w)$ can vanish when $(x,w)\in(-1,1)^2$. We will consider one such extension, 
\begin{equation}\label{7.34}
c_0+c_1\geq  \frac{7}{9}\quad \text{ and }\quad c_0-c_1\geq \frac{119}{25},
\end{equation}
and we will assume that at least one of these inequalities is strict, so that \eqref{7.25} does not hold. For fixed $x\in\Rset$, we denote by $w_1=w_1(x)$ and $w_2=w_2(x)$ the roots of $p(x,w)=0$
\begin{align*}
w_1&=\frac{1}{2}\left(c_0 +c_1 x -\sqrt{\frac{64}{225}(53 - 28 x) + (c_0 + c_1 x)^2}\right)\\
w_2&=\frac{1}{2}\left(c_0 +c_1 x +\sqrt{\frac{64}{225} (53 - 28 x) + (c_0 + c_1 x)^2}\right),\\
\intertext{and we set }
y_1(x)&=\frac{1}{2}\left(w_1(x)+\frac{1}{w_1(x)}\right).
\end{align*}
With these notations, we show below that if \eqref{7.34} holds, then
\begin{equation}\label{7.35}
-1<w_1(x)<0<w_2(x)\qquad \text{ for }\quad x\in(-1,1),
\end{equation}
and
\begin{align}
\cL(f)&=\frac{4}{\pi^2}\iint_{(-1,1)^2}\frac{f(x,y)\,\sqrt{1-x^2}\sqrt{1-y^2}}{p(x,w)p(x,1/w)}dx\,dy\nonumber\\
&\quad+\frac{6272}{15\pi}\int_{-1}^1 \frac{f(x,y_1(x))(1-w_1(x)^2)w_2(x)\,\sqrt{1-x^2}}
{(53-28x)(1-w_1(x)w_2(x))(w_2(x)-w_1(x))}\,dx \label{7.36}
\end{align}
extends  $\cL$ from $\Rset_{2,2}[x,y]$ with moment matrix in \eqref{7.19} to a positive linear functional on $\Rset[x,y]$. Note that $y_1(x)$ in the last line of the above equation depends on $x$ unlike the formulas in the previous example.

Since $c_0+c_1x>0$ for $x\in(-1,1)$, it follows that $w_1<0<w_2$ when $x\in(-1,1)$.  Furthermore $|w_1w_2|>1$ for all real $c_0, c_1$ when $x\in(-1,1)$ by \eqref{7.23}, so that if one root is in magnitude less than one for all $x\in(-1,1)$ the other must be in magnitude greater than one. We look to see if $-1<w_1$ for $x\in(-1,1)$. 
This leads to the inequality
$$\frac{623-448x}{225}<c_0+c_1x.$$
This will be true if and only if $\frac{623-448x}{225}\leq c_0+c_1x$ for $x=\pm 1$ and at least one of these inequalities is strict. It is straightforward  to see that these two inequalities coincide with the ones in \eqref{7.34}, thus completing the proof of \eqref{7.35}.

For arbitrary $c_0,c_1\in\Rset$ and with $p(x,w)$ in \eqref{7.22} we define the functional $\cL_0:\Rset[x,y]\to \Rset$ by 
\begin{equation*}
\cL_0(f)=\frac{4}{\pi^2}\iint_{(-1,1)^2}f(x,y)\frac{\sqrt{1-x^2}\sqrt{1-y^2}}{p(x,w)p(x,1/w)}\,dx\,dy=\frac{2}{\pi}\int_{-1}^{1}\cI(f,x)\sqrt{1-x^2} dx 
\end{equation*}
where
\begin{align*}
\cI(f,x)&=\frac{1}{\pi}\int_{-\pi}^{\pi}\frac{f(x,\cos\phi)\sin^2\phi}{p(x,w)p(x,1/w)}\,d\phi \\
&=\frac{784}{15(53-28x) \pi i}\oint_{\Tset}\frac{f(x,\frac{1}{2}(w+\frac{1}{w}))(w-1/w)^2w}{\tau(x,w)}dw, 
\end{align*}
and
$$\tau(x,w)=(w-w_1)(w - w_2)(w - 1/w_1)(w -1/w_2).$$
In the region where equation~\eqref{7.25} holds $\cL=\cL_0$ and the residues of the above contour integral are at $w=0$, $w=1/w_1$, and $w=1/w_2$.  Hence
$$\cI(f,x)=\frac{784}{15(53-28x)}(R_0(x)+R_1(x)+R_2(x))$$
where 
\begin{align*}
R_0(x)&=2\text{res}_{w=0}\frac{f(x,\frac{1}{2}(w+\frac{1}{w})(w-1/w)^2w}{\tau(z,w)},\quad 
R_1(x)=2\frac{f(x,\frac{1}{2}(w_1+\frac{1}{w_1}))(1-w^2_1)}{(1-w_1w_2)(1-\frac{w_1}{w_2})},\\
R_2(x)&=2\frac{f(x,\frac{1}{2}(w_2+\frac{1}{w_2}))(1-w^2_2)}{(1 -w_1w_2)(1-\frac{w_2}{w_1})}.
\end{align*}

We now consider the  region where \eqref{7.34} holds.
In this case $|w_1|<1<|w_2|$ and the residue at $w=1/w_1$ is replaced by the residue at $w=w_1$ which gives
\begin{equation*}
\hat R_1(x)=-2\frac{f(x,\frac{1}{2}(w_1+\frac{1}{w_1}))(1-w^2_1)}{(1-w_1w_2)(1-\frac{w_1}{w_2})}=-R_1(x).
\end{equation*}
Therefore
$$\cL(f)-\cL_0(f)=\frac{6272}{15\pi}\int_{-1}^1\frac{f(x,y_1(x))(1-w_1(x)^2)\sqrt{1-x^2}}{(53-28x)(1-w_1(x)w_2(x))(1-\frac{w_1(x)}{w_2(x})}dx,$$
establishing \eqref{7.36}.

\section*{Acknowledgements}
The authors thank an anonymous referee for thoughtful suggestions to improve the presentation of the paper.

\end{document}